\newtheorem{theorem}{Théorème}[section]
\newtheorem{corollary}[theorem]{Corollaire}
\newtheorem{lemma}[theorem]{Lemme}
\newtheorem{proposition}[theorem]{Proposition}
\newtheorem{question}[theorem]{Question}
\theoremstyle{remark}
\theoremstyle{definition}
\newtheorem{definition}[theorem]{Definition}
\theoremstyle{plain}
\theoremstyle{definition}
\numberwithin{equation}{subsection}
\newcommand{\Ker}{\operatorname{Ker}}
\def\Z{\mathbb{Z}}
\def\Q{\mathbb{Q}}
\def\C{\mathbb{C}}
\newcommand{\tor}{{tors}}
\newcommand{\Ok}{{\mathcal O}_k}
\newcommand{\Spec}{{\rm Spec\, }}
\newcommand{\coker}{{\rm coker\,}}
\newcommand{\Pic}{{\rm Pic}}
\newcommand{\NS}{{\rm NS}}
\newcommand{\ra}{\rightarrow}
\newcommand{\lra}{\longrightarrow}
\title{Finitude uniforme pour les cycles de codimension 2 sur les corps de nombres}
\author{Fran\c{c}ois Charles et Alena Pirutka}
\address{\hspace{-0.5cm} Fran\c{c}ois Charles, 
DMA,\newline
\'Ecole Normale Sup\'erieure de Paris,\newline 
Paris, France\newline
Laboratoire de Math\'ematiques d'Orsay, \newline
Universit\'e Paris-Saclay, 
Orsay, France\newline 
\textit{francois.charles@ens.fr}\newline
\newline
\hspace{-0.5cm} Alena Pirutka, 
Courant Institute of Mathematical Sciences, \newline
New York University, 
New York, U.S.A.\newline
\textit{pirutka@cims.nyu.edu}}
\begin{document}

\maketitle

\begin{abstract}

\vspace{-1cm}

Soit $X$ une vari\'et\'e projective et lisse, d\'efinie sur un corps de nombres. Sous l'hypoth\`ese $H^2(X,\mathcal O_X)=0,$ Colliot-Th\'el\`ene et Raskind ont d\'emontr\'e que le sous-groupe de torsion $CH^2(X)_{tors}$ du groupe de Chow en codimension $2$ est fini. Dans cette note, on donne des bornes uniformes pour le groupe fini $CH^2(X)_{tors}$ quand $X$ varie en famille. 

\

\noindent \textsc{Abstract.} Let $X$ be a smooth projective variety defined over a number field. Assuming $H^2(X,\mathcal O_X)=0,$ Colliot-Th\'el\`ene and Raskind proved that the torsion subgroup $CH^2(X)_{tors}$ in the Chow group of cycles of codimension $2$ is finite. In this note, we give uniform bounds for the finite group $CH^2(X)_{tors}$ when $X$ varies in a family.

\end{abstract}

\section{Introduction}

\subsection{Contexte}

Soit $k$ un corps de nombres. Si $E$ est une courbe elliptique sur $k$, le théorème de Mordell-Weil garantit que le sous-groupe $E(k)_{tors}$ des points de torsion de $E$ est fini. D'importants résultats permettent d'exhiber des bornes pour la taille de ce groupe de torsion. Si $k=\mathbb Q$, un th\'eor\`eme de B. Mazur \cite{Mazur} donne une liste finie de toutes les possibilit\'es pour le groupe $E(\mathbb Q)_{tors}.$ les travaux de S. Kamienny, M.A. Kenku, et F. Momose \cite{Ka92,KM88} étendent le théorème de Mazur au cas où $k$ est un corps quadratique. Dans le cas g\'en\'eral, un th\'eor\`eme de L. Merel \cite{Merel} donne une borne uniforme $B(d)$ qui ne d\'epend que du degr\'e $d$ de $k$ sur $\mathbb Q$, telle que $|E(k)_{tors}|\leq B(d)$.

Soit maintenant $C$ une courbe projective lisse sur $k$ de genre $g\geq 2$; l'ensemble $C(k)$ est fini d'apr\`es la conjecture de Mordell, démontrée par Faltings. Des travaux récents de V. Dimitrov, Z. Gao, et Ph. Habegger \cite{DGH} permettent de donner une borne uniforme pour le cardinal $|C(k)|$ qui ne d\'epend que des invariants $g,d,$ et du rang du groupe des points rationnels de la Jacobienne de $C$.

En dimension sup\'erieure, soit $A$ une vari\'et\'e ab\'elienne sur $k$ de dimension au moins $1$. Le groupe de torsion $A(k)_{tors}$ est fini, et on conjecture l'existence d'une borne uniforme pour la taille de ce groupe, qui ne d\'epend que de la dimension de $A$ et du degr\'e $d$ de $k$ sur $\mathbb Q$. Cette conjecture est largement ouverte en g\'en\'erale. Un th\'eor\`eme d'A. Cadoret et A. Tamagawa \cite{CTU,CT} donne un r\'esultat partiel dans cette direction pour la torsion $\ell$-primaire dans les familles \`a un param\`etre: si $S$ est une courbe sur $k$, $\mathcal A\to S$ est une famille de vari\'et\'es ab\'eliennes sur $S$, et $\ell$ est nombre premier, alors pour tout $s\in S(k)$ le sous-groupe de torsion $\ell$-primaire  $\mathcal A_s(k)\{\ell\}$ est d'ordre born\'e par une constante qui ne d\'epend que de $\mathcal A,\,d$ et de $\ell$, mais pas de choix du point $s$.  Pour obtenir l'uniformit\'e de tout le groupe de torsion  dans ce cas, il resterait donc \`a borner l'exposant du groupe $\mathcal A_s(k)_{tors}$: on ne sait pas le faire en g\'en\'eral, m\^eme pour une famille sur une courbe.

Le groupe $A(k)_{tors}$ s'identifie au groupe de Picard de la variété abélienne duale de $A$. Le but de cet article est d'exposer quelques résultats de finitude uniforme pour les cycles de codimension supérieure. Comme indiqué plus haut, ces questions d'uniformité apparaissent sous plusieurs formes dans la littérature, voir par exemple \cite[VI.45.7.3]{Ma86}

Soit $X$ une vari\'et\'e projective lisse sur $k$. Soit $i>0$ et soit $CH^i(X)$ le groupe de Chow des cycles de codimension $i$ sur $X$. Une version forte de la conjecture de Bass pr\'edit que le groupe $CH^i(X)$ est de type fini; son sous-groupe de torsion $CH^i(X)_{tors}$ serait donc un groupe fini. J.-L. Colliot-Th\'el\`ene, W. Raskind \cite{CTR}, et P. Salberger \cite{S91} ont \'etabli la conjecture de finitude de la torsion pour les cycles de codimension $2$ dans le cas suivant:
\begin{theorem} (\cite{CTR}, \cite{S91})\label{theoremeCTR}
Soit $X$ une vari\'et\'e projective  lisse g\'eom\'etriquement int\`egre sur un corps de nombres. Supposons que $H^2(X,\mathcal O_X)=0$. Alors le groupe $CH^2(X)_{tors}$ est fini.
\end{theorem}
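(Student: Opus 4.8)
Le plan est de ramener l'étude de la torsion de $CH^2(X)$ à des calculs de cohomologie étale, puis de descendre du corps $\kb$ au corps $k$ au moyen de la finitude de la cohomologie galoisienne des corps de nombres et d'arguments de poids; on établira d'abord la finitude de la composante $\ell$-primaire $CH^2(X)\{\ell\}$ pour chaque premier $\ell$, avant de montrer qu'elle est nulle pour presque tout $\ell$. La formule de Bloch--Quillen identifie $CH^2(X)$ au groupe $H^2_{\mathrm{Zar}}(X,\mathcal{K}_2)$, où $\mathcal{K}_2$ est le faisceau de Zariski associé à $U\mapsto K_2(U)$, et le théorème de Merkurjev--Suslin fournit, pour $n=\ell^m$, un isomorphisme de faisceaux $\mathcal{K}_2/n\xrightarrow{\sim}\mathcal{H}^2(\mu_n^{\otimes 2})$, où $\mathcal{H}^q(\mu_n^{\otimes 2})$ désigne le faisceau de Zariski associé à $U\mapsto H^q_{\mathrm{et}}(U,\mu_n^{\otimes 2})$. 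La théorie de Bloch--Ogus donne la résolution de Gersten de ces faisceaux et la suite spectrale de coniveau
\[
E_2^{p,q}=H^p_{\mathrm{Zar}}(X,\mathcal{H}^q(\mu_n^{\otimes 2}))\Longrightarrow H^{p+q}_{\mathrm{et}}(X,\mu_n^{\otimes 2}),\qquad E_2^{p,q}=0 \text{ pour } p>q.
\]
On en déduit une injection $CH^2(X)/n\hookrightarrow H^2_{\mathrm{Zar}}(X,\mathcal{H}^2(\mu_n^{\otimes 2}))=E_2^{2,2}$ et, via l'application classe de cycle et l'image du terme de coniveau $2$, une description de la torsion $\ell$-primaire de $CH^2$ en termes de $H^3_{\mathrm{et}}(\,\cdot\,,\mathbb Q_\ell/\mathbb Z_\ell(2))$ et $H^4_{\mathrm{et}}(\,\cdot\,,\mathbb Q_\ell/\mathbb Z_\ell(2))$, pour $X$ comme pour $\bar X=X_{\kb}$.

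L'hypothèse $H^2(X,\mathcal O_X)=0$ intervient géométriquement comme suit. Par la théorie de Hodge en caractéristique nulle, elle équivaut à $h^{0,2}=h^{2,0}=0$, de sorte que $H^2(X_{\C},\Q)$ est entièrement de type $(1,1)$; le théorème de Lefschetz sur les classes $(1,1)$ donne alors $\NS(\bar X)\otimes\Q\xrightarrow{\sim}H^2(X_{\C},\Q)$, puis $\NS(\bar X)\otimes\mathbb Q_\ell\xrightarrow{\sim}H^2_{\mathrm{et}}(\bar X,\mathbb Q_\ell(1))$. Autrement dit $H^2$ est entièrement algébrique, sans partie transcendante. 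C'est l'ingrédient crucial: il contrôle la torsion géométrique $CH^2(\bar X)\{\ell\}$ et assure que les contributions de coniveau $1$ (faisant intervenir $H^2$) demeurent finies après descente; sans lui, une partie transcendante de $H^2$ produirait, via $H^1(k,-)$, un groupe divisible infini.

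La descente s'effectue par la suite spectrale de Hochschild--Serre
\[
H^p(k,H^q_{\mathrm{et}}(\bar X,\mathbb Q_\ell/\mathbb Z_\ell(2)))\Longrightarrow H^{p+q}_{\mathrm{et}}(X,\mathbb Q_\ell/\mathbb Z_\ell(2)),
\]
qui ramène la torsion arithmétique de $CH^2(X)$ au groupe $H^1(k,H^3_{\mathrm{et}}(\bar X,\mathbb Q_\ell/\mathbb Z_\ell(2)))$. Deux faits de finitude concluent: d'une part, la cohomologie galoisienne d'un corps de nombres à coefficients dans un $\mathbb Z_\ell$-module galoisien non ramifié hors d'un ensemble fini de places est de type fini sur $\mathbb Z_\ell$, donc à torsion finie; d'autre part, par la pureté de Deligne, le module $H^3_{\mathrm{et}}(\bar X,\mathbb Q_\ell(2))$ est pur de poids $3-4=-1$, donc sans invariants sous $\mathrm{Gal}(\kb/k)$, ce qui élimine la partie divisible parasite. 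On obtient ainsi la finitude de $CH^2(X)\{\ell\}$ pour chaque $\ell$.

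Pour la finitude de la torsion totale il reste à voir que $CH^2(X)\{\ell\}=0$ pour presque tout $\ell$: on fixe un modèle propre et lisse $\mathcal X\to\Spec\Ok[1/N]$ et, pour $\ell\nmid N$, on vérifie que les complexes de Gersten pertinents sont sans $\ell$-torsion et que la cohomologie étale entière est sans torsion, d'où l'annulation. Le cœur technique est l'entrelacement de la filtration par le coniveau, qui isole l'image des cycles, avec la filtration de Hochschild--Serre, combiné à l'argument de poids; c'est précisément là que l'hypothèse $H^2(X,\mathcal O_X)=0$ est indispensable. Obtenir simultanément la finitude pour chaque $\ell$ et l'annulation uniforme pour $\ell$ grand constitue l'obstacle principal.
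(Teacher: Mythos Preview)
Ton plan rassemble les bons ingrédients (Merkurjev--Suslin, Bloch--Ogus, Hochschild--Serre, poids de Deligne), mais la descente galoisienne est mal identifiée et l'argument de finitude proposé est faux. D'abord une erreur d'indice~: la suite spectrale de Hochschild--Serre pour $H^3_{\acute{e}t}(X,\Q_\ell/\Z_\ell(2))$ a pour gradués, en dehors du terme $H^0(k,H^3(\bar X,-))$ (la partie géométrique), les groupes $H^1(k,H^2(\bar X,-))$, $H^2(k,H^1(\bar X,-))$ et $H^3(k,-)$ --- jamais $H^1(k,H^3(\bar X,-))$. Ensuite, et c'est le point décisif, ces groupes ne sont \emph{pas} finis. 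Même sous l'hypothèse $H^2(X,\cO_X)=0$, qui donne $H^2(\bar X,\Q_\ell/\Z_\ell(2))\simeq \NS(\bar X)\otimes\Q_\ell/\Z_\ell(1)$ à un groupe fini près, on a $H^1(k,(\Q_\ell/\Z_\ell(1))^\rho)\simeq (k^\ast\otimes\Q_\ell/\Z_\ell)^\rho$, qui est infini. Ton assertion selon laquelle la cohomologie galoisienne est \og de type fini sur $\Z_\ell$\fg{} vaut pour des coefficients dans un $\Z_\ell$-module de type fini, pas pour des modules divisibles de type $\Q_\ell/\Z_\ell$.

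La preuve (celle de Colliot-Thélène--Raskind, reprise en version effective dans l'article) ne montre donc pas la finitude de ces groupes ambiants, mais exploite la suite exacte
\[
0\lra H^1(X,\cK_2)\otimes\Q/\Z \lra NH^3_{\acute{e}t}(X,\Q/\Z(2))\lra CH^2(X)_{\tor}\lra 0,
\]
et prouve que l'image de $H^1(X,\cK_2)\otimes\Q/\Z$ dans $H^1(k,H^2(\bar X,\Q/\Z(2)))$ (via la flèche $\phi$ induite par Hochschild--Serre) a un conoyau d'exposant fini~: c'est là que l'hypothèse $H^2(X,\cO_X)=0$ intervient réellement, puisqu'elle assure que $\Pic(\bar X)\otimes\Q/\Z\to H^2(\bar X,\Q/\Z(1))$ a conoyau fini, et que le cup-produit $\Pic(X)\otimes k^\ast\to H^1(X,\cK_2)$ recouvre la partie divisible infinie. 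Il reste alors la contribution de $H^2(k,H^1(\bar X,\Q/\Z(2)))$, qui n'est pas non plus finie en général et qui est contrôlée par restriction à une courbe $C\subset X$ section hyperplane (l'argument passe par $\Pic^0_{X/k}\hookrightarrow\Pic^0_{C/k}$ et la complète réductibilité de Poincaré). Ton plan ignore entièrement ce terme et le rôle de la courbe auxiliaire. Enfin, l'argument \og pour presque tout $\ell$\fg{} que tu esquisses (absence de torsion dans les complexes de Gersten et dans la cohomologie entière) ne suffit pas~: ce qui borne l'exposant global est la conjonction de la borne explicite sur le conoyau de $\phi$ et de la finitude de $\bigoplus_\ell H^3(\bar X,\Q_\ell/\Z_\ell(2))^{G_k}$, obtenue par réduction à un corps fini.
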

En dehors de ce r\'esultat, on ne connait pas d'autres cas g\'en\'eraux o\`u l'on sait \'etablir la finitude des groupes $CH^i(X)_{tors}$ pour les cycles de codimension $i\geq 2$ (voir cependant \cite{L1},\cite{L-R},\cite{L-S},\cite{O1} pour des cas particuliers).

\subsection{L'exemple des surfaces de Châtelet}\label{exempleChatelet}

On donne un exemple o\`u le calcul explicite de la torsion dans $CH^2$ montre que l'on ne peut pas esp\'erer borner uniformément la torsion sans imposer de condition suppl\'ementaire (voir \cite[Corollaire 2 p. 207 et Proposition 13 p. 203]{CTS77} pour un exemple semblable dans le cas des compactifications lisses de tores algébriques, et voir aussi \cite[Theorem 8.13, Remark 8.13.1, Remark 8.13.2]{CTS87} et \cite{Siksek12} pour des résultats négatifs analogues portant sur des questions de points sur les surfaces cubiques). 

Soit $d\in \mathbb Q$ sans facteur carré et soit $\pi:\mathcal X\to \mathcal S$ la famille projective et lisse de surfaces de Ch\^atelet donn\'ee par l'\'equation 
\begin{equation}\label{chatelet}
y^2-dz^2=x(x-t_1)(x-t_2),
\end{equation} sur un ouvert affine. Ici  $t_1, t_2$ sont des coordonn\'ees de $\mathbb A^2_{\mathbb Z}$ et $\mathcal S\subset \mathbb A^2_{\mathbb Z}$ est l'ouvert de lissit\'e de la famille (\ref{chatelet}). 
Notons que l'hypoth\`ese  $H^2(\mathcal X_s,\mathcal O_{\mathcal X_s})=0$ est satisfaite car les fibres de $\pi$ sont des vari\'et\'es rationnellement connexes.

On prend $k=\mathbb Q$, $s\in \mathcal S(k)$ et on pose $X=\mathcal X_s$.  Alors on sait calculer $CH^2(X)_{tors}$. En effet, 
d'apr\`es \cite[Thm.8.13]{CTS87}, \cite{D00}, \cite{D06} le groupe $CH^2(X)_{tors}$ co\"incide avec le groupe $A_0(X)$ des z\'ero-cycles sur $X$ de degr\'e $0$.  D'apr\`es une conjecture de J.-L. Colliot-Th\'el\`ene et J.-J. Sansuc (\cite[Conjecture A]{CTS81}, \cite{CTS87}, \cite{HW}) pour $X$, on a une suite exacte (voir \cite[IV, p.231]{CTS80}  pour l'injectivit\'e de la premi\`ere fl\`eche)
\begin{equation*}
0\to A_0(X)\to \oplus_{v} A_0(X_{k_v})\to H^1(k, \hat S)
\end{equation*}
o\`u la somme de milieu est une somme sur toutes les places $v$ de $k$ et le groupe de droite s'identifie \`a $(\mathbb Z/2)^3$. 
Les groupes $A_0(X_{k_v})$ sont calcul\'es dans \cite{D00}. Ici on a besoin du cas particulier suivant: on a que $A_0(X_{k_v})\simeq (\mathbb Z/2)^2$ sous conditions
que  l'extension $k_v(\sqrt d)/k_v$ est non-ramifi\'ee, et que
$v(t_1)=v(t_2)$ est un entier positif impair.

Ainsi, si  $s=(n_1, n_2)\in \mathcal S(\mathbb Q)$, o\`u $n_1, n_2$ sont deux entiers impairs sans facteur carr\'e, premiers \`a $d$, avec $m$  diviseurs premiers communs, on a $|CH^2(\mathcal X_{s})_{tors}|\geq 2^{2m-3}$. 

\subsection{\'Enonc\'e des r\'esultats}

On se place  dans le cadre suivant. Soit $\mathcal S$ un sch\'ema g\'eom\'etriquement int\`egre de type fini sur Spec$\,\mathbb Z$, soit $S$ la fibre g\'en\'erique de $\mathcal S$ sur $\mathbb Q$,  soit $\pi:\mathcal X\to \mathcal S$ un morphisme projectif et lisse, et soit $\bar{\eta}$ le point g\'en\'erique g\'eom\'etrique de $\mathcal S$. On suppose  $H^2(\mathcal X_s,\mathcal O_{\mathcal X_s})=0$ pour tout point $s\in \mathcal S$. Notons que si le corps r\'esiduel $k$ de $s$ est de caract\'eristique z\'ero, alors cette condition est impliqu\'ee par la condition   $b_2(\mathcal X_
{\bar \eta})=\rho(\mathcal X_{\bar \eta}).$  

On s'int\'eresse  \`a trouver des bornes uniformes pour les groupes finis $CH^2(\mathcal X_s)_{tors}$ qui d\'ependent de $\pi$, de degr\'e de $k$ sur $\mathbb Q$, mais pas de choix du point $s$.  En effet, les m\'ethodes d\'ev\'elopp\'ees dans \cite{CTR} permettent de relier le groupe de Chow des cycles de codimension $2$ avec des invariants de nature cohomologique (cohomologie \'etale, cohomologie des faisceaux $\mathcal K_1$ et $\mathcal K_2$ venant de la $K$-th\'eorie, et autres): on peut donc esp\'erer contr\^oler ces invariants en famille.

Soient $\bar k$ une cl\^oture alg\'ebrique de $k$ et  $\bar s$ le $\bar k$-point de $\mathcal S$ correspondant. Soit $G_k=Gal(\bar k/k)$ le groupe de Galois absolu de $k$. La donn\'ee de la famille arithm\'etique sur $\mathcal S$ permet naturellement de consid\'erer les groupes suivants : 
\begin{enumerate}
\item l'image $\mathrm{Im}[CH^2(\mathcal X_s)_{tors}\to CH^2(\mathcal X_{\bar s})^{G_k}]$;
\item le noyau $\Ker[CH^2(\mathcal X_s)_{tors}\to CH^2(\mathcal X_{\bar s})^{G_k}]$;
\item l'image $\mathrm{Im}[CH^2(\mathcal X_U)_{tors}\to CH^2(\mathcal X_{s})]$ si 
$s : \Spec k\lra \mathcal S$
s'étend en un morphisme $U\lra \mathcal S$ o\`u  $\Ok$ est l'anneau des entiers de $k$, et o\`u $U\subset \Spec\Ok$ est un ouvert.
\end{enumerate}

Nous appellerons le premier de ces groupes la \emph{partie géométrique} de $CH^2(X)_{tors}$. Le second est la \emph{partie arithmétique}.

On \'etudie la premiere partie g\'eom\'etrique dans la section \ref{sectiongeo} (voir Th\'eor\`eme \ref{theorem:exponentgeo} et Proposition \ref{geomgen}) -- comme il est habituel, c'est le théorème de Merkurjev-Suslin qui en permet l'étude cohomologique. Quand la base $S$ est une courbe, les m\'ethodes d'A. Cadoret et A. Tamagawa \cite{CTU, CT} permettent d'obtenir des r\'esultats uniformes pour la partie $\ell$-primaire. Dans la section \ref{sectionarith} on utilise les arguments de J.-L. Colliot-Th\'el\`ene et W. Raskind, et P. Salberger pour trouver des bornes uniformes pour l'exposant du deuxième groupe ci-dessus (voir Th\'eor\`eme \ref{exposant2-k} et corollaire \ref{exposant2}).  En particulier, on obtient :

\begin{theorem}\label{theorem:exposant}
Soit $k$ un corps de nombres, et soit $S$ une courbe quasi-projective, g\'eom\'etriquement int\`egre  sur $k$.  Soit 
$$\pi : \mathcal X\lra S$$ 
un morphisme projectif lisse \`a fibres g\'eom\'etriquement int\`egres. Supposons  que  $H^2(\mathcal X_s,\mathcal O_{\mathcal X_s})=0$ pour tout point $s\in S$. Soit $\ell$ un nombre premier. Pour tout entier strictement positif $d$, il existe un entier $N=N(\pi, \ell, d)$ qui ne d\'epend que de $\pi,\ell$, et $d$,  tel que, pour toute extension finie $K$ de $k$ de degré au plus $d$, et tout $K$-point $s$ de $S$, on a:
$$\ell^N CH^2(\mathcal X_s)\{\ell\}=0.$$
\end{theorem}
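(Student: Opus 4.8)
Le plan est de décomposer la torsion $\ell$-primaire $CH^2(\mathcal{X}_s)\{\ell\}$ en sa partie géométrique et sa partie arithmétique, de borner uniformément l'exposant de chacune, puis de recombiner. Précisément, pour une extension finie $K/k$ de degré $\leq d$ et un $K$-point $s \in S(K)$, notons $\bar s$ le $\bar K$-point associé et $G_K = \mathrm{Gal}(\bar K/K)$. La restriction aux cycles géométriques fournit, sur la torsion $\ell$-primaire, une suite exacte
\begin{equation*}
0 \to A_s \to CH^2(\mathcal{X}_s)\{\ell\} \to B_s \to 0,
\end{equation*}
où $B_s = \mathrm{Im}[CH^2(\mathcal{X}_s)\{\ell\} \to (CH^2(\mathcal{X}_{\bar s})\{\ell\})^{G_K}]$ est la partie géométrique et $A_s = \Ker[\cdots]$ la partie arithmétique. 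Comme, pour une suite exacte courte de groupes de torsion, l'exposant du terme médian divise le produit des exposants des termes extrêmes, il suffira de majorer les exposants de $A_s$ et de $B_s$ par une puissance de $\ell$ ne dépendant que de $\pi,\ell,d$.

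Pour la partie géométrique, je commencerais par observer que, $\pi$ étant projectif lisse et $S$ étant une courbe, le faisceau $R^3\pi_*\Z_\ell(2)$ est un $\Z_\ell$-faisceau lisse sur $S$. Via le théorème de Merkurjev--Suslin et la suite exacte de coniveau (comme dans \cite{CTR}), et grâce à l'hypothèse $H^2(\mathcal{X}_s,\mathcal{O}_{\mathcal{X}_s})=0$, les résultats de la section \ref{sectiongeo} (Théorème \ref{theorem:exponentgeo}) fournissent un $\Z_\ell$-faisceau lisse $\mathcal{F}$ sur $S$, construit à partir de $R^3\pi_*\Z_\ell(2)$, tel que $B_s$ soit contenu dans $(\mathcal{F}_{\bar s}\otimes\Q_\ell/\Z_\ell)^{G_K}$. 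L'étape cruciale sera alors d'appliquer le théorème d'uniformité de Cadoret--Tamagawa \cite{CTU, CT} au $\Z_\ell$-faisceau lisse $\mathcal{F}$ sur la courbe $S$: il fournit une constante $C(\pi,\ell,d)$ majorant $|(\mathcal{F}_{\bar s}\otimes\Q_\ell/\Z_\ell)^{G_K}|$ pour tous les $s, K$ considérés, d'où une borne $\ell^{N_1}$ sur l'exposant de $B_s$.

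Pour la partie arithmétique, je m'appuierais sur le Théorème \ref{exposant2-k} et le corollaire \ref{exposant2}, qui bornent directement l'exposant de $A_s$. Le principe, issu des arguments de Colliot-Thélène--Raskind et de Salberger, est de relier $A_s$ à des groupes de cohomologie galoisienne $H^1(K,M)$, où $M$ provient de $H^2(\mathcal{X}_{\bar s},\Z_\ell(1))$ et de $H^3(\mathcal{X}_{\bar s},\Z_\ell(2))$; l'hypothèse $H^2(\mathcal{X}_s,\mathcal{O}_{\mathcal{X}_s})=0$ assure qu'à torsion bornée près $H^2(\mathcal{X}_{\bar s},\Z_\ell(1))$ est le groupe de Néron--Severi, ce qui permet de contrôler uniformément l'action galoisienne et la ramification de ces modules sur toute la famille, après étalement de $\pi$ sur un ouvert de $\Spec\Z$. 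On obtient ainsi une borne $\ell^{N_2}$ sur l'exposant de $A_s$, et $N = N_1 + N_2$ convient.

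La difficulté principale sera la borne uniforme sur la partie géométrique: pour une fibre isolée, la finitude de $CH^2(\mathcal{X}_s)_{tors}$ est déjà garantie par le Théorème \ref{theoremeCTR}, mais obtenir des bornes \emph{indépendantes} de $s$ et de $K$ requiert le théorème de Cadoret--Tamagawa, énoncé valable en famille à un seul paramètre --- c'est précisément ce qui impose l'hypothèse que $S$ est une courbe. Un point délicat à vérifier sera que le sous-quotient par lequel $B_s$ est décrit provienne bien d'un \emph{vrai} $\Z_\ell$-système local sur $S$ (et non d'un simple quotient galoisien fibre par fibre), afin que l'énoncé de Cadoret--Tamagawa s'applique tel quel; c'est le rôle des résultats préparatoires de la section \ref{sectiongeo}.
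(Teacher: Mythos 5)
Your proposal is correct and takes essentially the same approach as the paper's own proof: decompose $CH^2(\mathcal X_s)\{\ell\}$ via the kernel and the image of $CH^2(\mathcal X_s)\to CH^2(\mathcal X_{\bar s})$, bound the kernel's exponent by Corollaire \ref{exposant2} (the effective Colliot-Thélène--Raskind--Salberger argument) and the image's $\ell$-torsion by Théorème \ref{theorem:exponentgeo} (injectivity of Bloch's Abel--Jacobi map in codimension $2$ via Merkurjev--Suslin, then Cadoret--Tamagawa applied to the lisse sheaf $R^3\pi_*\Z_\ell(2)$ on the curve $S$), and combine the two bounds. The only differences are cosmetic: the paper multiplies the two annihilating integers rather than summing exponents, and note that the hypothesis $H^2(\mathcal X_s,\mathcal O_{\mathcal X_s})=0$ is in fact used only for the arithmetic part, not for the geometric one as your write-up suggests.
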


Ce dernier résultat motive la question suivante :

\begin{question}
Soit $S$ un schéma de type fini sur $\Q$, et soit $\pi : \mathcal X\ra S$ un morphisme projectif et lisse. Soient $d,i$ deux  entiers strictement positifs. Existe-t-il un entier $N=N(\pi,d,i)$ qui ne d\'epend que de $\pi,d$, et $i$, tel que, pour tout $k$-point $s$ de $S$, o\`u $k$ est un corps de nombres de degr\'e $[k:\mathbb Q]=d$, on ait
$$N CH^i(\mathcal X_s)_{tors}=0?$$
\end{question}

La section \ref{exempleChatelet} montre que, déjà dans le cas des familles de surfaces de Châtelet, on ne peut pas s'attendre à ce que le Théorème \ref{theorem:exposant} s'étende en une bonne uniforme sur la torsion : étant donné $n$, le sous-groupe de $n$-torsion peut n'être pas uniformément borné.

Nous obtenons un tel résultat en fixant des modèles entiers des variétés considérées comme suit. Soit $U$ un ouvert non vide de $\mathrm{Spec}\,\mathcal{O}_k$.

Dans la section \ref{sectionen}, on s'int\'eresse aux $U$-points de $\mathcal S$ avec $U$ fix\'e. On utilise les m\'ethodes de J.-L. Colliot-Th\'el\`ene et W. Raskind,  P. Salberger,  M. Somekawa pour \'etudier le groupe $\mathrm{Im}[CH^2(\mathcal X_U)_{tors}\to CH^2(\mathcal X_{s})]$. On déduit de cette étude le résultat suivant :

\begin{theorem}\label{theorem:torsion}
Soit $\mathcal S$ un sch\'ema intègre, s\'epar\'e, de type fini sur $\mathbb Z$. Soit 
$\pi : \mathcal X\lra \mathcal S$
un morphisme projectif lisse \`a fibres g\'eom\'etriquement int\`egres. Supposons  que  $H^2(\mathcal X_s,\mathcal O_{\mathcal X_s})=0$ pour tout point $s\in \mathcal S$.
Soit $k$ un corps de nombres et soit $\mathcal O_k$ son anneau des entiers, soit $U\subset \Spec\Ok$ un ouvert non vide. Soit 
$s : \Spec k\lra \mathcal S$ l'image du point g\'en\'erique d'un $U$-point de $\mathcal S$.  Il existe un entier $N=N(\pi, U)$ tel que : 
$$|CH^2(\mathcal X_s)_{tors}|\leq N.$$
\end{theorem}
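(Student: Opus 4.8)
\noindent Le plan est de scinder le groupe fini $CH^2(\mathcal X_s)_{tors}$ en sa partie géométrique et sa partie arithmétique, puis de majorer chacune uniformément. Posons $X=\mathcal X_s$, $\bar X=\mathcal X_{\bar s}$ et $\Sigma=\Spec\Ok\setminus U$, ensemble fini de places fixé (ne dépendant que de $U$). On considère la suite exacte
\begin{equation*}
0\lra A\lra CH^2(X)_{tors}\lra B\lra 0,
\end{equation*}
où $B=\mathrm{Im}[CH^2(X)_{tors}\to CH^2(\bar X)^{G_k}]$ et $A=\Ker[CH^2(X)_{tors}\to CH^2(\bar X)^{G_k}]$. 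Il suffit de borner $|A|$ et $|B|$ par des constantes ne dépendant que de $\pi$ et de $U$. Pour la partie géométrique, on a $B\subseteq CH^2(\bar X)_{tors}$ ; sous l'hypothèse $H^2(X,\mathcal O_X)=0$, ce groupe est fini et son cardinal est contrôlé, via le théorème de Merkurjev-Suslin, par la cohomologie étale $H^3_{\mathrm{et}}(\bar X,\Q/\Z(2))$. Le théorème de changement de base lisse et propre identifie les invariants pertinents de $\bar X$ à ceux de la fibre générique géométrique $\mathcal X_{\bar\eta}$, indépendamment de $s$, d'où une borne $|B|\leq C_1(\pi)$ (voir Proposition \ref{geomgen} et Théorème \ref{theorem:exponentgeo}).

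\noindent Pour la partie arithmétique, je suivrais la méthode de Colliot-Thélène et Raskind. La suite spectrale de Hochschild-Serre pour le faisceau $\mathcal K_2$, combinée au théorème de Merkurjev-Suslin et à l'égalité $CH^2(X)=H^2_{\mathrm{Zar}}(X,\mathcal K_2)$, filtre $A$ par des sous-quotients de $H^1(k,H^1_{\mathrm{Zar}}(\bar X,\mathcal K_2))$ et de $H^2(k,H^0(\bar X,\mathcal K_2))$. Le second terme, pris en torsion, est un groupe de cohomologie galoisienne fini sur le corps de nombres $k$, de cardinal borné (ce terme est traité par la méthode de Colliot-Thélène et Raskind). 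Le point décisif est l'hypothèse $H^2(X,\mathcal O_X)=0$ : elle entraîne, en caractéristique zéro, $b_2(\bar X)=\rho(\bar X)$, donc $H^2_{\mathrm{et}}(\bar X,\Q_\ell(1))=\NS(\bar X)\otimes\Q_\ell$ (absence de partie transcendante), et la torsion de $H^1_{\mathrm{Zar}}(\bar X,\mathcal K_2)$ se trouve alors contrôlée par $\NS(\bar X)\otimes\Q/\Z(1)$. On est ainsi ramené à majorer l'image de $A$ dans $H^1(k,\NS(\bar X)\otimes\Q/\Z(1))$.

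\noindent C'est ici qu'intervient le modèle entier. Comme $s$ provient d'un $U$-point, le modèle $\mathcal X_U\to U$ est lisse et propre, donc $X$ a bonne réduction en toute place de $U$ ; par suite le $G_k$-module $\NS(\bar X)$, sous-quotient de $H^2_{\mathrm{et}}(\bar X,\Z_\ell(1))$, est non ramifié en dehors de $\Sigma$, et l'image de $A$ tombe dans le sous-groupe $H^1_\Sigma(k,\NS(\bar X)\otimes\Q/\Z(1))$ des classes non ramifiées en dehors de $\Sigma$. L'action de $G_k$ sur le réseau $\NS(\bar X)$, libre de rang $\rho=\rho(\mathcal X_{\bar\eta})$ constant par changement de base, se factorise par un quotient fini $\mathrm{Gal}(k'/k)$. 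Le théorème de Minkowski sur les sous-groupes finis de $\mathrm{GL}_\rho(\Z)$ borne $[k':k]$ par une constante $m(\rho)$, et $k'/k$ est non ramifiée en dehors de $\Sigma$ ; le théorème de Hermite-Minkowski montre alors que $k'$ parcourt un ensemble fini de corps ne dépendant que de $k$, $\Sigma$ et $\rho$, c'est-à-dire de $U$ et $\pi$. Sur chacun de ces $k'$, la théorie de Kummer identifie $H^1_{\Sigma'}(k',\Q/\Z(1))$ à une extension de $\mathcal O_{k',\Sigma'}^*\otimes\Q/\Z$ par un sous-groupe du groupe de classes $\Sigma'$-entier, tous deux finis et bornés (les $\Sigma'$-unités forment un groupe de type fini). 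Un argument d'inflation-restriction, joint à la borne sur l'exposant de $A$ fournie par le Théorème \ref{exposant2-k} et le Corollaire \ref{exposant2} (qui limite aussi à un nombre fini les premiers $\ell$ contribuant), donne finalement $|A|\leq C_2(\pi,U)$. On conclut $|CH^2(X)_{tors}|=|A|\cdot|B|\leq C_1C_2=:N(\pi,U)$.

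\noindent La difficulté principale est le contrôle uniforme de la partie arithmétique lorsque le réseau de Néron-Severi $\NS(\mathcal X_{\bar s})$ et son action galoisienne varient avec $s$ : ce réseau peut sauter, et rien n'interdit a priori que l'action se fasse à travers des groupes finis de plus en plus gros. La résolution repose sur la conjonction (a) de la bonne réduction fournie par le $U$-point, qui confine toute la ramification à l'ensemble fini $\Sigma$, (b) de la borne de Minkowski qui maintient $[k':k]$ borné en fonction du seul invariant $\rho$, et (c) de Hermite-Minkowski qui réduit à un nombre fini de corps $k'$. Il restera enfin à vérifier, via le dictionnaire de Colliot-Thélène-Raskind entre $\mathcal K_2$-cohomologie et cohomologie galoisienne et la description de Somekawa, que les classes issues de $CH^2(\mathcal X_U)_{tors}$ sont exactement les classes non ramifiées, ce qui légitime le passage par le modèle entier $\mathcal X_U$.
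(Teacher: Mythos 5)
Your plan for the arithmetic part has a genuine gap, and it is exactly the step you defer to the last sentence (``il restera enfin à vérifier\dots que les classes \dots sont exactement les classes non ramifiées''). Good reduction of $\mathcal X_s$ over $U$ makes the \emph{coefficient module} $\NS(\mathcal X_{\bar s})$ unramified outside $\Sigma$ (changement de base propre et lisse), but it does not make the \emph{classes} coming from $\Ker[CH^2(\mathcal X_s)\to CH^2(\mathcal X_{\bar s})]$ unramified outside $\Sigma$: a torsion cycle on $\mathcal X_s$ extends by adhérence to a cycle on $\mathcal X_U$, but its class in $CH^2(\mathcal X_U)$ need no longer be of torsion, because the kernel of $CH^2(\mathcal X_U)\to CH^2(\mathcal X_s)$ is the cokernel of $H^1(\mathcal X_s,\mathcal K_2)\to\bigoplus_{v\in U^{(1)}}\Pic(\mathcal X_v)$ (suite de localisation, Lemme \ref{sloc}), which is a priori uncontrolled. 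The entire content of the paper's proof is to bound the exponent of this kernel uniformly (Propositions \ref{relevement} et \ref{proposition:relev-tors}), and this is not formal: it uses Somekawa's reciprocity sequence (Théorème \ref{theoSomekawa}) applied to $\Pic^0_{\mathcal X_U/U}$, the Katz--Lang finiteness bounding the cokernel by $\#\mathcal J_{s_1}(\kappa(s_1))\,\#\mathcal J_{s_2}(\kappa(s_2))$ (Corollaire \ref{coroSomekawa}), the class number $\#Cl(U)$ (Lemme \ref{lemme3.2}), a finite étale cover $\mathcal S'\to\mathcal S$ trivializing the Néron--Severi action (Proposition \ref{proposition:reduc-etale}), and Serre's finiteness of extensions of $k$ of bounded degree unramified over $U$. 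Your Minkowski/Hermite--Minkowski considerations replace none of this; they only handle the coefficient module. Once the lifting is available, the paper bounds $CH^2(\mathcal X_U)[n]$ via the Leray spectral sequence over $U$ (Proposition \ref{proposition:fin-U}), which is the rigorous version of the ``unramified Galois cohomology'' computation you sketch.

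Two further statements in your text are false as written. First, $CH^2(\mathcal X_{\bar s})_{tors}$ is \emph{not} finite in general under $H^2(\mathcal X_s,\mathcal O_{\mathcal X_s})=0$: for a surface with $q>0$ and $p_g=0$ (e.g.\ bielliptique), Roitman gives $CH^2(\mathcal X_{\bar s})_{tors}\simeq \Alb(\mathcal X_{\bar s})_{tors}$, an infinite divisible group. What is finite and uniformly bounded is $H^3_{\acute{e}t}(\mathcal X_{\bar s},\Q/\Z(2))^{G_k}$, and this comes from the weight argument via specialization to \emph{two} closed points of distinct residue characteristics (Proposition \ref{geomgen}, which you do cite) --- smooth proper base change alone does not yield it, and the bound depends on $U$ (through $d$ and the number of places at infinity), not on $\pi$ alone. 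Second, ${\mathcal O}_{k',\Sigma'}^*\otimes\Q/\Z$ is not finite: it is divisible of corank equal to the rank of the $\Sigma'$-units; only its $n$-torsion is finite. This second slip is repairable by the exponent bound you invoke (Corollaire \ref{exposant2}), but the missing lifting step above is not, so the proposal does not constitute a proof.
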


\subsection{Notations}

Si $A$ est un ensemble fini, on \'ecrit $\# A$ ou $|A|$ pour le nombre des \'el\'ements de $A$. Si $A$ est un groupe ab\'elien, $n>0$ est un entier positif et $\ell$ est un nombre premier, on d\'efinit $A[n]=\{x\in A \,|\,nx=0\}$ le sous-groupe des \'el\'ements de $n$-torsion, $A\{\ell\}=\{x\in A \,|\exists n\;\ell^nx=0\}$ le sous-groupe des \'el\'ements de torsion $\ell$-primaire, et $A_{tors}=\{x\in A \,|\exists n,\;nx=0\}$ le sous-groupe des \'el\'ements de torsion.

Soit $X$ un sch\'ema noeth\'erien. Si $n$ est un entier inversible sur $X$, on note $\mu_{n}$ le faisceau \'etale sur $X$ d\'efini par les racines $n$-i\`emes de l'unit\'e. Pour $j$ un entier positif, on note $\mu_{n}^{\otimes j}=\mu_{n}\otimes\ldots\otimes\mu_{n}$ ($j$ fois).  On pose $\mu_{n}^{\otimes j}=Hom(\mu_{n}^{\otimes (-j)}, \mathbb Z/n)$ si $j$ est n\'egatif et $\mu_{n}^{\otimes 0}=\mathbb Z/n$. On note $H^i_{\acute{e}t}(X,\mu_n^{\otimes j})$ les groupes de cohomologie \'etale de $X$ \`a valeurs dans $\mu_n^{\otimes j}$.
Les groupes 
$H^i_{\acute{e}t}(X,\mathbb Q_{\ell}/\mathbb Z_{\ell}(j))$, resp.
 $H^{i}_{\acute{e}t}(X,\mathbb Z_{\ell}(j))$ sont obtenus par passage \`a la limite inductive, resp. projective dans les groupes $H^i_{\acute{e}t}(X,\mu_n^{\otimes j})$ lorsque $n$ varie  parmi les puissances d'un nombre premier $\ell$ inversible sur $X$. On \'ecrit $\mathbb G_m$ pour le groupe multiplicatif et le faisceau \'etale ainsi d\'efini sur le sch\'ema $X$; on a Pic$(X)\simeq H^1_{\acute{e}t}(X,\mathbb G_m)$.
  
Si $i$ est un entier positif, on note $X^{(i)}$ l'ensemble des points de $X$ de codimension $i$ et on note $CH^i(X)$ le groupe des cycles de codimension $i$ modulo l'\'equivalence rationnelle \cite{Fulton}.
Si $j$ est un entier positif, on note $\mathcal K_j$ le faisceau de Zariski associ\'e au pr\'efaisceau $U\mapsto K_j(H^0(U, \mathcal O_U))$, le groupe $K_j(A)$ \'etant celui associ\'e par Quillen \`a l'anneau $A$.

Si $k$ est un corps, on \'ecrit $\bar k$ pour d\'esigner une cl\^oture alg\'ebrique de $k$, on \'ecrit $k^s$ pour une cl\^oture s\'eparable de $k$, $G_k=Gal(k^s/k)$ est le groupe de Galois absolu.  Si $X$ est une vari\'et\'e alg\'ebrique d\'efinie sur un corps $k$, on note $\bar X=X_{\bar k} =X\times_k \bar{k}$. Si $X$ est int\`egre, on note $k(X)$ son corps des fonctions. 

Si $k$ est un corps de nombres, on \'ecrit $\Ok$ pour l'anneau des entiers de $k$. Si $U\subset \Spec \Ok$ est un ouvert, les places {\it \`a l'infini} de $U$ sont les places finies de $\Ok$ qui  correspondent aux id\'eaux premiers du compl\'ementaire de $U$ dans $\Spec \Ok$.

Si $X$ est une vari\'et\'e projective  lisse sur un corps $k$ s\'eparablement clos, on \'ecrit $b_2(X)=\rho(X)$ si pour tout premier $\ell$ diff\'erent de la caract\'eristique de $k$,   l'application naturelle $\mathrm{Pic}(X)\otimes \mathbb Q_{\ell}\to H^2_{\acute{e}t}(X, \mathbb Q_{\ell}(1))$ est surjective. Notons qu'il suffit de le v\'erifier pour un seul premier $\ell$.  Cette propri\'et\'e est stable par sp\'ecialisation.  Si $k$ est de caract\'eristique nulle, le th\'eor\`eme de Lefschetz sur les classes de type $(1,1)$ montre que $b_2(X)=\rho(X)$ si et seulement si $H^2(X, \mathcal O_X)~=~0$. 

\

\noindent\textbf{Remerciements.} Le premier auteur remercie l'Institut Courant, NYU pour son hospitalit\'e. Il est soutenu par le projet ERC AlgTateGro (Horizon 2020 Research and Innovation Programme, grant agreement No 715747).

Les auteurs remercient Anna Cadoret pour plusieurs discussions et commentaires sur le manuscrit, ainsi que Jean-Louis Colliot-Thélène pour plusieurs discussions.

\section{Uniformité de l'exposant}

Soit $k$ un corps de nombres et soit $X$ une vari\'et\'e projective et lisse sur $k$. Dans cette section on s'int\'eresse \`a l'exposant du groupe $CH^2(X)_{tors}$. Le but est de démontrer le théorème \ref{theorem:exposant}. On commence par borner l'exposant de la partie g\'eom\'etrique de ce groupe, \`a savoir son image dans le groupe $CH^2(\bar X)$; on \'etudie ce dernier groupe via l'application d'Abel-Jacobi $\ell$-adique.

\subsection{Image de $CH^i(X)_\tor$ par l'application d'Abel-Jacobi}\label{sectiongeo}

\subsubsection{} Soit $S$ une courbe quasi-projective sur $k$, et soit $\pi:\mathcal X\to S$ un morphisme projectif lisse. Soit $\ell$ un nombre premier. Soit $\overline\eta$ un point générique géométrique de $S$. 

Soit $K$ une extension du corps $k$. Si $s$ est un $K$-point de $S$, et $\overline s$ est un point géométrique de $S$ au-dessus de $s$, le groupe de Galois absolu $G_K$ de $K$ agit naturellement sur les groupes de cohomologie étale 
$$H^i_{\acute{e}t}(\mathcal X_{\overline s}, \Q_\ell/\Z_\ell(j))$$
où $i$ est un entier positif et $j$ un entier arbitraire.

Le résultat suivant suit du théorème principal de \cite{CT}. 

\begin{proposition}\label{proposition:fini-coh}
Soit $d$ un entier strictement positif. Soient $i$ et $j$ deux entiers avec $i\geq 0$  et $i\neq 2j$. Il existe un entier $N=N(\pi,i,j,d)$ tel que, pour toute extension finite $K$ de $k$ de degré au plus $d$, tout $K$-point $s$ de $S$, et tout point géométrique $\overline s$ de $S$ au-dessus de $s$, on ait:
$$|H^i_{\acute{e}t}(\mathcal X_{\overline s}, \Q_\ell/\Z_\ell(j))^{G_K}|\leq \ell^N.$$
\end{proposition}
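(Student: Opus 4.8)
The plan is to recognize the left-hand side as the group of $G_K$-invariants of the fibre of a lisse $\ell$-adic sheaf on $S$, and then to invoke the main theorem of \cite{CT}, whose hypothesis I verify by a weight argument using $i\neq 2j$.

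First I would construct the sheaf. By smooth and proper base change for the torsion coefficients $\mu_{\ell^n}^{\otimes j}$ and passage to the limit, the higher direct image $\cF=R^i\pi_*\Q_\ell/\Z_\ell(j)$ is a lisse $\Q_\ell/\Z_\ell$-sheaf on $S$, with fibre $\cF_{\overline s}=H^i_{\acute{e}t}(\mathcal X_{\overline s},\Q_\ell/\Z_\ell(j))$ carrying its natural $G_K$-action for every $K$-point $s$. Likewise $T=R^i\pi_*\Z_\ell(j)$ is a lisse $\Z_\ell$-sheaf, that is, a continuous representation of $\pi_1(S,\overline\eta)$ on a free $\Z_\ell$-module of finite rank. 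Since the torsion subsheaf of $R^i\pi_*\Z_\ell$ is a lisse subsheaf, its fibres have order bounded independently of $s$; thus, up to a bounded error, $\cF_{\overline s}$ agrees with $(T\otimes\Q_\ell/\Z_\ell)_{\overline s}$, and it suffices to bound $|(T\otimes\Q_\ell/\Z_\ell)_{\overline s}^{G_K}|$. This is exactly the type of quantity controlled by the uniform boundedness results of Cadoret and Tamagawa.

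The main theorem of \cite{CT} provides a bound of the required shape $\ell^N$, uniform over all $s\in S(K)$ with $[K:k]\leq d$, under the hypothesis that the arithmetic invariants of the representation vanish generically; concretely, that the space of geometric invariants $W:=(T\otimes\Q_\ell)_{\overline\eta}^{\pi_1(S_{\overline k})}$, viewed as a $G_k$-representation, has no nonzero invariants after any finite extension of $k$. I would verify this via weights. By Deligne's theorem (the Weil conjectures), after spreading $\pi$ out over an open subscheme of finite type over $\Z$ and reducing modulo primes, the sheaf $T\otimes\Q_\ell$ is pointwise pure of weight $i-2j$; equivalently, for every closed point $s_0$ of $S$ with residue field a number field $k_0$, the $G_{k_0}$-representation $H^i_{\acute{e}t}(\mathcal X_{\overline{s_0}},\Q_\ell(j))$ is pure of weight $i-2j$, so that at almost all finite places the Frobenius eigenvalues have absolute value $(N\mathfrak p)^{(i-2j)/2}$. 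Because $i\neq 2j$ this weight is nonzero. The geometric invariants $W$ are cut out by a lisse subsheaf of $T\otimes\Q_\ell$, which is again pure of weight $i-2j\neq 0$; hence no Frobenius eigenvalue can equal $1$, and $W^{G_K}=0$ for every finite extension $K/k$. This is precisely the vanishing required to apply \cite{CT}, and it fails exactly in the excluded case $i=2j$ (weight $0$), which is the source of the large invariants coming from algebraic cycle classes.

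The substance of the argument is the theorem of Cadoret--Tamagawa itself, which we are citing; the work left to us is the verification of its hypotheses. The step I expect to require the most care is the weight argument: because the closed points of $S$ have number-field (not finite-field) residue fields, purity is not immediate and must be obtained by spreading the family out to a scheme of finite type over $\Z$ and applying Weil II fibrewise over finite fields, then transporting pointwise purity at good places back to the vanishing of $W^{G_K}$. A secondary, more routine point is controlling the discrepancy between $\cF$ and $T\otimes\Q_\ell/\Z_\ell$ coming from $\ell$-torsion in the integral cohomology of the fibres, which is harmless since that torsion is bounded uniformly across the connected base $S$.
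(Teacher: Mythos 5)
Your proof is correct and follows essentially the same route as the paper's: apply the uniform boundedness theorem of Cadoret--Tamagawa (the paper cites \cite[Corollary 4.2]{CT}) to the $\pi_1(S,\overline\eta)$-representation $H^i_{\acute{e}t}(\mathcal X_{\overline\eta},\Z_\ell(j))$ with trivial character, the hypothesis on the character being precisely your weight argument via the Weil conjectures and $i\neq 2j$, and then pass from $\bigl(H^i_{\acute{e}t}(\mathcal X_{\overline s},\Z_\ell(j))\otimes\Q_\ell/\Z_\ell\bigr)^{G_K}$ to $H^i_{\acute{e}t}(\mathcal X_{\overline s},\Q_\ell/\Z_\ell(j))^{G_K}$ at the cost of a finite error group constant in the family. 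The only imprecision is your identification of that error group: by the coefficient exact sequence (\cite[p.~781]{CTSS}, which is what the paper uses) it is $H^{i+1}_{\acute{e}t}(\mathcal X_{\overline s},\Z_\ell(j))_{tors}$, not the torsion subsheaf of $R^i\pi_*\Z_\ell$ (finite torsion contributes nothing under $\otimes\,\Q_\ell/\Z_\ell$), but this is harmless since that degree-$(i+1)$ torsion group is likewise finite with isomorphism class independent of $s$.
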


\begin{proof}
On applique \cite[Corollary 4.2]{CT} à la représentation du groupe fondamental étale $\pi^1(S, \overline\eta)$ de $S$ sur $M=H^{i}_{\acute{e}t}(X_{\bar\eta}, \Z_\ell(j))$, en prenant pour $\chi$ le caractère trivial -- les conjectures de Weil assurant l'hypothèse sur $\chi$. On en déduit un entier $N_1$ tel que, pour toute extension $K$ de $k$ de degré au plus $d$ et tout $K$-point $s$ de $S$, on ait
$$(H^{i}_{\acute{e}t}(\mathcal X_{\overline s}, \Z_\ell(j))\otimes\Q_\ell/\Z_\ell)^{G_K}\subset H^{i}_{\acute{e}t}(\mathcal X_{\overline s}, \Z_\ell(j))/\ell^{N_1}.$$

Considérons par ailleurs la suite exacte $G_K$-équivariante (voir par exemple \cite[p.781]{CTSS}):
$$0\lra H^{i}_{\acute{e}t}(\mathcal X_{\overline s}, \Z_\ell(j))\otimes\Q_\ell/\Z_\ell\lra H^{i}_{\acute{e}t}(\mathcal X_{\overline s}, \Q_\ell/\Z_\ell(j))\lra H^{i+1}_{\acute{e}t}(\mathcal X_{\overline s}, \Z_\ell(j))_\tor\lra 0.$$
On en déduit une suite exacte
$$0\lra (H^{i}_{\acute{e}t}(\mathcal X_{\overline s}, \Z_\ell(j))\otimes\Q_\ell/\Z_\ell)^{G_K}\lra H^{i}_{\acute{e}t}(\mathcal X_{\overline s}, \Q_\ell/\Z_\ell(j))^{G_K}\lra H^{i+1}_{\acute{e}t}(\mathcal X_{\overline s}, \Z_\ell(j))_\tor.$$
Le terme de droite étant un groupe fini dont la classe d'isomorphisme est indépendante de $s$, cela conclut la preuve.
\end{proof}

\subsubsection{Finitude pour l'application d'Abel-Jacobi de Bloch et conséquence en codimension $2$}

Soit $X$ une variété projective lisse sur un corps $k$. Soit $\ell$ un nombre premier inversible dans $k$. 

Dans \cite{Bloch}, Bloch définit une application d'Abel-Jacobi, fonctorielle en $X$ pour l'action des correspondances:
$$AJ^i_\ell : CH^i(X_{\overline k})\{\ell\}\lra H^{2i-1}_{\acute{e}t}(X_{\overline k}, \Q_\ell/\Z_\ell(i)).$$
Par fonctorialité on obtient une application, que nous noterons aussi $AJ^i_\ell$ : 
$$AJ^i_\ell : CH^i(X)\{\ell\}\lra H^{2i-1}_{\acute{e}t}(X_{\overline k}, \Q_\ell/\Z_\ell(i))^{G_k}.$$

Si $k$ est algébriquement clos, l'application $AJ^d$ est un isomorphisme et l'application $AJ^2$ est injective (voir \cite{Bloch}, \cite[Th\'eor\`eme 4.3]{CT-cycles}). On déduit immédiatement de la Proposition \ref{proposition:fini-coh} l'énoncé suivant:
\begin{theorem}\label{theorem:exponentgeo}
Soit $S$ une courbe quasi-projective sur un corps de nombres $k$, et soit $\pi:\mathcal X\to S$ un morphisme projectif lisse. Soit $\ell$ un nombre premier. 

Soit $d$ un entier strictement positif. Il existe un entier $N=N(\pi,d)$ tel que, pour toute extension finie $K$ de $k$ de degré au plus $d$, tout $K$-point $s$ de $S$, et tout point géométrique $\overline s$ de $S$ au-dessus de $s$, on ait:
$$|\mathrm{Im}(CH_0(\mathcal X_s)\lra CH_0(\mathcal X_{\overline s}))\{\ell\}|\leq \ell^N$$
et:
$$|\mathrm{Im}(CH^2(\mathcal X_s)\lra CH^2(\mathcal X_{\overline s}))\{\ell\}|\leq \ell^N.$$
\end{theorem}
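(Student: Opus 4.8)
The plan is to deduce Theorem \ref{theorem:exponentgeo} directly from Proposition \ref{proposition:fini-coh} by applying the $\ell$-adic Abel-Jacobi map to translate a uniform bound on the invariant cohomology into a uniform bound on the images of the torsion in the Chow groups. The key observation is that the two cases correspond to the two choices $(i,j)=(d_s,d_s)$ and $(i,j)=(2,2)$ of the pair in the Proposition, where $d_s=\dim \mathcal X_s$ is constant along the (geometrically integral) fibers of $\pi$; in both cases one must check the hypothesis $i\neq 2j$ so that the Proposition applies.

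First I would reduce both statements to a single uniform bound on a group of invariants in étale cohomology. For the codimension-$2$ case, I would compose the cycle class with the change of base to $\overline s$: an element of $CH^2(\mathcal X_s)\{\ell\}$ maps to $CH^2(\mathcal X_{\overline s})\{\ell\}$, and since the Galois group $G_K$ fixes the image, the Abel-Jacobi map $AJ^2_\ell$ carries the image $\mathrm{Im}(CH^2(\mathcal X_s)\to CH^2(\mathcal X_{\overline s}))\{\ell\}$ into $H^3_{\acute{e}t}(\mathcal X_{\overline s},\Q_\ell/\Z_\ell(2))^{G_K}$. By the cited injectivity of $AJ^2$ over the algebraically closed field $\overline k$, this map is injective on the image, so the cardinality of the image is bounded by $|H^3_{\acute{e}t}(\mathcal X_{\overline s},\Q_\ell/\Z_\ell(2))^{G_K}|$. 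Here $(i,j)=(3,2)$ satisfies $i\neq 2j$, so Proposition \ref{proposition:fini-coh} gives a uniform bound $\ell^N$ with $N=N(\pi,3,2,d)$, which depends only on $\pi$ and $d$. For the zero-cycle case I would argue identically, using that $AJ^{d_s}$ is an isomorphism over $\overline k$ and that $CH_0=CH^{d_s}$, so the relevant cohomology group is $H^{2d_s-1}_{\acute{e}t}(\mathcal X_{\overline s},\Q_\ell/\Z_\ell(d_s))$ with $(i,j)=(2d_s-1,d_s)$, where again $i=2d_s-1\neq 2d_s=2j$.

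The main subtlety, rather than a deep obstacle, is bookkeeping with the Galois action and the passage between $s$ and $\overline s$. One must verify that the Abel-Jacobi map of Bloch is genuinely $G_K$-equivariant and compatible with the specialization map $CH^2(\mathcal X_s)\to CH^2(\mathcal X_{\overline s})$, so that the image lands in the $G_K$-invariants as claimed; this is exactly the functoriality recorded in the construction of $AJ^i_\ell$ above. I would also note that the injectivity statements for $AJ^2$ and the isomorphism for $AJ^{d_s}$ are invoked over the algebraically closed base $\overline k$, which is legitimate since we are bounding the image inside $CH^2(\mathcal X_{\overline s})$, not $CH^2(\mathcal X_s)$ itself. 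Once the equivariance and the injectivity are in place, the bound is immediate and uniform, since the constant produced by Proposition \ref{proposition:fini-coh} depends only on $\pi$, the fixed cohomological degree, and $d$, and the fiber dimension $d_s$ is locally constant hence bounded on $S$. The only point requiring a word of care is that $d_s$ could a priori vary if $S$ were disconnected, but $S$ is geometrically integral, so $d_s$ is constant and the single value of $N$ suffices.
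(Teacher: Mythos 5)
Your proposal is correct and is essentially the paper's own proof: the paper deduces the theorem ``immédiatement'' from Proposition \ref{proposition:fini-coh}, using exactly what you spell out, namely the $G_K$-equivariance of Bloch's Abel--Jacobi maps, the injectivity of $AJ^2_\ell$ and the bijectivity of $AJ^{d_s}_\ell$ over an algebraically closed field, so that the Galois-fixed image of the Chow group injects into the invariant cohomology bounded by the Proposition. The only blemish is your opening sentence, where the pairs fed into the Proposition should be $(i,j)=(2d_s-1,d_s)$ and $(i,j)=(3,2)$ rather than $(d_s,d_s)$ and $(2,2)$; you correct this yourself in the detailed argument, so it is a notational slip and not a gap.
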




La remarque suivante permet, sous des hypoth\`eses plus fortes, de traiter le cas du groupe de torsion tout entier et d'un sch\'ema de base de dimension arbitraire.


\begin{proposition}\label{geomgen}
Soit $\mathcal S$ un sch\'ema s\'epar\'e int\`egre de type fini sur $\mathbb Z$ et soit $\pi:\mathcal X\to \mathcal S$ un morphisme projectif lisse.
Soient $d,r>0$ des entiers. Il existe une constante $N=N(\pi,d,r)$, qui v\'erifie la propri\'et\'e suivante:  soit $k$ un corps de nombres de degr\'e au plus $d$, soient $s$ un $k$-point de $\mathcal S$ et  $\overline s$ un point géométrique de $\mathcal S$ au-dessus de $s$; si le morphisme 
$$s : \Spec k\lra \mathcal S$$
s'étend en un morphisme 
$$U\lra \mathcal S$$
au-dessus d'un ouvert $U\subset \Spec \mathcal O_k$ tel que  $U$ admet au plus $r$ places \`a l'infini, alors 
$$
|\mathrm{Im}(CH^2(\mathcal X_s)\lra CH^2(\mathcal X_{\overline s}))_{tors}|\leq N
$$
\end{proposition}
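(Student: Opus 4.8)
The plan is to bound the $\ell$-primary part of $\mathrm{Im}(CH^2(\mathcal X_s)\to CH^2(\mathcal X_{\overline s}))$ prime by prime, and then to show that only finitely many primes $\ell$ contribute, each by a uniformly bounded amount. Since the geometric Abel--Jacobi map $AJ^2_\ell$ over $\overline k$ is injective and $G_k$-equivariant, and the classes in the image are $G_k$-invariant, the group $\mathrm{Im}(CH^2(\mathcal X_s)\to CH^2(\mathcal X_{\overline s}))\{\ell\}$ embeds into $H^3_{\acute{e}t}(\mathcal X_{\overline s},\Q_\ell/\Z_\ell(2))^{G_k}$. Using the exact sequence
$$0\lra H^3_{\acute{e}t}(\mathcal X_{\overline s},\Z_\ell(2))\otimes\Q_\ell/\Z_\ell\lra H^3_{\acute{e}t}(\mathcal X_{\overline s},\Q_\ell/\Z_\ell(2))\lra H^4_{\acute{e}t}(\mathcal X_{\overline s},\Z_\ell(2))_{tors}\lra 0,$$
I would bound separately the invariants of the divisible part $A_\ell:=H^3_{\acute{e}t}(\mathcal X_{\overline s},\Z_\ell(2))\otimes\Q_\ell/\Z_\ell$ and the finite part $H^4_{\acute{e}t}(\mathcal X_{\overline s},\Z_\ell(2))_{tors}$.

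For the finite part, note that $s$ factors through $\mathcal S_\Q:=\mathcal S\times_\Z\Q$, so $\mathcal X_{\overline s}$ is a fibre of the smooth proper family $\mathcal X_\Q\to\mathcal S_\Q$ over a connected $\Q$-scheme of finite type. Hence $R^4\pi_*\Z_\ell(2)$ is lisse on $\mathcal S_\Q$ for every $\ell$, its stalks at the various geometric points are abstractly isomorphic, and the order of $H^4_{\acute{e}t}(\mathcal X_{\overline s},\Z_\ell(2))_{tors}$ is independent of $s$. Comparing, over the generic geometric point, étale with Betti cohomology shows that this group vanishes for all $\ell$ outside a finite set $\Sigma=\Sigma(\pi)$ and has order at most a constant $M=M(\pi)$ in general.

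For the divisible part I would use the integral model to produce a Frobenius. Fix $\ell$. Since $U$ omits at most $r$ places and $[k:\Q]\le d$, among the places of $k$ lying over the first few rational primes one finds a place $v\in U$ of good reduction with residue characteristic $p_v\neq\ell$ and residue cardinality $q_v\le B(d,r)$, where $B(d,r)$ depends only on $d$ and $r$. At $v$ the representation is unramified, so $A_\ell^{G_k}\subset A_\ell^{\Fr_v}$. Let $\beta_1,\dots,\beta_{b_3}$ be the eigenvalues of $\Fr_v$ on $H^3_{\acute{e}t}(\mathcal X_{\overline s},\Q_\ell)$, where $b_3$ is the (constant) third Betti number of the fibres; by the Weil conjectures $|\beta_i|=q_v^{3/2}$ and $\prod_i(t-\beta_i)\in\Z[t]$, so that $n_v:=\prod_i(q_v^2-\beta_i)$ is a nonzero integer with $|n_v|\le(q_v^2+q_v^{3/2})^{b_3}$. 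As the $\beta_i$ avoid $q_v^2$, the operator $\Fr_v-1$ is invertible on $A_\ell\otimes\Q_\ell$, and the snake lemma gives $|A_\ell^{\Fr_v}|=\ell^{v_\ell(n_v)}$.

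The bound $|n_v|\le(B(d,r)^2+B(d,r)^{3/2})^{b_3}=:C$ is uniform in $s$, since only the archimedean sizes of the $\beta_i$ and the constant $b_3$ enter. Therefore $A_\ell^{G_k}=0$ whenever $\ell\nmid n_v$, in particular for every $\ell>C$; combined with the finite part, $H^3_{\acute{e}t}(\mathcal X_{\overline s},\Q_\ell/\Z_\ell(2))^{G_k}=0$ for all $\ell$ outside the finite set $\Sigma\cup\{\,\ell\le C\,\}$, whose cardinality is bounded in terms of $\pi,d,r$. For each remaining prime one has $|A_\ell^{G_k}|\le\ell^{v_\ell(n_v)}\le C$ and the finite part has order at most $M$, so the contribution is at most $CM$; multiplying over the bounded set of exceptional primes yields a uniform bound $N(\pi,d,r)$ (and in particular the finiteness asserted). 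The hard part is exactly this passage from a single prime to all primes simultaneously: the key is that one arithmetic quantity, $n_v$, controlled uniformly by the Weil conjectures and by the existence of a good place of bounded residue characteristic, at once forces vanishing for almost all $\ell$ and bounds the few exceptional $\ell$-contributions independently of the point $s$.
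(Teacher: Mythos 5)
Your proposal is correct, and its skeleton is the same as the paper's: both pass, via the injectivity and Galois-equivariance of Bloch's map $AJ^2_\ell$, to bounding $H^3_{\acute{e}t}(\mathcal X_{\overline s},\Q_\ell/\Z_\ell(2))^{G_k}$, and both exploit that the bounds on $[k:\Q]$ and on the number of places at infinity of $U$ force $U$ to contain a place of bounded residue cardinality whose residue characteristic avoids any given $\ell$ (your prime-counting step is Lemme~\ref{places} in disguise). The divergence is in how uniformity is extracted after specialization. The paper invokes \cite[Theorem 5]{CTRK2} as a black box --- the invariants inject into $\oplus_{\ell\neq p}H^3_{\acute{e}t}(\mathcal X_{\overline s_0},\Q_\ell/\Z_\ell(2))^{G_{\mathbb F}}$, which is finite --- and obtains uniformity from Lemme~\ref{places}: the reductions $\mathcal X_{s_0}$ can be taken among a fixed finite list of fibres of $\pi$, so only finitely many finite groups occur, and the bound is a maximum over that unspecified list. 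You instead re-prove the finite-field finiteness effectively: after splitting off the divisible part via the sequence of \cite{CTSS}, Deligne's purity together with the $\ell$-independence and integrality of the characteristic polynomial of Frobenius give $|A_\ell^{\Fr_v}|=\ell^{v_\ell(n_v)}$ with $|n_v|\le(q_v^2+q_v^{3/2})^{b_3}$, a bound depending only on $q_v\le B(d,r)$ and on $b_3(\pi)$, not on which fibre occurs; this kills all $\ell$ beyond an explicit threshold and bounds the exceptional ones. Your route buys effectivity (the dependence on $\pi$ is only through $b_3$ and the torsion of $H^4$ of a single complex fibre) and never needs the specialization fibres to range over a finite set; its cost is having to treat the quotient $H^4_{\acute{e}t}(\mathcal X_{\overline s},\Z_\ell(2))_{tors}$ separately (by constancy in the family), which the paper's citation absorbs. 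One harmless notational slip: you should say $\Fr_v-1$ is invertible on $H^3_{\acute{e}t}(\mathcal X_{\overline s},\Q_\ell(2))$ rather than on $A_\ell\otimes\Q_\ell$ (the latter is zero, $A_\ell$ being torsion); the snake-lemma computation you then run on the tautological sequence relating the lattice, the vector space, and $A_\ell$ is the standard, correct one.
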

\begin{proof}
Puisque l'application $AJ^2_{\ell}$ est injective, il suffit de borner la taille du groupe de cohomologie \'etale $H^3_{\acute{e}t}( \mathcal X_{\bar s}, \mathbb Q/\mathbb Z(2))^{G_k}$.  Soit $s_0$  un point ferm\'e de $U$  de corps r\'esiduel un corps fini $\mathbb F$ de caract\'eristique $p$.  On a alors l'inclusion de groupes finis :
\begin{equation}\label{majorationf}
\oplus_{\ell\neq p}H^3_{\acute{e}t}({\mathcal X}_{\bar s}, \mathbb Q_{\ell}/\mathbb Z_{\ell}(2))^{G_k}\subset \oplus_{\ell\neq p} H^3_{\acute{e}t}({\mathcal X}_{\bar s_0}, \mathbb Q_{\ell}/\mathbb Z_{\ell}(2))^{G_{\mathbb F}}
\end{equation}
 (voir \cite[Theorem 5]{CTRK2}). On applique le lemme \ref{places} ci-dessous: puisque le degr\'e de $k$ est born\'e par $d$, et le nombre de places \`a l'infini de $U$ est born\'e par  $r$, on peut choisir un nombre fini de fibres $\mathcal X_{s_0}$ qui donnent donc une borne pour la taille du groupe $H^3_{\acute{e}t}( \mathcal X_{\bar s}, \mathbb Q/\mathbb Z(2))^{G_k}$ d'apr\`es l'inclusion (\ref{majorationf}). 
\end{proof}

\begin{lemma}\label{places}
Soit $\mathcal S$ un sch\'ema s\'epar\'e de type fini sur $\mathbb Z$. Soient $n,d,r>0$ des entiers. Il existe une constante $m$, qui ne d\'epend que de $n,d,r>0$ et $m$ points $s_1,\ldots, s_{m}\in \mathcal S$ tels que:
\begin{enumerate}
\item les corps r\'esiduels $\kappa(s_1),\ldots \kappa(s_{m})$  sont finis, de caract\'eristiques   premi\`eres \`a $n$;
\item si $k$ est un corps de nombres de degr\'e au plus $d$,  si  $U\subset \Spec \Ok$ est un ouvert tel que $U$ admet au plus $r$ places \`a l'infini, et si  $U\to \mathcal S$ est un $U$-point de $\mathcal S$, alors il existe $1\leq i<j\leq m$ tels que les caract\'eristiques de corps r\'esiduels de $\kappa(s_i)$ et $\kappa(s_j)$ sont diff\'erentes, et  les fibres de $U$ au-dessus de $s_i$ et $s_j$ sont non vides.
\end{enumerate}
\end{lemma}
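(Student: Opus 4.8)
The plan is to reduce the statement to a pigeonhole argument over residue characteristics, using the fact that any $U$-point is controlled, prime by prime, by a single degree bound. First I would record the following elementary observation. Write $U = \Spec\Ok \setminus \{\mathfrak{p}_1,\dots,\mathfrak{p}_s\}$ with $s \le r$, and let $\phi \colon U \to \mathcal{S}$ be a $U$-point. If $\mathfrak{p} \in U$ is a closed point, i.e. a nonzero prime of $\Ok$ lying above a rational prime $p$, then its residue field $\kappa(\mathfrak{p}) = \Ok/\mathfrak{p}$ is finite, and the fundamental identity $\sum_{\mathfrak{q}\mid p} e_{\mathfrak{q}} f_{\mathfrak{q}} = [k:\Q] \le d$ bounds its residue degree by $d$. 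Since $\kappa(\phi(\mathfrak{p}))$ embeds into $\kappa(\mathfrak{p})$, it is finite of degree at most $d$ over $\mathbb{F}_p$; and a point with finite residue field on a scheme of finite type over $\Z$ is closed (it lies in the closed fibre $\mathcal{S}_p := \mathcal{S}\times_{\Z}\mathbb{F}_p$ and is closed there by the Nullstellensatz). Hence $\phi(\mathfrak{p})$ is a closed point of $\mathcal{S}_p$ of residue degree at most $d$.

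Next I would control which characteristics are reached by $\phi(U)$. The rational primes $p$ for which \emph{every} prime of $\Ok$ above $p$ has been deleted from $U$ number at most $r$, since each deleted $\mathfrak{p}_i$ lies above a single rational prime. Therefore, for all but at most $r$ rational primes $p$ there is some $\mathfrak{p}\in U$ above $p$, and by the previous paragraph $\phi(U)$ then contains a closed point of $\mathcal{S}_p$ of residue degree $\le d$.

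With this in hand I would choose the points $s_1,\dots,s_m$ \emph{before} seeing any particular $U$-point. Fix a set $P$ of $r+2$ rational primes none dividing $n$ (for instance the $r+2$ smallest such primes, a choice depending only on $n$ and $r$), and let $s_1,\dots,s_m$ be \emph{all} closed points of $\bigcup_{p\in P}\mathcal{S}_p$ whose residue field has degree at most $d$ over the corresponding $\mathbb{F}_p$. This set is finite: for each $p\in P$ and each $a\le d$ the closed points of $\mathcal{S}_p$ with residue field $\mathbb{F}_{p^a}$ are in finite number, being bounded by $\#\mathcal{S}_p(\mathbb{F}_{p^a})$, which is finite because $\mathcal{S}_p$ is of finite type over a finite field; so $m=m(\mathcal{S},n,d,r)$ depends only on the stated data ($\mathcal{S}$ being fixed throughout). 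Condition (1) holds by construction. For condition (2), at most $r$ of the primes in $P$ can be fully deleted from $U$, so at least two primes $p\neq p'$ of $P$ survive; by the second paragraph $\phi(U)$ contains, for each of them, a closed point of $\mathcal{S}_p$, resp. $\mathcal{S}_{p'}$, of residue degree $\le d$, which therefore belongs to $\{s_1,\dots,s_m\}$. These two points have distinct characteristics $p,p'$, both prime to $n$, and lie in $\phi(U)$, so the fibres of $U$ above them are nonempty.

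The only point requiring genuine care — beyond the bookkeeping above — is \textbf{uniformity}: the $s_i$ must be fixed independently of $k$, of $U$, and of the $U$-point. This is exactly what forces the two devices used here, namely selecting the primes $P$ from $n$ and $r$ alone, and listing \emph{every} bounded-degree point of the chosen fibres rather than the specific image points (which vary with the data). The uniform bound $[\kappa(\phi(\mathfrak{p})):\mathbb{F}_p]\le d$, itself a consequence of $[k:\Q]\le d$, is what keeps each fibre's contribution to the list finite; without such a degree bound the list would be infinite, so I expect this uniform bound to be the crux of the argument.
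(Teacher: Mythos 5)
Your proposal is correct and follows essentially the same route as the paper's proof: fix $r+2$ rational primes prime to $n$, take as $s_1,\dots,s_m$ all closed points of the corresponding fibres $\mathcal S_p$ of residue degree at most $d$, and conclude by the pigeonhole observation that at most $r$ of these primes can be entirely removed from $U$. Your write-up merely makes explicit some points the paper leaves implicit (the bound $[\kappa(\phi(\mathfrak p)):\mathbb F_p]\le d$ via the fundamental identity, and the finiteness of the set of bounded-degree closed points).
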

\begin{proof}
On choisit un ensemble fini $P\subset \mathbb Z$ de premiers qui ne divisent pas $n$, tel que 
$\#P\geq r+2$. On prend pour $s_1,\ldots, s_{m}$ tous les points ferm\'es de $\mathcal S_p$ pour $p\in P$ de degr\'e au plus $d$ sur $\mathbb F_p$. Notons qu'on n'a qu'un nombre fini de tels points.
 
 \noindent Soit $U$  comme dans l'\'enonc\'e du lemme. Puisque le nombre de places \`a l'infini de $U$ est au plus $r$, et puisque  $\#P\geq r+2$, il existent deux premiers $p_1, p_2 \in P$ tels que les fibres de $U$ au-dessus de $p_1$ et $p_2$ sont non vides, et elles correspondent donc aux points $s_i\in \mathcal S$ au-dessus de $p_1$ et $s_j\in \mathcal S$ au-dessus de $p_2$, pour  certains $1\leq i\neq j\leq m$,  puisque $[k:\mathbb Q]\leq d$.
\end{proof}

\subsection{Exposant de la partie arithmétique}\label{sectionarith}

\subsubsection{Cycles de codimension $2$ et $\mathcal K_2$-cohomologie : rappels}

On rappelle d'abord quelques r\'esultats g\'en\'eraux sur la torsion dans le groupe de Chow des cycles de codimension $2$ et la structure des groupes de $\mathcal K_2$-cohomologie.

\begin{proposition}\label{prch2} 
Soit $X$ une vari\'et\'e projective lisse g\'eometriquement int\`egre sur un corps $k$ de caract\'eristique $0$. Alors il existe des suites exactes :
\begin{enumerate}

\item \scalebox{0.95}{
$0\to H^1(X, \mathcal K_2)\otimes \mathbb Q/\mathbb Z\to NH^3_{\acute{e}t}(X, \mathbb Q/\mathbb Z(2))\to CH^2(X)_{tors}\to 0$,}\\
o\`u $NH^3_{\acute{e}t}(X, \mathbb Q/\mathbb Z(2))=\mathrm{Ker}[H^3_{\acute{e}t}(X, \mathbb Q/\mathbb Z(2))\to  H^3(k(X),  \mathbb Q/\mathbb Z(2))]$;
 \item \scalebox{0.91}{$0\to H^1(X, \mathcal K_2)\otimes \mathbb Q/\mathbb Z \to \Ker\,\tau\to \Ker[CH^2(X)\to CH^2(\bar X)^{G_k}]\to 0$,}\
 
\noindent o\`u $\tau$ est la fl\`eche $\tau:H^3_{\acute{e}t}(X, \mathbb Q/\mathbb Z(2))\to [H^3_{\acute{e}t}(\bar X, \mathbb Q/\mathbb Z(2))^{G_k}\oplus H^3(k(X),  \mathbb Q/\mathbb Z(2))].$

\item \scalebox{0.95}{$0\to \varinjlim_n (\Pic^0_{X/k}(\bar k)[n]\otimes \mu_n) \to H^1_{\acute{e}t}(\bar X, \Q/\Z(2))\to  F_X \to 0$,}\

\noindent o\`u le groupe $F_X$ de droite est fini, de m\^eme ordre que le groupe $\oplus_{\ell} H^2_{\acute{e}t}( \bar X, \mathbb Z_{\ell}(2))\{\ell\}$.
 
\end{enumerate}

Si de plus  $H^2(X,\mathcal O_X)=0$, on a une suite exacte:

\begin{enumerate}\setcounter{enumi}{3}

\item \scalebox{0.95}{$0\to \mathrm{Pic}(\bar X)\otimes \Q/\Z \to H^2_{\acute{e}t}(\bar X, \mathbb Q/\mathbb Z(1)) \to \oplus_{\ell} H^3_{\acute{e}t}(\bar X, \mathbb Z_{\ell}(1))\{\ell\}\to 0$,}\

\noindent et le groupe de droite est fini.
\end{enumerate}
\end{proposition}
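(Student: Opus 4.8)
The plan is to obtain all four sequences from three standard inputs — the Bloch--Ogus theory of the coniveau spectral sequence, the Merkurjev--Suslin theorem, and the Kummer sequence — performing the passage to the limit over $n$ only at the very end. For sequence (1) I would start from the coniveau spectral sequence
$$E_2^{p,q}=H^p_{Zar}(X,\mathcal H^q(\mu_n^{\otimes 2}))\Rightarrow H^{p+q}_{\acute{e}t}(X,\mu_n^{\otimes 2}),$$
where $\mathcal H^q$ is the Zariski sheafification of $U\mapsto H^q_{\acute{e}t}(U,\mu_n^{\otimes 2})$. The Gersten resolution gives the vanishing $H^p_{Zar}(X,\mathcal H^q)=0$ for $p>q$ together with the injection $H^0_{Zar}(X,\mathcal H^q)\hookrightarrow H^q(k(X),\mu_n^{\otimes 2})$, so that in total degree $3$ only the columns $p=0,1$ survive and the filtration identifies the unramified part $NH^3_{\acute{e}t}(X,\mu_n^{\otimes 2})$ with $E_\infty^{1,2}=E_2^{1,2}=H^1_{Zar}(X,\mathcal H^2(\mu_n^{\otimes 2}))$. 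Merkurjev--Suslin yields $\mathcal H^2(\mu_n^{\otimes 2})\cong\mathcal K_2/n$, hence $NH^3_{\acute{e}t}(X,\mu_n^{\otimes 2})\cong H^1(X,\mathcal K_2/n)$; combining the sheaf sequence $0\to\mathcal K_2\xrightarrow{n}\mathcal K_2\to\mathcal K_2/n\to0$ with Bloch's formula $H^2(X,\mathcal K_2)\cong CH^2(X)$ gives
$$0\to H^1(X,\mathcal K_2)/n\to H^1(X,\mathcal K_2/n)\to CH^2(X)[n]\to0,$$
and taking the direct limit over $n$ produces (1).

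For sequence (2) I would run the same computation over $\bar k$, obtaining a $G_k$-equivariant isomorphism $NH^3_{\acute{e}t}(\bar X,\Q/\Z(2))\cong CH^2(\bar X)_{tors}$ once one knows, as in \cite{CTR}, that $H^1(\bar X,\mathcal K_2)$ is divisible, so that its contribution dies after $\otimes\,\Q/\Z$. Restriction to $\bar X$ then gives a morphism from sequence (1) to the $G_k$-invariants of the corresponding sequence for $\bar X$. Identifying $\Ker\tau$ with $\Ker[NH^3(X,\Q/\Z(2))\to NH^3(\bar X,\Q/\Z(2))^{G_k}]$ — using that an unramified class over $X$ stays unramified over $\bar X$, so that vanishing in $H^3(\bar X,\Q/\Z(2))^{G_k}$ amounts to vanishing in $NH^3(\bar X,\Q/\Z(2))^{G_k}$ — the snake lemma yields (2), the right-hand kernel being torsion by a restriction--corestriction argument.

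Sequences (3) and (4) are both instances of the universal-coefficient sequence
$$0\to H^{i}_{\acute{e}t}(\bar X,\Z_\ell(j))\otimes\Q_\ell/\Z_\ell\to H^{i}_{\acute{e}t}(\bar X,\Q_\ell/\Z_\ell(j))\to H^{i+1}_{\acute{e}t}(\bar X,\Z_\ell(j))\{\ell\}\to0$$
already used for Proposition \ref{proposition:fini-coh}, summed over all $\ell$. For (3), with $(i,j)=(1,2)$, Kummer theory identifies $H^1_{\acute{e}t}(\bar X,\Z_\ell(1))\cong T_\ell\Pic^0_{X/k}$, so after the Tate twist the left-hand term becomes $\varinjlim_n(\Pic^0_{X/k}(\bar k)[n]\otimes\mu_n)$; finiteness of $F_X$ follows because $H^2_{\acute{e}t}(\bar X,\Z_\ell(2))$ is a finitely generated $\Z_\ell$-module, torsion-free for all but finitely many $\ell$. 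For (4), with $(i,j)=(2,1)$, the hypothesis $H^2(X,\mathcal O_X)=0$, i.e. $b_2=\rho$, forces the cycle class map to be surjective with $\Q_\ell$-coefficients, so $H^2_{\acute{e}t}(\bar X,\Z_\ell(1))$ modulo torsion is $\NS(\bar X)\otimes\Z_\ell$; tensoring with $\Q_\ell/\Z_\ell$ and summing over $\ell$ gives $\Pic(\bar X)\otimes\Q/\Z$ on the left, since $\Pic^0$ is divisible, and the finite group $\oplus_\ell H^3_{\acute{e}t}(\bar X,\Z_\ell(1))\{\ell\}$ on the right.

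The hard part, I expect, will be (2): one must make the two Bloch--Ogus computations, for $X$ and for $\bar X$, compatible with the Galois action, verify that the direct limits commute with the formation of $\Ker\tau$ and with $G_k$-invariants, and invoke the divisibility of $H^1(\bar X,\mathcal K_2)$. Once these compatibilities are in place, everything reduces — modulo careful bookkeeping of the limits and the twists — to formal consequences of Bloch--Ogus, Merkurjev--Suslin, and the Kummer sequence.
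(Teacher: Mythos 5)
Your sequences (3) and (4) and your argument for (2) are essentially correct, but there is a genuine error in your derivation of (1). The sequence of Zariski sheaves $0\to\mathcal K_2\xrightarrow{n}\mathcal K_2\to\mathcal K_2/n\to0$ is not exact on the left: multiplication by $n$ on $\mathcal K_2$ is \emph{not} injective. Indeed, the stalk of ${}_n\mathcal K_2$ at the generic point is ${}_nK_2(k(X))$, which is nonzero in general: if $\zeta\in\bar k$ is a primitive $n$-th root of unity and $f\in\bar k(X)^*$ has a simple zero, the symbol $\{\zeta,f\}$ is killed by $n$ but nonzero, since its tame symbol at that zero is $\zeta\neq1$ (over $k=\Q$ one can take the $2$-torsion class $\{-1,-1\}\in K_2(\Q(t))$). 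Consequently the long exact cohomology sequence you invoke --- which is what produces $0\to H^1(X,\mathcal K_2)/n\to H^1(X,\mathcal K_2/n)\to CH^2(X)[n]\to0$ in one line --- is not available. That short exact sequence is nevertheless true, but its proof must control the contribution of the nonzero torsion sheaf ${}_n\mathcal K_2$ (equivalently, the difference between the naive and the derived reduction mod $n$ of $\mathcal K_2$); this is precisely the nontrivial $K$-theoretic content of Merkurjev--Suslin as developed by Colliot-Th\'el\`ene--Raskind, and it is why the paper does not reprove (1): its proof of (1) is the citation \cite[(3.11)]{CT-cycles}. So either quote that result, as the paper does, or supply the missing analysis of ${}_n\mathcal K_2$; your passage to the limit over $n$, and (granting (1) over $k$ and over $\bar k$) your snake-lemma derivation of (2), are then fine --- note that (2), which you flagged as the delicate point, is actually the part that works, and it coincides with the paper's own argument (compare (1) for $X$ and $\bar X$ and use $H^1(\bar X,\mathcal K_2)\otimes\Q/\Z=0$, i.e.\ \cite[Thm.\ 2.2]{CTRK2}).

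For (3) and (4) you take a genuinely different route from the paper. The paper argues $K$-theoretically: Suslin's isomorphism $H^0(\bar X,\mathcal K_2)_{tors}\simeq H^1_{\acute{e}t}(\bar X,\Q/\Z(2))$ plus the structure results of \cite{CTRK2} on the maximal divisible subgroup for (3), and \cite[Prop.\ 2.11]{CTRK2} for (4). You instead obtain both purely cohomologically from the coefficient sequence
$$0\to H^{i}_{\acute{e}t}(\bar X,\Z_\ell(j))\otimes\Q_\ell/\Z_\ell\to H^{i}_{\acute{e}t}(\bar X,\Q_\ell/\Z_\ell(j))\to H^{i+1}_{\acute{e}t}(\bar X,\Z_\ell(j))_{tors}\to0$$
together with Kummer theory. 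This works, and even gives slightly more for (3) (an isomorphism $F_X\simeq\oplus_\ell H^2_{\acute{e}t}(\bar X,\Z_\ell(2))\{\ell\}$ rather than an equality of orders); for (4) the step you should make explicit is that $b_2=\rho$ forces $T_\ell\mathrm{Br}(\bar X)=0$, so the Kummer sequence yields an isomorphism $\NS(\bar X)\otimes\Z_\ell\simeq H^2_{\acute{e}t}(\bar X,\Z_\ell(1))$ and not merely a finite-index inclusion. Both of your derivations are functorial in $X$ and Galois-equivariant, which is what the later application in Lemme \ref{leminv2} requires, so this cohomological alternative is perfectly serviceable and bypasses the $K$-theoretic citations.
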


\begin{proof}
Pour la suite (1) voir \cite{CT-cycles} (3.11). Pour obtenir la suite (2) on compare les suites (1) pour $X$ et $\bar X$, et on utilise que $H^1(\bar X, \mathcal K_2)\otimes \mathbb Q/\mathbb Z=0$ d'apr\`es \cite[Thm. 2.2]{CTRK2}. La suite (4) est \cite[Proposition 2.11]{CTRK2}.

Pour la suite (3) on utilise l'isomorphisme $H^0(\bar X,\mathcal K_2)_{tors}\simeq H^1_{\acute{e}t}(\bar X, \Q/\Z(2))$ de Suslin (voir \cite[Corollary 5.3]{Suslin}). Ensuite, \cite[Th\'eor\`eme 1.8 et sa preuve]{CTRK2} identifie le sous-groupe divisible maximal du groupe $H^0(\bar X,\mathcal K_2)_{tors}$ avec le groupe de gauche $\varinjlim_n (\Pic^0_{X/k}(\bar k)[n]\otimes \mu_n)$. Le quotient du groupe $H^0(\bar X,\mathcal K_2)_{tors}$ par son sous-groupe divisible maximal est (non canoniquement) isomorphe \`a $\NS(\bar X)_{tors}$, isomorphe lui-m\^eme au groupe $\oplus_{\ell} H^2( \bar X, \mathbb Z_{\ell}(2))\{\ell\}$ (voir \cite[Remarque 1.9, Lemma 1.4]{CTRK2}).

\end{proof}

\subsubsection{Invariants}\label{subsubsection:invariants} 

Soit $k$ un corps et soit $X$ une variété projective lisse g\'eom\'etriquement int\`egre sur $k$. On peut considérer les invariants suivants de la variété $X$:
\begin{enumerate}[(i)]
\item $d_i(X)$ est le degré minimal d'une extension finie $K$ de $k$ telle que l'ensemble des $K$-points $X(K)$ de $X$ est non vide\footnote{Ici cet invariant convient mieux que l'indice de $X$.};
\item $d_{NS}(X)$ est le degr\'e minimal d'une extension finie $K$ de $k$ telle que l'application naturelle $\Pic(X_K) \to \NS(\bar X)$ est surjective, o\`u $\NS(\bar X)$ est le groupe de Néron-Severi de $\bar X$ -- c'est un groupe abélien de type fini; 
\item Si $(i, j)\in \{(2, 1), (2, 2), (3, 2)\}$, $n_{ij}(X)$ est l'ordre du groupe abélien fini $\bigoplus_{\ell}H^i(\overline X, \mathbb Z_\ell(j))_{tors}$.
\end{enumerate}

Notons que les invariants $n_{i,j}(X)$  sont des invariants cohomologiques, constants dans les familles projectives lisses.
Pour contr\^oler la variaition de $d_i(X)$ et $d_{NS}(X)$ en famille, on dispose des deux lemmes faciles ci-dessous:
\begin{lemma}\label{idansf}
 Soit $\mathcal S$ un sch\'ema s\'epar\'e de type fini sur $\mathbb Z$ et soit $\pi:\mathcal X\to \mathcal S$ un morphisme projectif lisse \`a fibres g\'eom\'etriquement int\`egres.
 Il existe une constante $N=N(\pi)$ telle que pour tout point $s\in \mathcal S$, la vari\'et\'e $\mathcal X_{s}$ sur le corps r\'esiduel $\kappa(s)$ de $s$ v\'erifie: $d_i(\mathcal X_{s})\leq N.$
\end{lemma}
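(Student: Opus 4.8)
Le plan est de proc\'eder par r\'ecurrence noeth\'erienne sur $\mathcal S$, en construisant au-dessus d'un ouvert dense de chaque composante une \emph{multisection} finie localement libre de degr\'e born\'e. On commence par des r\'eductions formelles. L'invariant $d_i(\mathcal X_s)$ ne d\'epend que du point $s$ et de la fibre $\mathcal X_s$, laquelle est ind\'ependante de la structure de sch\'ema de $\mathcal S$ au voisinage de $s$; on peut donc remplacer $\mathcal S$ par $\mathcal S_{red}$, puis, comme $\mathcal S$ n'a qu'un nombre fini de composantes irr\'eductibles et que tout point appartient \`a l'une d'elles, se ramener au cas o\`u $\mathcal S$ est int\`egre. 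La restriction de $\pi$ \`a une telle composante (munie de sa structure r\'eduite) demeure projective lisse \`a fibres g\'eom\'etriquement int\`egres, puisque l'hypoth\`ese ne porte que sur les fibres, elles-m\^emes inchang\'ees.

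Supposons donc $\mathcal S$ int\`egre, de point g\'en\'erique $\eta$ et de corps des fonctions $K_0=\kappa(\eta)$. La fibre g\'en\'erique $\mathcal X_\eta$ est une vari\'et\'e projective g\'eom\'etriquement int\`egre, donc non vide, sur $K_0$; elle poss\`ede un point ferm\'e $P$ dont le corps r\'esiduel $L$ est une extension finie de $K_0$, disons de degr\'e $e$. Le sous-sch\'ema ferm\'e $\Spec L\hookrightarrow \mathcal X_\eta$ s'\'etend, par un argument d'\'etalement (\emph{spreading out}) standard, en un sous-sch\'ema ferm\'e $\mathcal Z\subset \mathcal X_{\mathcal U}$ au-dessus d'un ouvert dense $\mathcal U\subset \mathcal S$, de fibre g\'en\'erique $\Spec L$. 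Comme $\mathcal X_{\mathcal U}\to \mathcal U$ est propre, $\mathcal Z\to \mathcal U$ l'est aussi; sa fibre en $\eta$ \'etant de dimension nulle, la semi-continuit\'e sup\'erieure de la dimension des fibres permet, quitte \`a r\'etr\'ecir $\mathcal U$, de la supposer quasi-finie, donc finie. Enfin, par platitude g\'en\'erique, on r\'etr\'ecit encore $\mathcal U$ afin que $\mathcal Z\to \mathcal U$ soit fini localement libre, n\'ecessairement de rang $e$ puisque ce rang se calcule sur la fibre g\'en\'erique.

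Pour tout $s\in \mathcal U$, la fibre $\mathcal Z_s$ est alors un $\kappa(s)$-sch\'ema fini non vide de longueur $e$; tout point ferm\'e $z$ de $\mathcal Z_s$ fournit un point ferm\'e de $\mathcal X_s$ dont le corps r\'esiduel v\'erifie $[\kappa(z):\kappa(s)]\leq e$, d'o\`u $d_i(\mathcal X_s)\leq e$ sur $\mathcal U$. Le ferm\'e compl\'ementaire $\mathcal S\setminus \mathcal U$ est de dimension strictement inf\'erieure et, muni de sa structure r\'eduite, v\'erifie encore les hypoth\`eses; on lui applique l'hypoth\`ese de r\'ecurrence. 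Par noeth\'erianit\'e le proc\'ed\'e s'arr\^ete apr\`es un nombre fini d'\'etapes, et le maximum des bornes obtenues fournit la constante $N=N(\pi)$ cherch\'ee. La seule subtilit\'e est de nature technique et r\'eside dans le deuxi\`eme paragraphe : il faut s'assurer que l'\'etalement d'un point ferm\'e de la fibre g\'en\'erique produit, apr\`es r\'etr\'ecissement, une multisection \emph{finie et plate} (et pas seulement quasi-finie), afin de contr\^oler uniform\'ement le degr\'e r\'esiduel sur chaque fibre; le reste se r\'eduit \`a une r\'ecurrence noeth\'erienne routini\`ere.
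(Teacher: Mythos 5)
Votre preuve est correcte et suit essentiellement la m\^eme strat\'egie que celle du papier : on \'etale un point ferm\'e de la fibre g\'en\'erique en une multisection quasi-finie (chez vous, finie localement libre de rang $e$) au-dessus d'un ouvert dense, ce qui borne $d_i(\mathcal X_s)$ sur cet ouvert, puis on conclut par r\'ecurrence noeth\'erienne sur le ferm\'e compl\'ementaire. Votre version est simplement plus d\'etaill\'ee (passage de quasi-fini \`a fini plat via propret\'e et platitude g\'en\'erique), ce qui rend explicites la surjectivit\'e sur l'ouvert et la borne $e$ que le papier laisse implicites.
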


\begin{proof}
Soit $\eta$ un point g\'en\'erique de $\mathcal S$, et soit $k(\eta)$ son corps r\'esiduel. Choisissant un point de $\mathcal X_\eta$ à valeurs dans une extension finie $L$ de $k(\eta)$, le morphisme $\Spec L\to \mathcal S$ s'\'etend en un morphisme quasi-fini dominant $f_1: \mathcal S_1\to \mathcal S$, o\`u $\mathcal S_1$ est s\'epar\'e, de type fini sur $\Z$. Par construction, quitte \`a remplacer $\mathcal S_1$ par un ouvert non vide, il existe un morphisme $i_1:\mathcal S_1\to \mathcal X$ tel que $\pi\circ i_1=f_1$. On conclut par r\'ecurrence noeth\'erienne en consid\'erant l'image de $f_1$.
\end{proof}

\begin{lemma}\label{iNSdansf}
Soit $k$ un corps de nombres. Soit $S$ un sch\'ema s\'epar\'e de type fini sur $k$ et soit $\pi:\mathcal X\to S$ un morphisme projectif lisse \`a fibres g\'eom\'etriquement int\`egres, tel que $H^2(\mathcal X_s,\mathcal O_{\mathcal X_s})=0$ pour tout point $s\in S$.
 Il existe une constante $N=N(\pi)$ telle que pour tout point $s\in  S$ on a que  la vari\'et\'e $\mathcal X_{s}$ sur le corps r\'esiduel $\kappa(s)$ de $s$ v\'erifie: $d_{NS}(\mathcal X_{s})\leq N.$
\end{lemma}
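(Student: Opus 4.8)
The plan is to bound $d_{NS}(\mathcal X_s)$ by Noetherian induction on $S$, following the template of Lemma \ref{idansf}. Concretely, I would show that the generic point $\eta$ of each irreducible component of $S_{\mathrm{red}}$ has a dense open neighbourhood $V$, which I may take integral and normal, over which $d_{NS}(\mathcal X_s)$ is bounded by a single constant; since the complement has strictly smaller dimension, the induction then produces a global bound $N(\pi)$ equal to the maximum of the finitely many constants so obtained. For the generic fibre itself, the group $\NS(\mathcal X_{\overline\eta})$ is finitely generated and $\Pic(\mathcal X_{\overline\eta})=\varinjlim_L \Pic(\mathcal X_{\eta,L})$ surjects onto it, so there is a finite extension $L$ of $\kappa(\eta)$, of some degree $d_0$, together with line bundles on $\mathcal X_{\eta,L}$ whose classes generate $\NS(\mathcal X_{\overline\eta})$; this already yields $d_{NS}(\mathcal X_\eta)=d_0<\infty$.

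Next I would spread these data out. After shrinking $V$, the extension $L$ comes from a finite surjective morphism $f\colon W\to V$ of integral normal schemes, which I may take finite locally free of degree $d_0$, and the line bundles above extend to line bundles $\mathcal L_1,\dots,\mathcal L_m$ on $\mathcal X_W=\mathcal X\times_S W$. For any $s\in V$ I would pick a point $w\in W$ over $s$, so that $[\kappa(w):\kappa(s)]\le d_0$, and restrict: the $\mathcal L_i|_{\mathcal X_w}$ give classes in $\Pic(\mathcal X_w)$. To conclude that $d_{NS}(\mathcal X_s)\le d_0$ it then suffices to check that the images of these classes generate $\NS(\mathcal X_{\overline s})$.

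These images are exactly the specializations of the generating classes $[\mathcal L_i]\in\NS(\mathcal X_{\overline\eta})$ under the specialization homomorphism $sp\colon\NS(\mathcal X_{\overline\eta})\to\NS(\mathcal X_{\overline s})$, so the heart of the matter, and the step I expect to be the main obstacle, is to prove that $sp$ is surjective. Here I would use the hypothesis decisively: $H^2(\mathcal X_s,\mathcal O_{\mathcal X_s})=0$ is equivalent to $b_2=\rho$, hence the geometric Brauer group $\mathrm{Br}(\mathcal X_{\overline s})\{\ell\}$ is finite, so that its Tate module vanishes; passing to the inverse limit in the Kummer sequence then gives a cycle class \emph{isomorphism}
$$\NS(\mathcal X_{\overline s})\otimes\Z_\ell\;\xrightarrow{\ \sim\ }\;H^2_{\acute{e}t}(\mathcal X_{\overline s},\Z_\ell(1)),$$
and likewise over the geometric generic point (compare the proof of the sequence (4) in Proposition \ref{prch2}). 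Since $\pi$ is smooth and proper, $R^2\pi_*\Z_\ell(1)$ is lisse on the normal scheme $V$, so the cohomological specialization map $H^2_{\acute{e}t}(\mathcal X_{\overline\eta},\Z_\ell(1))\to H^2_{\acute{e}t}(\mathcal X_{\overline s},\Z_\ell(1))$ is an isomorphism. Compatibility of cycle classes with specialization then identifies $sp\otimes\Z_\ell$ with a composite of these isomorphisms, so it is an isomorphism for every $\ell$; hence $\coker(sp)\otimes\Z_\ell=0$ for all $\ell$, and therefore $\coker(sp)=0$, i.e. $sp$ is surjective. The specialized classes then generate $\NS(\mathcal X_{\overline s})$ and $d_{NS}(\mathcal X_s)\le d_0$, as required. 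The delicate point is precisely this surjectivity of $sp$, where the condition $H^2=0$ is what forbids the Néron–Severi lattice from shrinking under specialization; the spreading out of $f$ and of the $\mathcal L_i$ over a dense open is then routine.
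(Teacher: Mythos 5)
Your proof is correct and is essentially the paper's own argument: spread out generators of $\NS(\mathcal X_{\bar\eta})$ over a quasi-finite dominant cover of bounded degree, prove that the specialization map $\NS(\mathcal X_{\bar\eta})\to\NS(\mathcal X_{\bar s})$ is surjective using the hypothesis $H^2(\mathcal X_s,\mathcal O_{\mathcal X_s})=0$, and conclude by Noetherian induction. The only difference lies in how that surjectivity is justified: you use the Kummer sequence, finiteness of the geometric Brauer group, and smooth proper base change in $\ell$-adic cohomology, whereas the paper identifies both Néron--Severi groups with the singular cohomology $H^2(\cdot,\Z)$ of the base changes to $\C$ and invokes local constancy of this group in a smooth proper family --- two standard renderings of the same key fact.
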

\begin{proof}
L'hypoth\`ese d'annulation des groupes $H^2(\mathcal X_s, \mathcal O_{\mathcal X_s})$ garantit que si $\bar\eta$ est un point g\'en\'erique g\'eom\'etrique de $S$, les applications de sp\'ecialisation
$$\NS(\mathcal X_{\bar\eta})\to \NS(\mathcal X_{\bar s})$$
sont des isomorphismes comme on peut le voir en identifiant ces deux groupes aux groupes de cohomologie singuli\`ere de changements de base \`a $\C$ de $\mathcal X_{\overline s}$ et $\mathcal X_{\bar\eta}$ respectivement.

On peut trouver un sch\'ema $S'$ s\'epar\'e, de type fini sur $k$, $S'\to S$ quasi-fini dominant, et des sections $\{l_i\}_i\in \Pic(\mathcal X_{S'})$ qui engendrent le groupe $\NS(\mathcal X_{\bar{\eta}})$. Par sp\'ecialisation, ces sections engendrent les groupes $\NS(\mathcal X_{\bar{s'}})$ pour tout point $s'$ de $S'$. On conclut par r\'ecurrence noeth\'erienne.
\end{proof}

Outre les invariants ci-dessus, on va aussi considérer des invariants obtenus à partir de diviseurs amples dans $X$. Soit $K$ une extension finie de $k$ telle que $X(K)\neq\emptyset$. On se donne une courbe projective lisse et géométriquement intègre $C$ dans $X_K$, avec $C(K)\neq \emptyset$, intersection complète de diviseurs amples. Le théorème de Bertini garantit l'existence d'une telle courbe. On a alors une application injective de vari\'et\'es ab\'eliennes :
$$\Pic^0_{X_K/K}\to \Pic^0_{C/K}.$$ 
Par le th\'eor\`eme de compl\`ete r\'eductibilit\'e de Poincar\'e \cite[Chap. IV, Theorem 1, p.173]{Mumford}, il existe un morphisme $\Pic^0_{C/K}\to \Pic^0_{X_K/K}$ tel que la composition: $$\tau_C: \Pic^0_{X_K/K}\to \Pic^0_{C/K}\to \Pic^0_{X_K/K}$$ est une isog\'enie.

Dans les arguments ci-dessous, on aura besoin de contr\^oler le cardinal du noyau de $\tau_C$, ainsi que l'invariant $n_{22}(C)$ en famille. Pour ce faire, il est commode d'introduire la notation suivante:

\begin{definition}

On \'ecrit
$d_{h,22}(X)$ pour le plus petit entier qui apparaît comme le produit du cardinal du noyau d'une isogénie $\tau_C$ construite comme ci-dessus, et de $n_{22}(C)$. Plus pr\'ecis\'ement, $d_{h,22}(X)$ est le plus petit produit $d_{h,22}(X)=d\cdot n$, tel qu'il existe une extension finie $K/k$ de degr\'e $d_i(X)$ avec $X(K)\neq \emptyset$, une  courbe lisse $C\subset X_K$ obtenue par sections hyperplanes successives, telle que $C(K)\neq \emptyset$, $n_{22}(C)=n$, et telle que la fl\`eche compos\'ee $$\tau_C: \Pic^0_{X_K/K}\to \Pic^0_{C/K}\to \Pic^0_{X_K/K}$$ est une isogénie dont le noyau est de cardinal $d$.

\end{definition}

\begin{lemma}\label{dhdansf}
Soit $k$ un corps de nombres. Soit $S$ un sch\'ema s\'epar\'e de type fini sur $k$ et soit $\pi:\mathcal X\to S$ un morphisme projectif lisse, \`a fibres g\'eometriquement int\`egres. Il existe une constante $N=N(\pi)$ telle que pour tout point ferm\'e $s\in S$   on a: 
$$d_{h,22}(\mathcal X_{s})\leq N.$$
\end{lemma}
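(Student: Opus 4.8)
Le plan est, pour chaque point fermé $s$, d'exhiber une courbe admissible $C$ dont les deux facteurs $n_{22}(C)$ et $\#\Ker\tau_C$ intervenant dans $d_{h,22}(\mathcal X_s)$ sont bornés indépendamment de $s$. On peut supposer $S$ connexe et la dimension relative $n$ de $\pi$ constante ; le cas $n=0$ étant trivial (les groupes de Picard sont nuls), on suppose $n\geq 1$. Comme $\pi$ est projectif, on fixe une fois pour toutes un faisceau inversible $\mathcal L$ sur $\mathcal X$ relativement très ample : pour tout $s$, le faisceau $\mathcal L_s$ est alors très ample sur $\mathcal X_s$, et le demeure après toute extension du corps résiduel.

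Soit $s$ un point fermé. Par définition de $d_i(\mathcal X_s)$, on choisit une extension finie $K/\kappa(s)$ de degré $d_i(\mathcal X_s)$ et un point $P\in\mathcal X_s(K)$ (le lemme \ref{idansf} majore ce degré, mais cela ne sert pas pour la borne cherchée). Le corps $K$ étant infini, le théorème de Bertini fournit, en prenant l'intersection de $n-1$ hyperplans généraux passant par $P$ dans le plongement défini par $\mathcal L_s$, une courbe $C\subset\mathcal X_{s,K}$ lisse, géométriquement intègre, intersection complète de diviseurs amples, contenant $P$, de sorte que $C(K)\neq\emptyset$ : c'est une courbe admissible pour le calcul de $d_{h,22}$. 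Comme $C$ est une courbe propre et lisse géométriquement connexe, $H^2_{\acute{e}t}(\bar C,\mathbb Z_\ell(2))\simeq\mathbb Z_\ell(1)$ est sans torsion, d'où $n_{22}(C)=1$. Il ne reste donc qu'à borner $\#\Ker\tau_C$.

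Pour cela, rappelons que le morphisme de restriction $\iota^\ast:\Pic^0_{\mathcal X_{s,K}/K}\to\Pic^0_{C/K}$ est injectif (théorème de Lefschetz pour une intersection complète ample). La polarisation principale de la jacobienne $\Pic^0_{C/K}$ induit, via $\iota^\ast$, une polarisation $\lambda_C$ sur $\Pic^0_{\mathcal X_{s,K}/K}$ ; par la formule de projection, $\lambda_C$ est induite par le cup-produit avec la classe $c_1(\mathcal L_s)^{n-1}$ sur $H^1_{\acute{e}t}(\mathcal X_{\bar s},\mathbb Z_\ell)$, accouplement non dégénéré d'après le théorème de Lefschetz difficile. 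Pour le meilleur scindage fourni par le théorème de complète réductibilité de Poincaré, $\#\Ker\tau_C$ est déterminé par le type de $\lambda_C$. Or celui-ci ne dépend que de la classe $c_1(\mathcal L_s)^{n-1}$ et de la cohomologie de $\mathcal X_{\bar s}$ : comme $c_1(\mathcal L)$ provient d'une classe globale sur la famille lisse et propre, $\lambda_C$ s'interprète comme un morphisme du système local $R^1\pi_\ast\mathbb Z_\ell$ dans son dual, de type localement constant, donc constant sur $S$ connexe. On obtient une constante $D=D(\pi)$ telle que $\#\Ker\tau_C\leq D$, d'où $d_{h,22}(\mathcal X_s)\leq D$.

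L'étape délicate est précisément cette dernière constance. Il faut contrôler que $\#\Ker\tau_C$ ne dépend ni du choix de la courbe générale $C$, ni de $K$, ni de $s$, mais seulement de la classe numérique $c_1(\mathcal L)^{n-1}$ : l'indépendance en $C$ et en $K$ résulte de la formule de projection, qui exprime $\lambda_C$ à l'aide de la seule classe $c_1(\mathcal L_s)^{n-1}$, et de l'invariance du degré d'une isogénie par extension des scalaires ; l'indépendance en $s$ provient de la constance, sur la base connexe, du système local $R^1\pi_\ast\mathbb Z_\ell$ muni de l'action de la classe globale $c_1(\mathcal L)$. Le recours à un faisceau relativement très ample global rend par ailleurs la construction de $C$ uniforme et évite toute récurrence noethérienne, contrairement aux preuves des lemmes \ref{idansf} et \ref{iNSdansf}.
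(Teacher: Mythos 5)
Your proof is correct in substance but takes a genuinely different route from the paper's. The paper argues by récurrence noethérienne on the base: it constructs the curve, a rational point, and the two maps $\Pic^0_{\mathcal X_{\eta}}\to\Pic^0_{\mathcal C_{\eta}}\to\Pic^0_{\mathcal X_{\eta}}$ at the generic point only, spreads this whole configuration out over a dense open after a quasi-finite dominant base change (using compatibility of the Picard schemes with base change), bounds the kernel of each specialized isogeny by the kernel of the generic one, and handles the complement by induction. You instead work fiber by fiber: Bertini through a $K$-point produces the curve over each closed point, and the kernel is controlled not by specialization of a fixed isogeny but by the type of the induced polarization $\lambda_C$, which on Tate modules is cup product with $c_1(\mathcal L_s)^{n-1}$, a pairing of lisse sheaves whose elementary divisors are constant on a connected base. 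Your route buys uniformity (no induction, no shrinking or quasi-finite covers) and an in-principle explicit bound computable from the generic fiber, plus the pleasant observation that $n_{22}(C)=1$ for any curve, so only the kernel term matters; the paper's route buys softness, as it never needs any quantitative theory of polarizations.

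One step, however, is asserted where it should be proved: \og pour le meilleur scindage fourni par le théorème de complète réductibilité de Poincaré, $\#\Ker\tau_C$ est déterminé par le type de $\lambda_C$\fg{} is not the statement of Poincaré reducibility. The fix is short and you should include it. Write $A=\Pic^0_{\mathcal X_{s,K}/K}$, $\iota^*:A\hookrightarrow \Jac(C)$, $\lambda_C=\widehat{\iota^*}\circ\lambda_\Theta\circ\iota^*$, and let $e$ be the exponent of $\Ker\lambda_C$. Since $\Ker\lambda_C\subset A[e]$ and $\lambda_C$ is surjective, multiplication by $e$ factors as $[e]_A=\mu\circ\lambda_C$ for a homomorphism $\mu$ from the dual of $A$ to $A$, defined over $K$. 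Taking the retraction $\Jac(C)\to A$ to be $\mu\circ\widehat{\iota^*}\circ\lambda_\Theta$, the composite $\tau_C$ equals $[e]_A$, whose kernel has cardinality $e^{2g}$, with $g=\dim A$ constant in the family and $e$ determined by the elementary divisors of the cup-product Gram matrices, hence constant on connected $S$. With this inserted, your argument is complete. Two minor remarks: hard Lefschetz is not needed, since the pullback of an ample polarization along the injection $\iota^*$ is automatically a polarization; and the case of relative dimension $0$ is not trivial but vacuous (no curve $C$ exists), an edge case the paper's proof also leaves implicit.
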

\begin{proof}
Comme dans le lemme \ref{idansf}, on va utiliser un argument de r\'ecurrence noeth\'erienne sur le sch\'ema de base $S$. Il suffit donc de trouver une borne $N$ apr\`es un changement de base quasi-fini dominant $S'\to S$. En particulier, on peut supposer que $\pi$ a une section et que $S$ est int\`egre de point g\'en\'erique $\eta$.

Soit $\mathcal C_{\eta}$ une courbe lisse, obtenue par des sections hyperplanes successives de la fibre g\'en\'erique $\mathcal X_{\eta}$. Quitte \`a remplacer $S$ par un sch\'ema quasi-fini sur $S$, on peut supposer que $\mathcal C_{\eta}(\eta)$ est non vide et que $\mathcal C_{\eta}$ s'\'etend en une courbe relative lisse $\mathcal C\subset \mathcal X\to S$ intersection compl\`ete de sections hyperplanes relatives de $\mathcal X$, qui admet une section sur $S$.  L'application $\Pic^0_{\mathcal X_{\eta}}\to \Pic^0_{\mathcal C_{\eta}}$ est injective. D'apr\`es le th\'eor\`eme de compl\`ete r\'eductibilit\'e de Poincar\'e \cite[Chap. IV, Thm.1, p. 173]{Mumford}, il existe une application $\Pic^0_{\mathcal C_{\eta}}\to \Pic^0_{\mathcal X_{\eta}}$ telle que l'application compos\'ee
\begin{equation}\label{isogen} 
\Pic^0_{\mathcal X_{\eta}}\to \Pic^0_{\mathcal C_{\eta}}\to \Pic^0_{\mathcal X_{\eta}}
\end{equation}
est une isog\'enie. Quitte \`a remplacer $S$ par un ouvert non-vide, on peut supposer que la construction ci-dessus s'\'etend sur $S$. Ainsi, en utilisant la compatibilit\'e des sch\'emas $\Pic_{\mathcal X/S}$ et $\Pic_{\mathcal C/S}$ au changement de base, on peut supposer que pour tout point $s$ de $S$, on a une isog\'enie
$$\Pic^0_{\mathcal X_{s}}\to \Pic^0_{\mathcal C_{s}}\to \Pic^0_{\mathcal X_{s}}$$
 dont le cardinal du noyau est born\'e par le cardinal du noyau de l'isog\'enie g\'en\'erique (\ref{isogen}). Puisque  $n_{22}(\mathcal C_s)$ est constant dans la famille $\mathcal C/S$, le lemme est d\'emontr\'e. 
\end{proof}

\subsubsection{Une version effective d'un résultat de Colliot-Th\'el\`ene--Raskind--Salberger}

Soit $X$ une vari\'et\'e projective, lisse, g\'eom\'etriquement int\`egre, d\'efinie sur un corps de nombres $k$, telle que  $H^2(X, \mathcal O_X)=0$.  La finitude du groupe $$Ker[CH^2(X)\to CH^2(\bar X)^{G_k}]$$ est d\'emontr\'ee par J.-L. Colliot-Th\'el\`ene - W. Raskind - P. Salberger (\cite[Th\'eor\`eme 4.3]{CTR}, voir aussi \cite[Th\'eor\`eme 9.1]{CT-cycles}). 

Dans les paragraphes qui suivent, nous reprenons la preuve de \cite{CTR} afin d'en d\'egager une version effective et de donner une borne sur le noyau ci-dessus qui ne d\'epende que des invariants de la section \ref{subsubsection:invariants} ci-dessus. Les arguments ci-dessous sont repris de \cite{CTR} et y apparaissent int\'egralement. Dans le cas   $H^1(X, \mathcal O_X)=H^2(X, \mathcal O_X)=0$, une borne explicite est donn\'ee dans \cite[Th\'eor\`eme A.1]{KCT}.

\begin{theorem}\label{exposant2-k}
Le groupe $Ker[CH^2(X)\to CH^2(\bar X)^{G_k}]$  est d'exposant fini $N$,  born\'e par le degr\'e de $k$ et les invariants décrits en section \ref{subsubsection:invariants}: $d_i(X), d_{NS}(X)$, $n_{ij}(X)$, et $d_{h,22}(X)$.
\end{theorem}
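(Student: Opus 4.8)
Le plan est de reprendre pas à pas la démonstration de finitude de \cite{CTR} et d'en extraire, à chaque invocation d'un énoncé de finitude, une borne effective sur l'exposant. Le point de départ est la suite exacte (2) de la Proposition \ref{prch2} : le groupe $\Ker[CH^2(X)\to CH^2(\bar X)^{G_k}]$ est le conoyau de l'inclusion du groupe divisible $H^1(X,\mathcal K_2)\otimes\mathbb Q/\mathbb Z$ dans $\Ker\,\tau$. Ce groupe étant fini d'après \cite{CTR}, son exposant coïncide avec l'exposant de la partie réduite (le quotient par le sous-groupe divisible maximal) de $\Ker\,\tau$. Il suffit donc de borner l'exposant de cette partie réduite, et pour cela celui de $\Ker[H^3_{\acute{e}t}(X,\mathbb Q/\mathbb Z(2))\to H^3_{\acute{e}t}(\bar X,\mathbb Q/\mathbb Z(2))^{G_k}]$ qui le contient.

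Ce dernier noyau s'analyse via la suite spectrale de Hochschild--Serre : il est filtré par des sous-quotients des groupes de cohomologie galoisienne $H^p(k,H^{3-p}_{\acute{e}t}(\bar X,\mathbb Q/\mathbb Z(2)))$ pour $p=1,2,3$, de sorte que l'exposant de sa partie réduite est majoré par le produit des exposants des parties réduites de ces trois groupes. Le choix d'un point de $X$ de degré $d_i(X)$ sur $k$ fournit une section du morphisme structural après extension finie, ce qui scinde les applications d'arête et contrôle les différentielles de la suite spectrale ; c'est ici qu'intervient l'invariant $d_i(X)$. Pour l'étage $p=1$, l'hypothèse $H^2(X,\mathcal O_X)=0$ et la suite (4) de la Proposition \ref{prch2} identifient la partie divisible de $H^2_{\acute{e}t}(\bar X,\mathbb Q/\mathbb Z(2))$ à $\NS(\bar X)\otimes\mathbb Q/\mathbb Z(1)$, le quotient fini étant d'ordre $n_{31}(X)$. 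Sur l'extension de degré $d_{NS}(X)$ qui trivialise l'action de $G_k$ sur $\NS(\bar X)$, ce module devient une somme de copies de $\mathbb Q/\mathbb Z(1)$, dont le $H^1$ galoisien est divisible ; un argument de restriction-corestriction montre alors que la partie réduite de $H^1(k,\NS(\bar X)\otimes\mathbb Q/\mathbb Z(1))$ est annulée par $d_{NS}(X)$, le quotient fini contribuant un exposant au plus $n_{31}(X)$. Pour $p=3$, le groupe $H^3(k,\mathbb Q/\mathbb Z(2))$ est $2$-primaire, concentré aux places réelles, d'exposant au plus $2$, donc contrôlé par le degré de $k$.

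Le cas $p=2$ est le cœur de la preuve. La suite (3) de la Proposition \ref{prch2} identifie la partie divisible de $H^1_{\acute{e}t}(\bar X,\mathbb Q/\mathbb Z(2))$ à $\varinjlim_n(\Pic^0_{X/k}(\bar k)[n]\otimes\mu_n)$, le quotient fini étant d'ordre $n_{22}(X)$. Le théorème de complète réductibilité de Poincaré, appliqué à une courbe $C\subset X_K$ intersection complète de diviseurs amples avec $C(K)\neq\emptyset$, fournit une isogénie $\tau_C:\Pic^0_{X_K/K}\to\Pic^0_{X_K/K}$ se factorisant par la jacobienne de $C$ ; le cardinal de son noyau et l'invariant $n_{22}(C)$ sont précisément encapsulés dans $d_{h,22}(X)$, tandis que le degré $d_i(X)$ de $K$ intervient de nouveau par restriction-corestriction. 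On se ramène ainsi au module de Tate tordu de la jacobienne de $C$, qui par autodualité s'identifie à $H^1_{\acute{e}t}(\bar C,\mathbb Z_\ell(1))$. La dimension cohomologique $2$ du corps de nombres $k$ (pour $\ell$ impair) annule $H^3(k,T_\ell\Jac(C)(1))$, ce qui force la partie réduite de $H^2(k,\varinjlim_n\Jac(C)[\ell^n]\otimes\mu_{\ell^n})$ à s'annuler pour $\ell$ impair ; seule subsiste une contribution $2$-primaire aux places réelles, d'exposant borné par le degré de $k$.

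Le principal obstacle est précisément de rendre uniforme et effective cette finitude galoisienne à l'étage $p=2$ : il faut suivre avec soin la double torsion de Tate à travers l'isogénie $\tau_C$, contrôler les différentielles de Hochschild--Serre à l'aide du point rationnel, et vérifier que les passages aux limites inductives et projectives préservent les bornes sur les exposants. En rassemblant les contributions des trois étages et du quotient par le groupe divisible $H^1(X,\mathcal K_2)\otimes\mathbb Q/\mathbb Z$, on obtient une borne sur l'exposant de $\Ker[CH^2(X)\to CH^2(\bar X)^{G_k}]$ ne dépendant que du degré de $k$ et des invariants $d_i(X)$, $d_{NS}(X)$, $n_{ij}(X)$ et $d_{h,22}(X)$.
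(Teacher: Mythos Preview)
Il y a un vrai trou dans votre argument, situé précisément à la phrase \og et pour cela celui de $\Ker[H^3_{\acute{e}t}(X,\mathbb Q/\mathbb Z(2))\to H^3_{\acute{e}t}(\bar X,\mathbb Q/\mathbb Z(2))^{G_k}]$ qui le contient\fg. Le passage de $\Ker\tau$ au groupe plus gros $G:=\Ker[H^3_{\acute{e}t}(X,\mathbb Q/\mathbb Z(2))\to H^3_{\acute{e}t}(\bar X,\mathbb Q/\mathbb Z(2))^{G_k}]$ n'est pas licite : la partie réduite n'est pas monotone par inclusion. Par exemple $\mathbb Z/p\subset \mathbb Q_p/\mathbb Z_p$ a une partie réduite d'exposant $p$ alors que celle du groupe ambiant est nulle. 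Le même problème se reproduit au niveau de la filtration : les gradués de $G$ (ou de $\Ker\tau$) ne sont que des \emph{sous-groupes} des $E_2^{p,3-p}=H^p(k,H^{3-p}_{\acute{e}t}(\bar X,\mathbb Q/\mathbb Z(2)))$, et borner la partie réduite de $E_2^{p,3-p}$ ne borne pas celle d'un sous-groupe arbitraire. Vos calculs de $\mathrm{red}(E_2^{p,3-p})$ sont corrects (et l'argument de dimension cohomologique pour $p=2$ marche d'ailleurs directement pour $\Pic^0_X$ sans passer par $C$), mais ils ne suffisent pas à conclure.

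Ce qui manque, et ce que fait le papier, c'est de suivre l'\emph{image} du groupe divisible $A=H^1(X,\mathcal K_2)\otimes\mathbb Q/\mathbb Z$ à travers la filtration, et de montrer qu'elle épuise essentiellement les parties divisibles des gradués. Pour le gradué $H^1(k,H^2_{\acute{e}t}(\bar X,\mathbb Q/\mathbb Z(2)))$, c'est \cite[Th\'eor\`eme 7.3]{CT-cycles} qui donne que $\coker(\phi)$ est tué par $n_{31}(X)$ : sans cet ingrédient, rien ne dit que la partie divisible de ce $H^1$ est dans $\phi(A)$. Pour le gradué $F_1(X)$, le papier utilise de façon essentielle la condition $NH^3$ (le second facteur de $\tau$, que vous n'utilisez jamais) : par restriction à la courbe $C$ et le résultat de Raskind \cite[Thm.~3.7]{R3}, l'image de $F_1(X)\cap NH^3$ dans $H^2(k,H^1_{\acute{e}t}(\bar C,\mathbb Q/\mathbb Z(2)))$ est nulle, ce qui ramène au Lemme~\ref{leminv2}. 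C'est là la vraie raison d'être de la courbe $C$ et de l'invariant $d_{h,22}(X)$, et non le détour par $\Jac(C)$ que vous proposez.
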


\begin{proof}

Par un argument de restriction-corestriction, quitte \`a multiplier l'exposant $N$  par $d_i(X)$, $d_{NS}(X)$ et $d_{h,22}(X)$, on peut supposer que $X(k)\neq \emptyset$, que $\NS(X)=\NS(\bar X)$, et qu'on a une courbe lisse $C\subset X$ intersection compl\`ete de sections hyperplanes, telle que $C(k)\neq \emptyset$ et telle que la fl\`eche compos\'ee $\Pic^0_{X/k}\to \Pic^0_{C/k}\to \Pic^0_{X/k}$ est une isogenie dont le noyau est de cardinal $d$, et qu'on a $d_{h,22}(X)=dn_{22}(C)$.

\noindent Dans les notations de la Proposition \ref{prch2}(2)  il suffit de montrer que le groupe
$$
I=\mathrm{coker}\,[H^1(X, \mathcal K_2)\otimes \mathbb Q/\mathbb Z\stackrel{\psi}{\to} \\
 \Ker\,\tau]
$$
est d'exposant fini, born\'e par les invariants de la section \ref{subsubsection:invariants}.

\noindent Pour ce faire, J.-L. Colliot-Th\'el\`ene et W. Raskind \cite{CTR}
utilisent la suite spectrale de 
 Hochschild-Serre qui donne une filtration
  sur le groupe $\Ker[H^3_{\acute{e}t}(X, \mathbb Q/\mathbb Z(2))\to H^3_{\acute{e}t}(\bar X, \mathbb Q/\mathbb Z(2))^{G_k}]$: on obtient une suite exacte
  \vspace{-0.2cm}
   
$$
 0\to F_1(X)\to \Ker\, [H^3_{\acute{e}t}(X, \mathbb Q/\mathbb Z(2))\to H^3_{\acute{e}t}(\bar X, \mathbb Q/\mathbb Z(2))^{G_k}] \stackrel{\psi}{\to} H^1(k, H^2_{\acute{e}t}(\bar X, \mathbb Q/\mathbb Z(2)).
$$

\noindent Par composition avec $\psi$, on obtient ainsi l'application $$H^1(X, \mathcal K_2)\otimes \mathbb Q/\mathbb Z\stackrel{\phi}{\to} H^1(k, H^2_{\acute{e}t}(\bar X, \mathbb Q/\mathbb Z(2))).$$

\noindent Il suffit donc de borner les exposants des groupes $\mathrm{coker}\,(\phi)$ et $F_1(X)\cap NH^3_{\acute{e}t}(X, \mathbb Q/\mathbb Z(2))$:

\begin{enumerate}
\item 
 
D'apr\`es \cite[Th\'eor\`eme 7.3]{CT-cycles} et sa preuve, on dispose d'une inclusion  
\begin{equation}\label{cokerphi}
\mathrm{coker}\,(\phi)\subset \mathrm{coker}\,[H^1(k, \NS(\bar X)\otimes \Q/\Z(1))\to H^1(k, H^2_{\acute{e}t}(\bar X, \Q/\Z(2)))],
\end{equation}
o\`u la fl\`eche de droite est induite par la fl\`eche
$$\NS(\bar X)\otimes \Q/\Z=\Pic(\bar X)\otimes \Q/\Z\to H^2_{\acute{e}t}(\bar X, \Q/\Z(1)).$$
de la proposition \ref{prch2}(4). L'exposant du groupe de droite dans (\ref{cokerphi}) est donc born\'e par l'exposant du groupe $\oplus H^3_{\acute{e}t}(\bar X, \mathbb Z_{\ell}(1))_{tors}$. Ainsi l'exposant du groupe $\mathrm{coker}\,(\phi)$ est born\'e  par la constante $n_{31}(X)$.

\item 
Pour le groupe $F_1(X)$, on utilise la courbe $C$ d\'efinie au d\'ebut de la preuve. Soit $F_1(C)$ la partie de la filtration sur $H^3_{\acute{e}t}(C, \Q/\Z(2))$ d\'efinie de la m\^eme mani\`ere que $F_1(X)$.  Par fonctorialit\'e, tout \'el\'ement du groupe $F_1(X)\cap NH^3_{\acute{e}t}(X, \Q/\Z(2))$ se restreint \`a un \'element de $F_1(C)$ dans $F_1(C)\cap NH^3_{\acute{e}t}(C, \Q/\Z(2))$.
On a une suite exacte
$$H^3(k, \mathbb Q/\mathbb Z(2))\to F_1(*)\to H^2(k, H^1_{\acute{e}t}(\bar *, \mathbb Q/\mathbb Z(2)))$$
pour $*=C,X$.

\noindent Dans le cas des courbes,  l'hypoth\`ese $C(k)\neq \emptyset$ implique que le noyau de la fl\`eche naturelle
$$H^2(k, H^1_{\acute{e}t}(\bar C, \mathbb Q/\mathbb Z(2)))\to H^2(k, H^1_{\acute{e}t}(\bar k(C), \mathbb Q/\mathbb Z(2)))$$
est nul \cite[Thm. 3.7 et sa preuve]{R3}. Ainsi, l'image du groupe $F_1(X)\cap NH^3_{\acute{e}t}(X, \Q/\Z(2))$ dans  $H^2(k, H^1_{\acute{e}t}(\bar C, \mathbb Q/\mathbb Z(2)))$ est nulle.
En utilisant l'\'egalit\'e $H^3(k, \mathbb Q/\mathbb Z(2))=(\mathbb Z/2)^r$ o\`u $r$ est le nombre de places r\'eelles de $k$, inf\'erieur au degr\'e de $k$, on se ram\`ene \`a borner l'exposant du noyau de la fl\`eche  de restriction 
$$H^2(k, H^1_{\acute{e}t}(\bar X, \mathbb Q/\mathbb Z(2)))\to H^2(k, H^1_{\acute{e}t}(\bar C, \mathbb Q/\mathbb Z(2))).$$ 
C'est l'objet du Lemme \ref{leminv2} ci-dessous, qui conclut la preuve.  
\end{enumerate}
\end{proof}

\begin{lemma}\label{leminv2}
Soit $X$ une vari\'et\'e projective lisse g\'eom\'etriquement int\`egre d\'efinie sur un corps de nombres $k$ et soit $C\subset X$ une courbe, section hyperplane lisse de $X$, telle que $C(k)\neq \emptyset$. Supposons que l'on a un morphisme $\Pic^0_{C/k}\to \Pic^0_{X/k}$ tel que la fl\`eche compos\'ee
$\Pic^0_{X/k}\to \Pic^0_{C/k}\to \Pic^0_{X/k}$ est une isog\'enie dont le noyau est d'exposant  $d$.
Alors le groupe 
$$\Ker [H^2(k, H^1_{\acute{e}t}(\bar X, \mathbb Q/\mathbb Z(2)))\to H^2(k, H^1_{\acute{e}t}(\bar C, \mathbb Q/\mathbb Z(2)))]$$ est d'exposant fini, et cet exposant ne d\'epend que de $d$, de $n_{22}(X)$ et de $n_{22}(C)$.
\end{lemma}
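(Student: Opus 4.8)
Le plan est de se ramener aux parties divisibles des deux groupes de cohomologie, o\`u la donn\'ee de l'isog\'enie $\tau_C$ fournit une factorisation explicite de la multiplication par $d$. Posons $H_X=H^1_{\acute{e}t}(\bar X, \Q/\Z(2))$ et $H_C=H^1_{\acute{e}t}(\bar C, \Q/\Z(2))$, et notons $\rho:H_X\to H_C$ la restriction, $G_k$-\'equivariante. D'apr\`es la Proposition \ref{prch2}(3) appliqu\'ee \`a $X$ puis \`a $C$, on dispose de suites exactes de $G_k$-modules
$$0\to T_X\to H_X\to F_X\to 0,\qquad 0\to T_C\to H_C\to F_C\to 0,$$
o\`u $T_X=\varinjlim_n(\Pic^0_{X/k}(\bar k)[n]\otimes\mu_n)$, de m\^eme pour $T_C$, et o\`u $F_X,F_C$ sont finis d'ordres respectifs $n_{22}(X)$ et $n_{22}(C)$. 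Par naturalit\'e de la suite (3) pour la restriction \`a $C$, l'application $\rho$ envoie $T_X$ dans $T_C$ et y induit l'application $F:T_X\to T_C$ d\'eduite du tir\'e en arri\`ere $f:\Pic^0_{X/k}\to\Pic^0_{C/k}$.

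On utiliserait ensuite la compl\`ete r\'eductibilit\'e de Poincar\'e sous la forme d\'ej\`a employ\'ee: par hypoth\`ese, on a $g:\Pic^0_{C/k}\to\Pic^0_{X/k}$ tel que $\phi:=g\circ f$ soit une isog\'enie de noyau d'exposant $d$, c'est-\`a-dire $\Ker\phi\subset\Pic^0_{X/k}[d]=\Ker[d]$. La propri\'et\'e de factorisation des isog\'enies fournit alors $\psi:\Pic^0_{X/k}\to\Pic^0_{X/k}$ avec $\psi\circ\phi=[d]$. Chacun des homomorphismes $f,g,\psi$ induit sur les torsions tordues \`a la Tate des applications $G_k$-\'equivariantes $F:T_X\to T_C$, $G:T_C\to T_X$, $\Psi:T_X\to T_X$, et la relation $\psi\circ g\circ f=[d]$ donne $\Psi\circ G\circ F=d\cdot\mathrm{id}_{T_X}$, puisque $[d]$ agit par multiplication par $d$ sur chaque $\Pic^0_{X/k}(\bar k)[n]$. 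Le point crucial est qu'on n'a besoin de $g$ et de $\psi$ que sur ces parties divisibles: aucune r\'ealisation de $g$ par une correspondance alg\'ebrique n'est requise, ce qui \'evite toute subtilit\'e.

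Il resterait la chasse au diagramme en cohomologie galoisienne. Comme $F_X$ et $F_C$ sont tu\'es par $n_{22}(X)$ et $n_{22}(C)$, les suites exactes longues associ\'ees aux deux suites ci-dessus montrent que $n_{22}(X)\,H^2(k,H_X)\subset\mathrm{Im}[\iota_*:H^2(k,T_X)\to H^2(k,H_X)]$ et que $\Ker[\iota_{C*}:H^2(k,T_C)\to H^2(k,H_C)]$ est tu\'e par $n_{22}(C)$. Soit $\alpha\in H^2(k,H_X)$ avec $\rho_*\alpha=0$. On \'ecrit $n_{22}(X)\,\alpha=\iota_*(\tilde\alpha)$ avec $\tilde\alpha\in H^2(k,T_X)$; en appliquant $\rho_*$ et la compatibilit\'e $\rho\circ\iota=\iota_C\circ F$, on obtient $\iota_{C*}(F_*\tilde\alpha)=0$, d'o\`u $n_{22}(C)\,F_*\tilde\alpha=0$. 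En composant avec $\Psi_*\circ G_*$ et en invoquant $\Psi_*G_*F_*=d\cdot\mathrm{id}$, il vient $d\,n_{22}(C)\,\tilde\alpha=0$, puis $d\,n_{22}(C)\,n_{22}(X)\,\alpha=\iota_*(d\,n_{22}(C)\,\tilde\alpha)=0$. L'exposant cherch\'e divise donc $d\cdot n_{22}(X)\cdot n_{22}(C)$, ce qui conclut. La seule difficult\'e technique est de justifier proprement la fonctorialit\'e de la suite (3) vis-\`a-vis de la restriction, garantissant $\rho(T_X)\subset T_C$ et $\rho|_{T_X}=F$; cela d\'ecoule de l'identification de Suslin et de sa naturalit\'e telles qu'utilis\'ees dans \cite{CTRK2}.
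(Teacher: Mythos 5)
Your proof is correct and takes essentially the same route as the paper's: the same two exact sequences from Proposition \ref{prch2}(3) for $X$ and $C$, the same Galois-cohomology diagram chase, and the same resulting bound $d\cdot n_{22}(X)\cdot n_{22}(C)$. The only cosmetic difference is how the isogeny hypothesis enters: you factor $[d]=\psi\circ\phi$ and use $\Psi_*\circ G_*\circ F_*=d\cdot\mathrm{id}$ on $H^2(k,T_X)$, whereas the paper observes that the kernel $K$ of the surjective composite $T_X\to T_C\to T_X$ is killed by $d$, so that the kernel of the induced map on $H^2$ is killed by $d$ via the long exact sequence; the two arguments are interchangeable.
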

\begin{proof} La Proposition \ref{prch2}(3) appliqu\'ee \`a $X$ et \`a $C$ donne le diagramme commutatif suivant:

$$
\xymatrix{
0\ar[r]& \varinjlim_n (\Pic^0_{X/k}(\bar k)[n]\otimes \mu_n) \ar[r]\ar[d]& H^1_{\acute{e}t}(\bar X, \Q/\Z(2))\ar[r]\ar[d]&  F_X \ar[r]\ar[d]& 0\\
0\ar[r]& \varinjlim_n (\Pic^0_{C/k}(\bar k)[n]\otimes \mu_n) \ar[r]& H^1_{\acute{e}t}(\bar C, \Q/\Z(2))\ar[r]&  F_C \ar[r]& 0.
}
$$

Prenant les suites exactes longues de cohomologie, on trouve le diagramme commutatif suivant :

$$
\xymatrix{
H^1(k,F_X)\ar[r]\ar[d]& H^2(k,\varinjlim_n (\Pic^0_{X/k}(\bar k)[n]\otimes \mu_n)) \ar[r]\ar^{\iota}[d]& H^2(k,H^1_{\acute{e}t}(\bar X, \Q/\Z(2)))\ar[r]\ar[d]&  H^2(k,F_X)\ar[d]\\
H^1(k,F_C)\ar[r]& H^2(k,\varinjlim_n (\Pic^0_{C/k}(\bar k)[n]\otimes \mu_n)) \ar[r]& H^2(k,H^1_{\acute{e}t}(\bar C, \Q/\Z(2)))\ar[r]&  H^2(k,F_C).
}
$$

Par chasse au diagramme on obtient que  l'exposant du groupe $$\Ker [H^2(k, H^1_{\acute{e}t}(\bar X, \mathbb Q/\mathbb Z(2)))\to H^2(k, H^1_{\acute{e}t}(\bar C, \mathbb Q/\mathbb Z(2)))]$$ de l'\'enonc\'e est born\'e par les exposants des groupes $H^2(k,F_X)$, $H^2(k,F_C)$, et $\Ker \iota$.  L'exposant du premier groupe est born\'e par $n_{22}(X)$ puisque le groupe fini $F_X$ est de m\^eme cardinal que $\oplus_{\ell}H^2_{\acute{e}t}( \bar X, \mathbb Z_{\ell}(2))\{\ell\}$). L'exposant du deuxi\`eme est born\'e par $n_{22}(C)$ par la m\^eme raison.

Pour borner l'exposant du troisi\`eme groupe, soit $K$ le noyau de la surjection compos\'ee
$$\varinjlim_n (\Pic^0_{X/k}(\bar k)[n]\otimes \mu_n)\to \varinjlim_n (\Pic^0_{C/k}(\bar k)[n]\otimes \mu_n)\to \varinjlim_n (\Pic^0_{X/k}(\bar k)[n]\otimes \mu_n).$$
Alors $K$ est tu\'e par $d$ par hypoth\`ese, ce qui implique que le noyau de la compos\'ee $$H^2(k,\varinjlim_n (\Pic^0_{X/k}(\bar k)[n]\otimes \mu_n))\to H^2(k,\varinjlim_n (\Pic^0_{C/k}(\bar k)[n]\otimes \mu_n))\to H^2(k,\varinjlim_n (\Pic^0_{X/k}(\bar k)[n]\otimes \mu_n))$$ est tu\'e par $d$. Cela termine la preuve du lemme.

\end{proof}

\subsubsection{Bornes uniformes pour la partie arithm\'etique}

L'\'enonc\'e suivant donne une borne uniforme pour l'exposant de la partie arithm\'etique $Ker[ CH^2(\mathcal X_s)\to CH^2({\mathcal X}_{\bar s})]$ du groupe $CH^2(\mathcal X_s)_{tors}$.

\begin{corollary}\label{exposant2}
Soit $k$ un corps de nombres. Soit $S$ un sch\'ema s\'epar\'e de type fini sur $k$ et soit $\pi:\mathcal X\to S$ un morphisme projectif lisse, \`a fibres g\'eometriquement int\`egres. Supposons  que $H^2(\mathcal X_s,\mathcal O_{\mathcal X_s})=0$ pour tout point $s\in S$.
Soit $d$ un entier positif. Il existe une constante $N=N(\pi,d)$, qui ne d\'epend que de $d$ et de $\pi$ et qui v\'erifie la propri\'et\'e suivante: si $K$ est une extension de $k$ de degr\'e au plus $d$ et $s$ un $K$-point de $S$, alors:
$$N\cdot Ker[ CH^2(\mathcal X_s)\to CH^2(\mathcal X_{\bar s})]=0.$$ 
\end{corollary}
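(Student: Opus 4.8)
The plan is to combine the effective bound from Theorem \ref{exposant2-k} with the family-versions of the invariants established in the lemmas of Section \ref{subsubsection:invariants}. Theorem \ref{exposant2-k} tells us that for a single variety $X$ over a number field $k$, the exponent of $\Ker[CH^2(X)\to CH^2(\bar X)^{G_k}]$ is bounded by a quantity depending only on $[k:\Q]$ and the invariants $d_i(X)$, $d_{NS}(X)$, $n_{ij}(X)$ (for $(i,j)\in\{(2,1),(2,2),(3,2)\}$), and $d_{h,22}(X)$. The whole point of the corollary is that, when $X$ ranges over the fibers $\mathcal X_s$ of a projective smooth family $\pi:\mathcal X\to S$, all of these invariants can be bounded uniformly.

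First I would observe that the group in the statement, $\Ker[CH^2(\mathcal X_s)\to CH^2(\mathcal X_{\bar s})]$, is contained in the group appearing in Theorem \ref{exposant2-k}, namely $\Ker[CH^2(\mathcal X_s)\to CH^2(\mathcal X_{\bar s})^{G_K}]$, since the target of the latter map is a subgroup of $CH^2(\mathcal X_{\bar s})$. Hence it suffices to bound the exponent of the larger kernel, and that is precisely what Theorem \ref{exposant2-k} controls, applied to the fiber $X=\mathcal X_s$ over the residue field $K$. So the entire task reduces to producing, for each relevant invariant, a uniform bound over all fibers $\mathcal X_s$ as $s$ runs over $K$-points of $S$ with $[K:k]\leq d$.

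Next I would assemble these uniform bounds one invariant at a time. The cohomological invariants $n_{ij}(\mathcal X_s)$ are locally constant in projective smooth families (they are the orders of the finite torsion groups $\bigoplus_\ell H^i(\overline{\mathcal X_s},\Z_\ell(j))_{tors}$, which are constant on connected components by smooth proper base change and the specialization isomorphisms), so they take finitely many values and are trivially bounded. For $d_i(\mathcal X_s)$ I would invoke Lemma \ref{idansf}; for $d_{NS}(\mathcal X_s)$, Lemma \ref{iNSdansf} (which uses the hypothesis $H^2(\mathcal X_s,\mathcal O_{\mathcal X_s})=0$); and for $d_{h,22}(\mathcal X_s)$, Lemma \ref{dhdansf}. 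One technical point to handle cleanly is that these lemmas bound the invariants of $\mathcal X_s$ over its residue field $\kappa(s)$, whereas the corollary allows $s$ to be a $K$-point for a varying finite extension $K/k$ of degree $\leq d$; but a $K$-point of $S$ is in particular a point $s\in S$ whose residue field embeds in (indeed, has degree at most $d$ over) $k$, and restricting scalars only changes the invariants by a factor controlled by $d$. Since the degree $[K:\Q]=[K:k][k:\Q]$ is bounded (the base $k$ being fixed and $[K:k]\leq d$), the explicit dependence on $[K:\Q]$ in Theorem \ref{exposant2-k} is also uniformly bounded.

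The main obstacle, such as it is, lies in this matching of frameworks rather than in any deep new idea: one must verify that the invariants attached to the fiber over a $K$-point coincide with (or are comparably bounded by) the invariants of the fiber $\mathcal X_s$ over its scheme-theoretic point $s$, after the finite base change $\Spec K\to S$. Once this bookkeeping is in place, taking $N$ to be the product (or least common multiple) of the finitely many resulting bounds — which depends only on $\pi$ and $d$ — yields $N\cdot\Ker[CH^2(\mathcal X_s)\to CH^2(\mathcal X_{\bar s})]=0$ for every such $s$, completing the proof.
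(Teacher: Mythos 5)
Your proposal is correct and takes essentially the same route as the paper: its proof likewise consists of applying Theorem \ref{exposant2-k} to the fibre $\mathcal X_s$ regarded as a variety over $K$ (of degree at most $d[k:\Q]$ over $\Q$) and observing that the invariants of Section \ref{subsubsection:invariants} are bounded independently of $s$ via Lemmas \ref{idansf}, \ref{iNSdansf}, \ref{dhdansf}, together with the constancy of the $n_{ij}$ in smooth projective families. The only difference is that you spell out the bookkeeping between the $K$-point $s$ and its image scheme-point of $S$, which the paper leaves implicit; this is harmless and correct.
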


\begin{proof}
Il suffit d'appliquer le th\'eor\`eme \ref{exposant2-k} \`a la vari\'et\'e $\mathcal X_s$ sur le corps de nombres $K$ de degr\'e au plus $d[k:\mathbb Q]$ sur $\mathbb Q$ et de remarquer que les invariants \ref{subsubsection:invariants}  sont born\'es ind\'ependamment de $s$ gr\^ace aux Lemmes \ref{idansf}, \ref{iNSdansf} et \ref{dhdansf}.
\end{proof}

\subsection{Exposant uniforme}
\

\noindent {\it Preuve du Th\'eor\`eme \ref{theorem:exposant}.}
Soient $\pi:\mathcal X\to S, s\in S,\ell$ comme dans l'\'enonc\'e. D'apr\`es le corollaire \ref{exposant2}, on a un entier $N_1=N_1(\pi,d)$ qui borne l'exposant du groupe $\Ker[ CH^2(\mathcal X_s)\to CH^2(\bar{\mathcal X}_s)]$. Puisque $S$ est une courbe, on peut appliquer le th\'eor\`eme \ref{theorem:exponentgeo} qui donne une borne $N_2=N_2(\pi,d,\ell)$ pour l'exposant du groupe $|\mathrm{Im}(CH^2(\mathcal X_s)\lra CH^2(\mathcal X_{\overline s}))\{\ell\}|$. Il suffit donc de prendre $N=N_1N_2$.

\

Dans le cas o\`u le sch\'ema de base $S$ est de dimension sup\'erieure, on peut trouver une borne pour l'exposant de la torsion, qui d\'epend aussi du nombre de places \`a l'infini d'un mod\`ele entier: 

\begin{theorem}\label{theorem:exponent2}
Soit $\mathcal S$ un sch\'ema s\'epar\'e de type fini sur $\mathbb Z$ et soit $\pi:\mathcal X\to \mathcal S$ un morphisme projectif lisse, \`a fibres g\'eometriquement int\`egres. Supposons  que  $H^2(\mathcal X_s,\mathcal O_{\mathcal X_s})=0$ pour tout point $s\in \mathcal S$.
Soient $d,r>0$ des entiers. Il existe une constante $N=N(\pi,d,r)$ qui v\'erifie la propri\'et\'e suivante: 
  soit $k$ un corps de nombres de degr\'e $[k:\mathbb Q]\leq d$ et soit $s$ un $k$-point de $\mathcal S$; si le morphisme 
$$s : \Spec k\lra \mathcal S$$
s'étend en un morphisme 
$$U\lra \mathcal S$$
au-dessus d'un ouvert $U\subset \Spec \mathcal O_k$ tel que  $U$ admet au plus $r$ places \`a l'infini, alors  
$$NCH^2(\mathcal X_s)_{tors}=0$$
\end{theorem}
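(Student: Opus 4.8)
The plan is to imitate the proof of Théorème \ref{theorem:exposant}, splitting $CH^2(\mathcal X_s)_{tors}$ into its \emph{partie arithmétique} and its \emph{partie géométrique} and bounding the exponent of each separately; the only change compared with the case of a base curve is that the geometric part will be controlled through Proposition \ref{geomgen} instead of Théorème \ref{theorem:exponentgeo}, which is precisely where the hypothesis on the number $r$ of places at infinity of $U$ enters. Concretely, I would use the short exact sequence
\[
0\lra \Ker\bigl[CH^2(\mathcal X_s)_{tors}\to CH^2(\mathcal X_{\bar s})^{G_k}\bigr]\lra CH^2(\mathcal X_s)_{tors}\lra \mathrm{Im}\bigl[CH^2(\mathcal X_s)_{tors}\to CH^2(\mathcal X_{\bar s})^{G_k}\bigr]\lra 0
\]
together with the elementary fact that the exponent of the middle group divides the product of the exponents of the two outer groups, so that it is enough to bound these two exponents uniformly in $s$.

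First I would dispose of some harmless reductions. Since $k$ has characteristic zero, the morphism $s$ factors through the generic fibre $S=\mathcal S_{\mathbb Q}$, and its image lies in a single irreducible component of $\mathcal S$; replacing $\mathcal S$ by the finitely many integral components of $\mathcal S_{\red}$ (with reduced structure) and $\pi$ by its base change, then taking the maximum of the resulting constants, I may assume $\mathcal S$ integral. This is needed because Proposition \ref{geomgen} is stated for $\mathcal S$ integral; note that a $U$-point extending $s$ automatically factors through the component containing $s$, since $U$ is integral.

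For the arithmetic part, I would apply Corollaire \ref{exposant2} to the family $\pi_{\mathbb Q}:\mathcal X_{\mathbb Q}\to S$ over $\mathbb Q$ and to the point $s$, viewed as a $k$-point with $[k:\mathbb Q]\le d$; the vanishing $H^2(\mathcal X_s,\mathcal O_{\mathcal X_s})=0$ assumed for all $s\in\mathcal S$ holds in particular on $S$. This produces a constant $N_1=N_1(\pi,d)$ killing $\Ker[CH^2(\mathcal X_s)\to CH^2(\mathcal X_{\bar s})]$, hence also its subgroup, the kernel of the torsion map. For the geometric part, I would apply Proposition \ref{geomgen} with the given $d$ and $r$ to bound the cardinality of $\mathrm{Im}(CH^2(\mathcal X_s)\to CH^2(\mathcal X_{\bar s}))_{tors}$ by a constant $N_2=N_2(\pi,d,r)$; since any torsion class on $\mathcal X_s$ maps to a $G_k$-invariant torsion class on $\mathcal X_{\bar s}$, the group $\mathrm{Im}[CH^2(\mathcal X_s)_{tors}\to CH^2(\mathcal X_{\bar s})^{G_k}]$ sits inside this group and so has exponent at most $N_2$. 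Taking $N=N_1N_2$ then gives $N\cdot CH^2(\mathcal X_s)_{tors}=0$, with $N$ depending only on $\pi$, $d$ and $r$.

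The substantive content has already been isolated in Corollaire \ref{exposant2} and Proposition \ref{geomgen}, so the points that will require genuine care are the bookkeeping ones: checking that the hypotheses of both results really hold after the reduction (integrality of the base, the vanishing of $H^2(\mathcal X_s,\mathcal O_{\mathcal X_s})$ for the arithmetic input, and the model-theoretic hypothesis on $U$ for the geometric input), and verifying the two inclusions of groups that let me transport the bounds on $\Ker$ and $\mathrm{Im}$ of the full Chow map to the kernel and image of the torsion map. I do not expect an obstacle beyond these: the essential difficulty — controlling the geometric part in terms of $r$ by specialising to finite-field fibres — is exactly what Proposition \ref{geomgen} supplies, and the argument here is a recombination of that statement with the effective bound of Corollaire \ref{exposant2}.
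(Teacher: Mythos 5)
Your proposal is correct and is essentially identical to the paper's own proof, which reads in its entirety: \emph{il suffit d'appliquer le Corollaire \ref{exposant2} et la Proposition \ref{geomgen}} — i.e.\ exactly your decomposition into arithmetic part (bounded via Corollaire \ref{exposant2} applied over $\mathbb{Q}$) and geometric part (bounded via Proposition \ref{geomgen}, where $r$ enters). The bookkeeping you spell out (reduction to $\mathcal S$ integral, the exact sequence, the inclusion of the torsion image in $\mathrm{Im}(CH^2(\mathcal X_s)\to CH^2(\mathcal X_{\bar s}))_{tors}$) is left implicit in the paper but is carried out correctly in your write-up.
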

\begin{proof}
Il suffit d'appliquer le Corollaire \ref{exposant2} et la Proposition \ref{geomgen}.
\end{proof}

\section{Rappels sur la méthode de Saito-Somekawa-Colliot-Thélène-Raskind}

Soit $X$ une vari\'et\'e projective lisse g\'eom\'etriquement int\`egre sur un corps de nombres $k$, telle que $H^2(X, \mathcal O_X)=0$. Pour montrer la finitude du groupe $CH^2(X)_{tors}$ on proc\`ede en deux \'etapes: on trouve une borne sur l'exposant (ce qui est fait dans la section pr\'ec\'edente), et on trouve une borne sur le sous-groupe de $n$-torsion $CH^2(X)[n]$ pour $n$ fix\'e. Pour la deuxi\`eme \'etape, J.-L. Colliot-Th\'el\`ene et W. Raskind \cite{CTR} utilisent la m\'ethode de {\it localisation} qui permet de relever les \'el\'ements du groupe  $CH^2(X)[n]$ en des \'el\'ements du groupe $CH^2(\mathcal X_U)[N]$ o\`u $\mathcal X_U$ est un mod\`ele de $X$ sur un ouvert $U\subset \Spec \Ok$ pour un certain entier $N$.  

Dans cette section, on reprend certaines variantes des arguments pr\'ec\'edents en les adaptant de façon à pouvoir les utiliser de manière effective dans la section suivante. L'enjeu principal est la dépendance des constantes qui apparaissent dans la torsion dans $CH^2$ en le groupe fondamental étale de la base $U$. Cette dépendance rend délicat le fait de remplacer $U$ par un revêtement ramifié fini qui, même lorsque l'on en borne le degré, peut avoir un groupe fondamental étale arbitrairement grand. C'est cette raison qui rend nécessaire d'adapter certains arguments.

\subsection{Une suite de localisation}

La suite exacte suivante est le point de départ du contrôle de la torsion dans $CH^2$.

\begin{lemma}\label{sloc}  (\cite[Proposition 1.2]{R2}; voir  \cite[suite $(\mathcal L)$ p.231]{CTR})
Soit $X$ une vari\'et\'e projective et lisse, g\'eom\'etriquement int\`egre, d\'efinie sur un corps de nombres $k$ et soit $\mathcal X_U \to U$ un mod\`ele projectif et lisse de $X$ sur un ouvert $U\subset \Spec \Ok$.  On a une suite exacte
\begin{equation}\label{suiteloc}
H^1(X,\mathcal K_2)\to \bigoplus\limits_{s\in U^{(1)}} \mathrm{Pic}(\mathcal X_s)\to CH^2(\mathcal X_U)\to CH^2(X)\to 0.
\end{equation}

\end{lemma}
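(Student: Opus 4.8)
The plan is to realize (\ref{suiteloc}) as the low-degree part of the localization long exact sequence in $\mathcal K$-cohomology attached to the pair $(\mathcal X_U, X)$, where $X=\mathcal X_\eta$ is the generic fibre (an open subscheme of $\mathcal X_U$, $\eta$ the generic point of $U$) and the closed complement is the disjoint union of the special fibres $Z=\bigsqcup_{s\in U^{(1)}}\mathcal X_s$, each of which is a divisor in $\mathcal X_U$. The scheme $\mathcal X_U$ is regular, being smooth over the regular one-dimensional base $U$, so the Gersten resolution of the Zariski sheaf $\mathcal K_2$ furnishes a complex $C^\bullet_{\mathcal X_U}$ whose cohomology computes $H^p(\mathcal X_U,\mathcal K_2)$, with $H^2(\mathcal X_U,\mathcal K_2)\simeq CH^2(\mathcal X_U)$ by the Bloch--Quillen formula; the same applies on $X$ and on each fibre $\mathcal X_s$.

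First I would split the Gersten complex of $\mathcal K_2$ on $\mathcal X_U$ according to the image of points under $\mathcal X_U\to U$. A point lying over $\eta$ has the same codimension in $\mathcal X_U$ as in $X$ and contributes to the Gersten complex of $X$; a point lying over a closed point $s$ lies in the divisor $\mathcal X_s$ and has codimension in $\mathcal X_U$ one greater than its codimension in $\mathcal X_s$. This produces a short exact sequence of complexes
$$0\to C^\bullet_Z\to C^\bullet_{\mathcal X_U}\to C^\bullet_{X}\to 0,$$
in which $C^\bullet_X$ is the $\mathcal K_2$-Gersten complex of $X$ and $C^\bullet_Z$ is the subcomplex supported on $Z$.

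The key identification is that $C^\bullet_Z$ is the $\mathcal K_1$-Gersten complex of $Z$ shifted by one: a codimension-$p$ point $x$ of $\mathcal X_s$ contributes $K_{2-(p+1)}(\kappa(x))=K_{1-p}(\kappa(x))$, which is exactly the term of the $\mathcal K_1$-complex of $\mathcal X_s$ at $x$, but placed in cohomological degree $p+1$. Hence $H^p(C^\bullet_Z)=\bigoplus_s H^{p-1}(\mathcal X_s,\mathcal K_1)$. Taking the long exact cohomology sequence and reading off degree $2$ yields
$$H^1(X,\mathcal K_2)\xrightarrow{\ \delta\ }\bigoplus_s H^1(\mathcal X_s,\mathcal K_1)\to CH^2(\mathcal X_U)\to CH^2(X)\to\bigoplus_s H^2(\mathcal X_s,\mathcal K_1).$$
Since $\mathcal K_1\simeq\mathbb G_m$ as Zariski sheaves, one has $H^1(\mathcal X_s,\mathcal K_1)=\mathrm{Pic}(\mathcal X_s)$, and the crucial vanishing $H^2(\mathcal X_s,\mathcal K_1)=0$ holds because the $\mathcal K_1$-Gersten complex of a variety is concentrated in degrees $0$ and $1$ (negative $K$-groups of fields vanish). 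This produces precisely the four-term sequence (\ref{suiteloc}), the map $\delta$ being the residue/specialization map onto $\bigoplus_s\mathrm{Pic}(\mathcal X_s)$ and the rightmost arrow being surjective.

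The main technical obstacle is the input used silently above, namely the validity of Gersten's conjecture for $\mathcal K_2$ on the arithmetic scheme $\mathcal X_U$, which is smooth over the Dedekind base $U$ but \emph{not} over a field: this is the mixed-characteristic case, established through the work of Gillet--Levine and Bloch, and it is exactly what underlies \cite[Proposition 1.2]{R2}. Once the Gersten resolution is granted, everything else is formal bookkeeping with the coniveau filtration, together with the elementary computation of $H^\bullet(\mathcal X_s,\mathcal K_1)$.
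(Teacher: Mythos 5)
Your argument is correct, and it is essentially the proof of the two references that the paper cites for this lemma (\cite[Proposition 1.2]{R2} and \cite[suite $(\mathcal L)$]{CTR}); the paper itself gives no proof. The decomposition of the coniveau complex of $\mathcal X_U$ according to points lying over $\eta$ versus points lying over closed points of $U$, the identification of the subcomplex with the shifted $\mathcal K_1$-Gersten complexes of the fibres, and the vanishing of the degree-$2$ term of those complexes are exactly the standard steps, and your bookkeeping (codimension shift by one, $H^3(C^\bullet_Z)=0$ from vanishing of negative $K$-groups of fields) is right.

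One correction to your final paragraph, which mislocates the technical input. The proof does \emph{not} require Gersten's conjecture for $\mathcal K_2$ on the mixed-characteristic scheme $\mathcal X_U$, and it is inaccurate to present that as what underlies \cite{R2}. At the level of $\mathcal X_U$ you only use: (i) the existence of the coniveau (Brown--Gersten--Quillen) complex, valid for any Noetherian finite-dimensional scheme; and (ii) Quillen's identification of its $d_1$ differentials with tame symbols and divisor maps, which gives $H^2(C^\bullet_{\mathcal X_U})\cong CH^2(\mathcal X_U)$ (the general statement $E_2^{p,-p}\cong CH^p$) with no exactness/resolution property whatsoever. The Gersten \emph{resolution} (Quillen's theorem, an equicharacteristic statement) is needed only on the generic fibre $X$, smooth over the number field $k$, to identify $H^1(C^\bullet_X)$ with the sheaf cohomology $H^1(X,\mathcal K_2)$; on the fibres $\mathcal X_s$ one only needs the classical identification $CH^1(\mathcal X_s)=\mathrm{Pic}(\mathcal X_s)$ for regular schemes. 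This is precisely why the sequence is stated with $H^1(X,\mathcal K_2)$ rather than $H^1(\mathcal X_U,\mathcal K_2)$: the formulation is designed to avoid mixed-characteristic Gersten. Concretely, your sentence ``$H^2(\mathcal X_U,\mathcal K_2)\simeq CH^2(\mathcal X_U)$ by the Bloch--Quillen formula'' is the one place where you invoke it; replace it by the $E_2$-term statement above and your proof is complete without any appeal to Gillet--Levine or Bloch.
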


On s'int\'eresse \`a comprendre le conoyau de l'application de gauche de la suite ci-dessus. La première étape est le lemme suivant, adapté de \cite[Lemme 3.2]{CTR}.

\begin{lemma}\label{lemme3.2}
Soit $X$ une vari\'et\'e projective, lisse, g\'eom\'etriquement int\`egre d\'efinie sur un corps de nombres $k$ et soit $\mathcal X_U \to U$ un mod\`ele projectif et lisse de $X$ sur un ouvert $U$ de l'anneau des entiers $\Ok$ de $k$.  On suppose que $H^2(\mathcal X_s,\mathcal O_{\mathcal X_s})=0$ pour tout point $s\in U$. 

Suposons qu'il existe  un entier strictement positif $N$ tel que pour tout point $s\in  U$ et pour tout point g\'eom\'etrique $\bar s$ au-dessus de $s$, le conoyau de la flèche naturelle $\Pic (\mathcal X_U)\to \NS(\mathcal X_{\bar s})$ est tué par $N$. Alors le conoyau de l'application compos\'ee
$$H^1(X, \mathcal K_2)\to \bigoplus\limits_{s\in U^{(1)}} \mathrm{Pic}(\mathcal X_s)\to \bigoplus\limits_{s\in U^{(1)}} \mathrm {NS}(\mathcal X_{\bar s})$$
est d'exposant fini, tu\'e par l'entier 
$$N\cdot\# Cl(U).$$
\end{lemma}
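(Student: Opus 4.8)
The plan is to prove the statement by showing that, for each closed point $s_0\in U^{(1)}$ and each class $\mu\in\NS(\mathcal X_{\bar s_0})$, the element of $\bigoplus_{s}\NS(\mathcal X_{\bar s})$ which equals $N\cdot\# Cl(U)\cdot\mu$ in the $s_0$-component and $0$ elsewhere lies in the image of the composite map. Since the target is a direct sum of finite-support families, this at once gives that the cokernel is killed by $N\cdot\# Cl(U)$. The two ingredients are the tame-symbol description of the boundary map in the localization sequence (\ref{suiteloc}) of Lemme \ref{sloc}, and the fact that a multiple of a closed point of $U$ becomes principal after multiplication by $\# Cl(U)$. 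Write $\beta_s:\Pic(\mathcal X_s)\to\NS(\mathcal X_{\bar s})$ for the map (pullback to the geometric fibre followed by passage to $\NS$), so that the second arrow of the composite is $\bigoplus_s\beta_s$.

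First I would produce, for a given $\mu$, a suitable class over the whole model. By hypothesis the cokernel of $\Pic(\mathcal X_U)\to\NS(\mathcal X_{\bar s_0})$ is killed by $N$, so there is $\lambda\in\Pic(\mathcal X_U)$ whose image under $\Pic(\mathcal X_U)\to\Pic(\mathcal X_{s_0})\xrightarrow{\beta_{s_0}}\NS(\mathcal X_{\bar s_0})$ equals $N\mu$. Next, since $Cl(U)=\Pic(U)$, the class of $s_0$ has order dividing $\# Cl(U)$, so the ideal $\mathfrak p_{s_0}^{\# Cl(U)}$ is principal; I choose $u\in k^*$ with $\mathrm{div}_U(u)=\# Cl(U)\cdot[s_0]$. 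Thus $v_{s_0}(u)=\# Cl(U)$ and $v_s(u)=0$ for every other $s\in U^{(1)}$ (the behaviour of $u$ at the places outside $U$ is irrelevant, as those points do not contribute to the boundary over $U$). Because $X$ is projective and geometriquement intègre, $H^0(X,\mathbb G_m)=k^*$, so $u$ defines a global section of $\mathcal K_1$ on $X$ and the cup product $\lambda|_X\cup u\in H^1(X,\mathcal K_2)$ is well defined.

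Finally I would compute the image of $\lambda|_X\cup u$ under the composite. The boundary map of (\ref{suiteloc}) is given fibrewise by the tame symbol; since $\lambda$ extends to $\mathcal X_U$ (hence is a unit along each vertical divisor) and $u$ is pulled back from $U$, the projection formula for the tame symbol yields $\partial_s(\lambda|_X\cup u)=\pm\,v_s(u)\,\lambda|_{\mathcal X_s}$ in $\Pic(\mathcal X_s)$. By the choice of $u$ this is $\pm\,\# Cl(U)\,\lambda|_{\mathcal X_{s_0}}$ in the $s_0$-component and vanishes elsewhere. Applying $\bigoplus_s\beta_s$ and using $\beta_{s_0}(\lambda|_{\mathcal X_{s_0}})=N\mu$, the image of $\lambda|_X\cup u$ under the composite is $\pm\,N\cdot\# Cl(U)\cdot\mu$ at $s_0$ and $0$ elsewhere, as wanted. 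The main obstacle is the verification of the tame-symbol formula, namely that the residue of $\lambda|_X\cup u$ along $\mathcal X_s$ is exactly $v_s(u)\,\lambda|_{\mathcal X_s}$; this is where the extendability of $\lambda$ across the model and the hypothesis $H^2(\mathcal X_s,\mathcal O_{\mathcal X_s})=0$ (controlling the specialization of $\NS$, as in Lemme \ref{iNSdansf}) are used, and it is precisely the step at which the factor $\# Cl(U)$ is forced into the bound.
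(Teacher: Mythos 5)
Your proof is correct and takes essentially the same route as the paper's: the paper likewise lifts $N l_s$ to a class $L_s\in\Pic(\mathcal X_U)$ using the hypothesis, picks $\pi_s\in B$ generating $\mathfrak s^{\,\# Cl(U)}$ with $U=\Spec B$, and shows that $\sum_s L_s\otimes\pi_s$ maps to $\{N\cdot\# Cl(U)\cdot l_s\}_s$ under the composite $\Pic(\mathcal X_U)\otimes k^*\to H^1(X,\mathcal K_2)\to\bigoplus_s\Pic(\mathcal X_s)\to\bigoplus_s\NS(\mathcal X_{\bar s})$, citing \cite[Lemme 3.2]{CTR} for the tame-symbol computation you spell out (working componentwise versus summing over the support is immaterial). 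One minor inaccuracy in your closing remark: the hypothesis $H^2(\mathcal X_s,\mathcal O_{\mathcal X_s})=0$ plays no role in the residue formula itself, which needs only that $\lambda$ extends over $\mathcal X_U$ and that $u$ is pulled back from $U$; this does not affect the validity of your argument.
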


\begin{proof}


\noindent Soit $\{l_s\}_{s\in U^{(1)}}\in \bigoplus\limits_{s\in U^{(1)}} \mathrm {NS}(\mathcal X_{\bar s})$. 
D'apr\`es l'hypoth\`ese, la classe $N l_s$ se rel\`eve en un \'el\`ement $L_s\in \mathrm{Pic}(\mathcal X_U)$.

\noindent On \'ecrit $U=\mathrm{Spec}\,B$. Soit $d=\#Cl(U)$. Pour tout id\'eal premier $s\in \mathrm{Spec}\,B$ l'idéal $s^d$ est principal et l'on peut donc \'ecrire $s^d=(\pi_s)$ pour un certain $\pi_s\in B$. Soit $v:k^*\to \mathbb Z$ la valuation associ\'ee \`a $s$. On a $v(\pi_s)=d$ et $v_{s'}(\pi_s)=0$ pour $s'\neq s$. 

\noindent Comme dans \cite[Lemme 3.2]{CTR} on trouve que $\{l_s\}_{s\in U^{(1)}}^{\otimes Nd}\in \bigoplus\limits_{s\in U^{(1)}} \mathrm{NS}(\mathcal X_{\bar s})$ est l'image de $\sum L_s\otimes \pi_s$ par la fl\`eche compos\'ee
$$\Pic(\mathcal X_U)\otimes k^*\to\mathrm {Pic}(X)\otimes k^*\to H^1(X, \mathcal K_2)\to \bigoplus\limits_{s\in U^{(1)}} \mathrm{Pic}(\mathcal X_s)\to \bigoplus\limits_{s\in U^{(1)}} \mathrm {NS}(\mathcal X_{\bar s}).$$

\end{proof}

Pour comprendre la partie qui vient du groupe $\bigoplus\limits_{s\in U^{(1)}} \mathrm{Pic^0}(\mathcal X_s)$ dans la suite (\ref{suiteloc}), on va utiliser les techniques d\'evelopp\'ees par M. Somekawa dans \cite{S90}.  On rappelle ces techniques dans le paragraphe suivant pour pouvoir les utiliser de mani\`ere effective.


\subsection{Bornes uniformes dans la suite exacte de Bloch-Kato-Saito-Somekawa}

%

\subsubsection{Le groupe $K(k, A, \mathbb G_m)$.} Soit $k$ un corps et soit $A$ une vari\'et\'e ab\'elienne sur $k$. Somekawa \cite{S90} a d\'efini le groupe $K(k, A, \mathbb G_m)$ comme un quotient du groupe $$\bigoplus\limits_{L/k\,\text{fini}} A(L)\otimes_{\mathbb Z} L^*$$ par des relations de deux types: formule de projection, et r\'eciprocit\'e \`a la Weil. Dans ce texte, on n'aura pas besoin de pr\'eciser ces relations. Notons par ailleurs que la construction de Somekawa est plus g\'en\'erale : elle est en particulier valable pour une famille de vari\'et\'es semi-ab\'eliennes sur $k$.

Si $n$ est un entier inversible dans $k$, on dispose d'une application naturelle \cite[Proposition 1.5]{S90}:
\begin{equation}\label{flechec}
K(k, A, \mathbb G_m)/n\to H^2_{\acute{e}t}(k, A(1)[n]).
\end{equation}

\subsubsection{R\'esidus dans le cas local.} Supposons maintenant $k$ local de caract\'eristique $0$ et soit $\mathcal A$ un sch\'ema ab\'elien sur l'anneau des entiers $\mathcal O_k$ de $k$. Soit $A=\mathcal A_k$ la fibre g\'en\'erique de $\mathcal A/\mathcal O_k$. Soit $\kappa$ le corps r\'esiduel de $\mathcal O_k$ et soit $\mathcal A_{\kappa}$ la fibre sp\'eciale de $\mathcal A$. 
Si $L/k$ est une extension finie de $k$ de corps r\'esiduel $\kappa_L$, on dispose d'une fl\`eche de bord 
$A(L)\otimes L^*\to \mathcal A_{\kappa}(\kappa)$ obtenue par la composition 
\begin{equation}\label{fbord}
A(L)\otimes L^*\to \mathcal A_{\kappa_L}(\kappa_L) \to \mathcal A_{\kappa}(\kappa).
\end{equation}
La premi\`ere fl\`eche envoie $x\otimes \alpha\in \mathcal A(L)\otimes L^*=\mathcal A(\mathcal O_L)\otimes L^*$ sur $v_L(\alpha)\bar x$ o\`u $\bar x$ est l'image de $x$ dans $\mathcal A_{\kappa_L}(\kappa_L)$ et o\`u $v_L$ est la valuation de $L$. La deuxi\`eme fl\`eche est induite par la norme.  La fl\`eche  (\ref{fbord}) ci-dessus passe au quotient par les relations et induit une fl\`eche \cite[p.114]{S90}
$$\partial: K(k, A, \mathbb G_m)\to \mathcal A_{\kappa}(\kappa).$$ 

Soit $T(A)=\varprojlim_n A(\bar k)[n]$ le module de Tate de $A$, et soit $T_{\ell}(A)=\varprojlim_r A(\bar k)[\ell^r]$ o\`u $\ell$ est un nombre premier. Soit $G_k=Gal(\bar k/k)$ le groupe de Galois absolu de $k$ et soit $T(A)_{G_k}$ le module des coinvariants. Puisque $\mathcal A$ est lisse sur $\mathcal O_k$, on dispose d'une surjection $\alpha: T(A)_{G_k}\to \mathcal A_{\kappa}(\kappa)$. 

\

 \noindent La compos\'ee de la fl\`eche  (\ref{flechec}) avec la dualit\'e locale ({\it loc .cit.}) $H^2_{\acute{e}t}(k, A(1)[n])\simeq A(\bar k)[n]_{G_k}\;$ donne une fl\`eche 
  $K(k, A, \mathbb G_m)/n\to A(\bar k)[n]_{G_k}$. En passant \`a la limite on obtient ainsi une application  
 \begin{equation}\label{flechec2}
 c(k):K(k, A, \mathbb G_m)\to T(A)_{G_k}.
 \end{equation}

 Si $\mathcal A$ est un sch\'ema ab\'elien comme ci-dessus, on a le diagramme commutatif suivant:

\begin{equation}\label{dS}
\xymatrix{
\bigoplus\limits_{L/k\,\text{fini}} A(L)\otimes L^*\ar@{->>}[r]^{\pi}\ar[d]&  K(k, A, \mathbb G_m)\ar[r]^{c(k)}\ar[d]^{\partial} &   T(A)_{G_k} \ar@{->>}[d]^{\alpha}\\
\mathcal A_{\kappa}(\kappa) \ar@{=}[r]& \mathcal A_{\kappa}(\kappa)  \ar@{=}[r]  & \mathcal A_{\kappa}(\kappa),
}
\end{equation}
o\`u les applications $\pi$,  $\alpha$, $\partial$, et $c(k)$  sont surjectives (voir \cite[Theorem 3.3]{S90} pour l'application $c(k)$).

\

\subsubsection{Cas global}
Supposons maintenant que $k$ est un corps de nombres. Soit $S$ un ensemble fini de places de $k$ qui contient les places archim\'ediennes et les places de mauvaise r\'eduction de $A$. Comme ci-dessus, on \'ecrit $T(A)_{G_k}$  pour le module de Tate de $A$.  Pour toute place $v$ de $k$ on note $k_v$ le compl\'et\'e de $k$,  $A_v=A_{k_v}$, $G_v=G_{k_v}$ et $T(A_v)_{G_v}$ le module de Tate de $A_v$. 

D'apr\`es un r\'esultat de Katz et Lang, $T(A)_{G_k}$ est fini. Plus pr\'ecis\'ement, on a la borne suivante :
\begin{proposition}[\cite{KL82}, Theorem 1(bis), Theorem 1(ter)]\label{KL}
Soit $v$ une place finie de $k$, de corps r\'esiduel $\kappa(v)$, en laquelle $A$ a  bonne r\'eduction $A_{\kappa(v)}$. Alors 
\begin{enumerate}
\item
on a une surjection 
$$T_{\ell}(A_{\kappa(v)})_{G_{\kappa(v)}} \twoheadrightarrow T_{\ell}(A)_{G_k}$$ pour tout premier $\ell\neq car(\kappa(v))$;
\item  on a l'\'egalit\'e de cardinaux des groupes fini
$$\#T(A_{\kappa(v)})_{G_{\kappa(v)}}=\# A_{\kappa(v)}(\kappa(v)).$$
\end{enumerate}
\end{proposition}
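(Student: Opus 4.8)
The plan is to establish the two assertions separately, in both cases comparing the Galois module $T_\ell(A)$ over $k$ with the Frobenius module attached to the reduction $A_{\kappa(v)}$, and then reading off cardinalities from the elementary behaviour of (co)invariants. Throughout write $p=\mathrm{char}\,\kappa(v)$.

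For (1), fix a place $w$ of $\bar k$ above $v$, yielding a decomposition subgroup $G_v\subset G_k$ and its inertia subgroup $I_v$. Since $A$ has good reduction at $v$, the criterion of Néron--Ogg--Shafarevich shows that for $\ell\neq p$ the representation $T_\ell(A)$ is unramified at $v$: the group $I_v$ acts trivially, so the $G_v$-action factors through $G_v/I_v\cong G_{\kappa(v)}$, and the reduction map identifies $T_\ell(A)$ with $T_\ell(A_{\kappa(v)})$ as $G_{\kappa(v)}$-modules. I would then use the general fact that for a subgroup $H\subset G$ and a $G$-module $M$ the natural map on coinvariants $M_H\twoheadrightarrow M_G$ is surjective (both are quotients of $M$, and the relations defining $M_G$ contain those defining $M_H$). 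Applying this with $H=G_v$, and using that $I_v$ acts trivially so that $T_\ell(A)_{G_v}=T_\ell(A)_{G_{\kappa(v)}}$, one obtains
$$T_\ell(A_{\kappa(v)})_{G_{\kappa(v)}}\;\cong\;T_\ell(A)_{G_{\kappa(v)}}\;=\;T_\ell(A)_{G_v}\;\twoheadrightarrow\;T_\ell(A)_{G_k},$$
which is the desired surjection.

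For (2) the computation takes place entirely over $\kappa(v)\cong\F_q$, whose absolute Galois group $G_{\kappa(v)}$ is topologically generated by the $q$-power Frobenius $F$. For such a procyclic group the invariants and coinvariants of a module $M$ are $M^F=\ker(F-1)$ and $M_F=\mathrm{coker}(F-1)$, and these are the $H^0$ and $H^1$ sitting in a six-term long exact sequence. I would apply that sequence to the tautological short exact sequence of $F$-modules
$$0\lra T_\ell(A_{\kappa(v)})\lra V_\ell(A_{\kappa(v)})\lra A_{\kappa(v)}(\overline{\kappa(v)})\{\ell\}\lra 0,$$
where $V_\ell=T_\ell\otimes_{\Z_\ell}\Q_\ell$, valid for every prime $\ell$. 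The key point is that $1-F$ is an isogeny of $A_{\kappa(v)}$, hence acts invertibly on the $\Q_\ell$-vector space $V_\ell$; therefore $V_\ell^F=0$ and $(V_\ell)_F=0$, and the long exact sequence collapses to an isomorphism between the $F$-invariants of $A_{\kappa(v)}(\overline{\kappa(v)})\{\ell\}$ and the coinvariants $T_\ell(A_{\kappa(v)})_F$. The former is exactly the $\ell$-primary part of $A_{\kappa(v)}(\kappa(v))$, so $\#\,T_\ell(A_{\kappa(v)})_{G_{\kappa(v)}}$ equals the $\ell$-part of $\#A_{\kappa(v)}(\kappa(v))$; taking the product over all $\ell$ gives the claimed equality.

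The routine inputs (unramifiedness under good reduction, the reduction isomorphism on $\ell$-adic Tate modules, right-exactness of coinvariants) are standard, so the main point requiring care is the uniform treatment of all primes $\ell$ in part (2). For $\ell\neq p$ the statement is the familiar Weil--Lefschetz computation $\#A_{\kappa(v)}(\kappa(v))=\det(1-F\mid T_\ell)$; the delicate case is $\ell=p$, where $T_p$ no longer has full rank $2g$ but only the $p$-rank, and one must check that the same kernel--cokernel mechanism still extracts the correct $p$-part. This works precisely because the argument uses only that $1-F$ is an isogeny and the exactness of the three-term sequence above, both of which persist for $\ell=p$. (Alternatively, part (2) can be deduced from part (1) together with the original global statement of Katz--Lang, but the self-contained computation above appears cleanest.)
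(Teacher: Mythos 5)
The paper itself gives no proof of this proposition: it is imported verbatim from Katz--Lang (\cite{KL82}, Theorem 1(bis) and 1(ter)), so there is no internal argument to compare yours against. Your proof is correct, and it amounts to a self-contained reconstruction of the special case of Katz--Lang that the paper actually uses. In part (1), combining N\'eron--Ogg--Shafarevich (inertia acts trivially for $\ell\neq p$, and reduction identifies $T_\ell(A)$ with $T_\ell(A_{\kappa(v)})$ as $G_{\kappa(v)}$-modules) with the tautological surjection $M_{G_v}\twoheadrightarrow M_{G_k}$ on coinvariants and the identification $M_{G_v}=M_{G_v/I_v}$ when $I_v$ acts trivially is exactly the specialization mechanism behind the cited theorem. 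In part (2), applying the snake lemma to $F-1$ acting on $0\to T_\ell\to V_\ell\to A_{\kappa(v)}(\overline{\kappa(v)})\{\ell\}\to 0$, and using that $1-F$ is an isogeny (hence invertible on $V_\ell$, since $\psi\circ(1-F)=[\deg]$ with nonzero degree), yields $A_{\kappa(v)}(\kappa(v))\{\ell\}\cong (T_\ell)_{G_{\kappa(v)}}$; your insistence on running this mechanism uniformly in $\ell$, rather than invoking the Weil/Lefschetz determinant formula, is the right move, because for $\ell=p$ the physical Tate module only has rank equal to the $p$-rank, yet the three-term sequence and the isogeny argument survive unchanged. Two small points deserve to be made explicit in a write-up, though neither is a gap: coinvariants here are topological (quotient by the closure of the augmentation submodule), and the identification $(T_\ell)_{\hat{\Z}}=\mathrm{coker}(F-1)$ uses that $(F-1)T_\ell$ is closed, which holds because $T_\ell$ is a finitely generated $\Z_\ell$-module; and the passage from the prime-by-prime count to the statement about $T(A_{\kappa(v)})=\prod_\ell T_\ell(A_{\kappa(v)})$ uses that forming $\mathrm{coker}(F-1)$ commutes with this product, which is immediate. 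What your argument does not recover is the full generality of Katz--Lang (finitely generated base fields, relative settings), but that generality is not needed for Proposition~\ref{KL} or for Corollary~\ref{corKL}.
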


\begin{corollary}\label{corKL}
Supposons que $A$ a bonne r\'eduction en deux places  $v_1, v_2$  de caract\'eristiques r\'esiduelles diff\'erentes. Alors le groupe $T(A)_{G_k}$ est fini, de cardinal au plus 
$$\#T(A)_{G_k}\leq \# A_{\kappa(v_1)}(\kappa(v_1))\#A_{\kappa(v_2)}(\kappa(v_2)).$$
\end{corollary}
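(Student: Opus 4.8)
The plan is to reduce the statement to a prime-by-prime estimate and then exploit that $v_1$ and $v_2$ have distinct residual characteristics. First I would use the canonical decomposition of the Tate module $T(A)=\prod_\ell T_\ell(A)$ as a $G_k$-module, in which $G_k$ acts factorwise; passing to coinvariants commutes with this product, so that $T(A)_{G_k}=\prod_\ell T_\ell(A)_{G_k}$ and hence $\#T(A)_{G_k}=\prod_\ell \#T_\ell(A)_{G_k}$. It therefore suffices to bound each factor $\#T_\ell(A)_{G_k}$ and then to assemble the bounds.

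Write $p_1,p_2$ for the (distinct) residual characteristics of $v_1,v_2$. The key observation is that for every prime $\ell$, at least one of the places $v_1,v_2$ has residual characteristic different from $\ell$, so that Proposition \ref{KL}(1) is available at that place. Concretely, I would group the primes as follows: for every $\ell\neq p_1$, apply Proposition \ref{KL}(1) at $v_1$ to obtain a surjection $T_\ell(A_{\kappa(v_1)})_{G_{\kappa(v_1)}}\twoheadrightarrow T_\ell(A)_{G_k}$, whence $\#T_\ell(A)_{G_k}\leq \#T_\ell(A_{\kappa(v_1)})_{G_{\kappa(v_1)}}$. Taking the product over all $\ell\neq p_1$ and invoking Proposition \ref{KL}(2) (the prime-to-$p_1$ Tate module of $A_{\kappa(v_1)}$ being exactly $\prod_{\ell\neq p_1}T_\ell$) gives $\prod_{\ell\neq p_1}\#T_\ell(A)_{G_k}\leq \#A_{\kappa(v_1)}(\kappa(v_1))$.

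It remains to control the single missing factor $\#T_{p_1}(A)_{G_k}$, and here I would use the second place. Since $p_1\neq p_2$, Proposition \ref{KL}(1) applies at $v_2$ for $\ell=p_1$, giving $\#T_{p_1}(A)_{G_k}\leq \#T_{p_1}(A_{\kappa(v_2)})_{G_{\kappa(v_2)}}$; as $p_1$ occurs among the primes $\ell\neq p_2$ appearing in the prime-to-$p_2$ Tate module of $A_{\kappa(v_2)}$, this last quantity is a single factor of $\prod_{\ell\neq p_2}\#T_\ell(A_{\kappa(v_2)})_{G_{\kappa(v_2)}}=\#A_{\kappa(v_2)}(\kappa(v_2))$, hence is bounded by $\#A_{\kappa(v_2)}(\kappa(v_2))$. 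Multiplying the two estimates yields $\#T(A)_{G_k}\leq \#A_{\kappa(v_1)}(\kappa(v_1))\,\#A_{\kappa(v_2)}(\kappa(v_2))$, and in particular the group is finite.

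The only genuinely delicate point is the bookkeeping at the residual characteristics $p_1$ and $p_2$: Proposition \ref{KL}(1) is unavailable at a place precisely when $\ell$ equals its residual characteristic, so the whole argument hinges on having two places of distinct characteristic, each of which covers the one prime the other cannot. A secondary technical point to verify is that coinvariants commute with the product over $\ell$; this is harmless because the $G_k$-action is factorwise and each $T_\ell(A)_{G_k}$ is finite, so that no discrepancy arises between topological and algebraic coinvariants.
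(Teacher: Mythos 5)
Your proof is correct and is essentially the paper's own argument: the paper likewise applies Proposition \ref{KL} at each place to bound the prime-to-$p_i$ part of $T(A)_{G_k}$ by $\# A_{\kappa(v_i)}(\kappa(v_i))$, the two places of distinct residual characteristic covering each other's excluded prime. Your version merely makes explicit the prime-by-prime bookkeeping and the (harmless) commutation of coinvariants with the product $\prod_\ell T_\ell(A)$, which the paper leaves implicit.
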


\begin{proof}
Pour $i=1, 2$, soit $p_i$ la caract\'eristique r\'esiduelle de $v_i$. La proposition pr\'ec\'edente montre que la partie premi\`ere \`a $p_i$ de $T(A)_{G_k}$ est born\'ee sup\'erieurement par  $\# A_{\kappa(v_i)}(\kappa(v_i))$, ce qui conclut.
\end{proof}

Somekawa a \'etabli la suite de r\'eciprocit\'e suivante.

\begin{theorem}[\cite{S90}, Theorem 4.1]\label{theoSomekawa}
Soit $n$ un entier divisible par l'ordre du groupe fini $T(A)_G$. Alors on a une suite exacte
$$ K(k, A, \mathbb G_m)\stackrel{\theta}{\to} \prod\limits_{v\notin S} T(A_v)_{G_v} \oplus \prod_{v\in S} K(k_v, A_v, \mathbb G_m)/n \stackrel{R}{\to} T(A)_G\to 0,$$ o\`u les fl\`eches $\theta, R$ sont induites par les applications $c(k_v)$ de (\ref{flechec2}).
\end{theorem}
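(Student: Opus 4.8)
\emph{Esquisse de preuve (plan).} L'approche que je propose consiste \`a reconna\^itre dans cette suite la suite fondamentale de Poitou--Tate pour le module galoisien fini $M=A(1)[n]$, une fois tous les termes locaux traduits en cohomologie galoisienne via les applications $c(k_v)$. Je commencerais par exploiter que $\#T(A)_{G_k}$ divise $n$ : la suite de coinvariants associ\'ee \`a $0\to T(A)\xrightarrow{\,n\,}T(A)\to A[n]\to 0$ donne $T(A)_{G_k}\simeq A[n]_{G_k}$, et de m\^eme $T(A_v)_{G_v}/n\simeq A[n]_{G_v}$ pour toute place $v$. Posant $M'=\Hom(M,\mu_n)$, un calcul direct (accouplement de Weil) fournit ${M'}^\vee\simeq A[n]$ ; la dualit\'e locale de Tate donne alors, pour toute place $v$,
\[
H^2_{\acute{e}t}(k_v,M)\simeq H^0(k_v,M')^\vee\simeq A[n]_{G_v}\simeq T(A_v)_{G_v}/n,
\]
et $c(k_v)$, compos\'ee de (\ref{flechec}) et de cette dualit\'e, identifie $K(k_v,A_v,\mathbb G_m)/n$ \`a $H^2_{\acute{e}t}(k_v,M)$ (surjectivit\'e de \cite[Theorem 3.3]{S90}). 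Pour $v\notin S$, o\`u $A$ a bonne r\'eduction, je v\'erifierais que le facteur $T(A_v)_{G_v}$ correspond au sous-groupe non ramifi\'e de $H^2_{\acute{e}t}(k_v,M)$, de sorte que le terme central de la suite soit le produit restreint ${\prod_v}'H^2_{\acute{e}t}(k_v,M)$ de Poitou--Tate.

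J'invoquerais ensuite la suite fondamentale de Poitou--Tate pour $M$,
\[
H^2_{\acute{e}t}(k,M)\to {\prod_v}'H^2_{\acute{e}t}(k_v,M)\xrightarrow{\ \Sigma\ }H^0(k,M')^\vee\to 0,
\]
o\`u $\Sigma$ est la somme des invariants locaux. L'isomorphisme $(W^{G_k})^\vee\simeq (W^\vee)_{G_k}$, valable pour tout $G_k$-module fini $W$, joint \`a ${M'}^\vee\simeq A[n]$, identifierait $H^0(k,M')^\vee$ \`a $A[n]_{G_k}\simeq T(A)_{G_k}$ et $\Sigma$ \`a l'application $R$. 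La surjectivit\'e de $R$ se voit d'ailleurs directement : pour une place $v_0\notin S$, la surjection de coinvariants $T(A_{v_0})_{G_{v_0}}\twoheadrightarrow T(A)_{G_k}$, dont la finitude et la surjectivit\'e d\'ecoulent de Katz--Lang (Proposition \ref{KL} et Corollaire \ref{corKL}), montre que $R$ est d\'ej\`a surjective sur ce seul facteur.

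Il resterait \`a transf\'erer l'exactitude de cette suite aux $K$-groupes. Pour $R\circ\theta=0$, j'utiliserais qu'un \'el\'ement global $\xi\in K(k,A,\mathbb G_m)$ s'envoie, par (\ref{flechec}), sur une classe de $H^2_{\acute{e}t}(k,M)$ dont $\theta(\xi)$ est la famille des localis\'es : la propri\'et\'e de complexe de Poitou--Tate (formule du produit pour les invariants locaux) force $\Sigma(\theta(\xi))=0$. Pour l'exactitude au milieu, une famille $(x_v)\in\Ker R$ donnerait, via les identifications locales ci-dessus, un \'el\'ement de ${\prod_v}'H^2_{\acute{e}t}(k_v,M)$ annul\'e par $\Sigma$, donc, par Poitou--Tate, \'egal \`a la localisation d'une classe globale $\eta\in H^2_{\acute{e}t}(k,M)$. \textbf{Le principal obstacle sera} de relever $\eta$ --- \`a un \'el\'ement de localisation triviale pr\`es, c'est-\`a-dire modulo le groupe de Tate--Shafarevich --- en un \'el\'ement $\xi\in K(k,A,\mathbb G_m)$ dont les localis\'es r\'ealisent la famille $(x_v)$, c'est-\`a-dire \'egalent $x_v$ dans $K(k_v,A_v,\mathbb G_m)/n$ pour $v\in S$ et ont pour image $x_v$ par $c(k_v)$ pour $v\notin S$. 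Il s'agit d'un \'enonc\'e d'approximation faible pour le groupe de Somekawa, que je m'attacherais \`a \'etablir \`a partir des g\'en\'erateurs explicites $A(L)\otimes L^*$ en construisant $\xi$ place par place tout en respectant la non-ramification presque partout ; c'est cette compatibilit\'e entre les facteurs int\'egraux $T(A_v)_{G_v}$ et les classes non ramifi\'ees qui demandera le plus de soin.
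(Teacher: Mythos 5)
Remarque préalable : l'article ne démontre pas cet énoncé, qui est cité tel quel de \cite[Theorem 4.1]{S90} (le texte ne rappelle que les flèches $c(k_v)$ de (\ref{flechec2}), en vue du Corollaire \ref{coroSomekawa}) ; votre esquisse est donc à comparer à la preuve originale de Somekawa, et non à un argument interne au papier. Cela dit, le cadre que vous proposez recoupe bien les ingrédients de \cite{S90} : dualité locale de Tate, queue de la suite de Poitou--Tate pour $M=A(1)[n]$, finitude de Katz--Lang. Vos identifications $T(A)_{G_k}\simeq A[n]_{G_k}\simeq H^0(k,M')^\vee$ et $H^2(k_v,M)\simeq A[n]_{G_v}\simeq T(A_v)_{G_v}/n$ sont correctes, et votre argument pour la surjectivité de $R$ est valable (elle est même immédiate : $T(A)_{G_k}$ est un quotient de $T(A_{v_0})_{G_{v_0}}$ pour n'importe quelle place $v_0$, la finitude venant de Katz--Lang, Proposition \ref{KL}).

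En revanche, deux points constituent de vraies lacunes. (i) Votre description du terme central est inexacte. Le sous-groupe non ramifié de $H^2(k_v,M)$ est \emph{nul} pour presque toute place, car le groupe de Galois résiduel, isomorphe à $\hat{\mathbb Z}$, est de dimension cohomologique $1$ ; le produit restreint de Poitou--Tate en degré $2$ se réduit donc à la somme directe $\bigoplus_v H^2(k_v,M)$. Les facteurs $T(A_v)_{G_v}$ de l'énoncé ne sont par conséquent pas des « sous-groupes non ramifiés » de $H^2(k_v,M)$ : ce sont des groupes entiers dont $H^2(k_v,M)$ n'est que le quotient mod $n$, et l'énoncé porte sur le produit direct complet sur $v\notin S$, sur lequel la réduction mod $n$ composante par composante ne définit même pas de flèche vers la somme directe. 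L'exactitude de la suite cohomologique mod $n$ ne se transporte donc pas formellement à la suite de Somekawa ; c'est précisément pour contrôler cette structure entière que \cite{S90} analyse les flèches de bord et de bonne réduction résumées dans le diagramme (\ref{dS}). Du reste, même aux places de $S$, identifier $K(k_v,A_v,\mathbb G_m)/n$ à $H^2(k_v,M)$ requiert l'injectivité du symbole local, que vous ne justifiez pas (la référence à \cite[Theorem 3.3]{S90} ne fournit que la surjectivité). (ii) La lacune que vous signalez vous-même est le c\oe ur de la démonstration : relever la classe globale $\eta\in H^2(k,M)$ fournie par Poitou--Tate en un élément de $K(k,A,\mathbb G_m)$ dont les localisés réalisent exactement $(x_v)$ exigerait soit la surjectivité du symbole global $K(k,A,\mathbb G_m)/n\to H^2(k,A(1)[n])$ --- qui n'est pas acquise et que \cite{S90} n'utilise pas --- soit une construction directe de symboles dans les $A(L)\otimes L^*$ à comportement local prescrit, ce qui est exactement le contenu technique de la preuve de Somekawa. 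En l'état, votre plan établit au mieux l'exactitude d'une suite de cohomologie galoisienne et ramène le théorème à un énoncé d'approximation faible pour le groupe de Somekawa qui lui est essentiellement équivalent ; il ne constitue donc pas une preuve.
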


Dans la suite, on aura besoin du corollaire suivant (cf. \cite[Th\'eor\`eme 2.1]{CTR}).

\begin{corollary}\label{coroSomekawa}
Soit $U\subset \Spec \Ok$ un ouvert non vide. Soit $\mathcal A$ un $U$-sch\'ema ab\'elien. Soit $A$ la fibre g\'en\'erique de $\mathcal A$. Soit
$$\gamma: \bigoplus\limits_{L/k\,\text{fini}}  A(L)\otimes_{\mathbb Z} L^* \to  \bigoplus\limits_{s\in U^{(1)}} \mathcal A_s(\kappa(s))$$ la fl\`eche induite par les applications (\ref{fbord}).
Alors le conoyau de $\gamma$ est fini, born\'e sup\'erieurement par $\# \mathcal A_{s_1}(\kappa(s_1))\# \mathcal A_{s_2}(\kappa(s_2)),$ o\`u $s_1,s_2$ sont deux points quelconques de $U$ de caract\'eristiques r\'esiduelles diff\'erentes.
\end{corollary}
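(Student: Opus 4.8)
Le plan est de majorer le cardinal du conoyau de $\gamma$ par celui du module de Tate coinvariant $T(A)_{G_k}$, puis d'appliquer le Corollaire \ref{corKL}.

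Je commencerais par factoriser $\gamma$ à travers le groupe de Somekawa. Comme la flèche de bord (\ref{fbord}) passe au quotient par les relations définissant $K(k,A,\mathbb G_m)$, l'application $\gamma$ induit une flèche $\bar\gamma:K(k,A,\mathbb G_m)\to \bigoplus_{s\in U^{(1)}}\mathcal A_s(\kappa(s))$, et $\coker(\gamma)=\coker(\bar\gamma)$ puisque $\pi$ est surjective. En localisant en chaque $s\in U^{(1)}$ et en utilisant le diagramme (\ref{dS}), la composante en $s$ de $\bar\gamma$ est exactement $\partial_s=\alpha_s\circ c(k_s)$, où $c(k_s)$ est l'application (\ref{flechec2}) et $\alpha_s:T(A_s)_{G_s}\to\mathcal A_s(\kappa(s))$ la surjection de (\ref{dS}). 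Autrement dit, en notant $\theta'$ la composante de l'application de réciprocité $\theta$ aux places de $U^{(1)}$, on a $\bar\gamma=(\oplus_s\alpha_s)\circ\theta'$.

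J'appliquerais ensuite le Théorème \ref{theoSomekawa} en prenant pour $S$ la réunion des places archimédiennes et des places de $\Spec\Ok$ n'appartenant pas à $U$ : les places $v\notin S$ sont alors exactement les points de $U^{(1)}$, toutes de bonne réduction car $\mathcal A$ est un schéma abélien sur $U$. On choisit $n$ divisible par $\#T(A)_{G_k}$, fini d'après le Corollaire \ref{corKL}. Posons $W=\bigoplus_{s\in U^{(1)}}T(A_s)_{G_s}$ et $Z=\prod_{v\in S}K(k_v,A_v,\mathbb G_m)/n$ ; l'image de $\theta$ est contenue dans $W\oplus Z$ (un élément n'ayant qu'un nombre fini de composantes non nulles aux bonnes places), et l'exactitude donne $\im(\theta)=\Ker(R)$. Écrivant $R(w,z)=R_W(w)+R_Z(z)$, on obtient $\im(\theta')=\pr_W(\Ker R)=\{w\in W\,:\,R_W(w)\in\im R_Z\}$, de sorte que $w\mapsto R_W(w)\bmod\im R_Z$ induit une injection
$$\coker(\theta')\hookrightarrow T(A)_{G_k}/\im(R_Z);$$
en particulier $\#\coker(\theta')\leq\#T(A)_{G_k}$.

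Enfin, chaque $\alpha_s$ étant surjective, $\oplus_s\alpha_s$ l'est aussi, donc $\coker(\bar\gamma)$ est un quotient de $\coker(\theta')$. On conclurait, pour deux points $s_1,s_2\in U$ de caractéristiques résiduelles distinctes,
$$\#\coker(\gamma)=\#\coker(\bar\gamma)\leq\#T(A)_{G_k}\leq\#\mathcal A_{s_1}(\kappa(s_1))\,\#\mathcal A_{s_2}(\kappa(s_2))$$
par le Corollaire \ref{corKL}. L'étape la plus délicate est la vérification de la factorisation $\bar\gamma=(\oplus_s\alpha_s)\circ\theta'$, c'est-à-dire la compatibilité entre la flèche de bord globale (\ref{fbord}) et les localisés $c(k_s)$ de Somekawa, qui repose sur la commutativité du diagramme (\ref{dS}) place par place.
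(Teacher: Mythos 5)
Votre démonstration est correcte et suit pour l'essentiel la même démarche que celle de l'article : factorisation de $\gamma$ en $(\oplus_s \alpha_s)\circ\theta_0\circ\pi$ via le diagramme (\ref{dS}) (votre $\theta'$ est le $\theta_0$ de l'article), majoration de $\#\coker(\theta_0)$ par $\#T(A)_{G_k}$ au moyen du Théorème \ref{theoSomekawa} appliqué avec $S$ le complémentaire des places de $U$, puis conclusion par le Corollaire \ref{corKL}. La seule différence est de présentation : là où l'article tire la majoration d'un diagramme commutatif donnant une injection $\coker(\theta_0)\hookrightarrow T(A)_{G_k}$, vous explicitez $\im(\theta')=\{w\in W : R_W(w)\in \im R_Z\}$ et plongez $\coker(\theta')$ dans le quotient $T(A)_{G_k}/\im(R_Z)$, ce qui comptabilise un peu plus soigneusement la contribution des places de $S$ et aboutit à la même borne.
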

\begin{proof}
On applique le Th\'eor\`eme \ref{theoSomekawa} avec  $S$  le compl\'ementaire de l'ensemble des places sur $U$ dans l'ensemble des  places de $k$. 
Ainsi, on a le diagramme commutatif suivant
$$
\xymatrix{
K(k, \mathcal A, \mathbb G_m)\ar[r]^{\theta_0}\ar@{=}[d] & \prod\limits_{v\in U} T(\mathcal A_v)_{G_v}  \ar[r]\ar[d] & \coker(\theta_0)\ar[r]\ar[d]& 0  \\
K(k, \mathcal A, \mathbb G_m) \ar[r]^(0.3){\theta}  & \prod\limits_{v\in U} T(\mathcal A_v)_{G_v} \oplus \prod_{v\in S} K(k_v, \mathcal A_v, \mathbb G_m)/n \ar[r]& T( A)_{G_v}\ar[r] & 0
}
$$
dans lequel la fl\`eche verticale du milieu est injective. Ainsi, $\coker(\theta_0)$ est fini, et on a une injection  
\begin{equation}\label{ginj}
\coker(\theta_0)\hookrightarrow T(A)_{G_k}.
\end{equation}

D'apr\`es (\ref{dS}), l'application $\gamma$ se factorise en  $$\bigoplus\limits_{L/k\,\text{fini}}  A(L)\otimes L^*\twoheadrightarrow K(k, \mathcal A, \mathbb G_m)  \stackrel{\theta_0}\to    \prod\limits_{v\in U} T(\mathcal A_v)_{G_v} \twoheadrightarrow \bigoplus\limits_{v\in U}  \mathcal A(\kappa(v))$$
o\`u la premi\`ere et la derni\`ere fl\`eches sont surjectives. Ainsi 
\begin{equation}\label{gsurj}
\# \coker(\gamma)\leq \# \coker(\theta_0),
\end{equation}
d'o\`u $\# \coker(\gamma)\leq \# T(A)_{G_k}$ par (\ref{ginj}) et (\ref{gsurj}).
Il ne reste qu'\`a appliquer le Corollaire \ref{corKL}.
\end{proof}

\

\section{Relèvements entiers}\label{sectionen}

Le but de cette section est de prouver le Théorème \ref{theorem:torsion} en s'appuyant sur les résultats de la section précédente.

On se donne $\mathcal S$ un sch\'ema intègre, s\'epar\'e, de type fini sur $\mathbb Z$. Soit 
$\pi : \mathcal X\lra \mathcal S$
un morphisme projectif lisse \`a fibres g\'eom\'etriquement int\`egres. 

\subsection{Préliminaires}

Soit $\eta$ le point g\'en\'erique de $\mathcal S$ et soit $\bar{\eta}$ un point g\'eom\'etrique de $\mathcal S$ au-dessus de $\eta$. Soit $K=\kappa(\eta)$ le corps résiduel de $\eta$, i.e., le corps des fonctions de $\mathcal S$. Par fonctorialité du groupe fondamental étale, on dispose d'une application naturelle
$$G_K\to \pi_1(\mathcal S, \bar\eta)$$
du groupe de Galois absolu de $K$ vers le groupe fondamental étale de $\mathcal S$.

\begin{lemma}\label{lemma:unramified}
Soit $\ell$ un nombre premier inversible sur $\mathcal S$. L'action naturelle de $G_K$ sur $\NS(\mathcal X_{\bar\eta})\otimes\Z_\ell$ se factorise par $\pi_1(\mathcal S, \bar\eta)$.
\end{lemma}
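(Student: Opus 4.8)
The plan is to realize $\NS(\mathcal X_{\bar\eta})\otimes\Z_\ell$ as a sub-$\pi_1(\mathcal S,\bar\eta)$-module of an $\ell$-adic étale cohomology group of the fibre, exploiting that the latter is a lisse sheaf on $\mathcal S$. First I would note that, since $\pi$ is projective (hence proper) and smooth and $\ell$ is invertible on $\mathcal S$, the smooth and proper base change theorem shows that for every $n$ the sheaf $R^2\pi_*\mu_{\ell^n}$ is locally constant constructible on $\mathcal S_{\acute{e}t}$, with stalk at $\bar\eta$ the group $H^2_{\acute{e}t}(\mathcal X_{\bar\eta},\mu_{\ell^n})$. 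Such a lisse sheaf is the same datum as a continuous action of $\pi_1(\mathcal S,\bar\eta)$ on this stalk, and pulling that action back along the natural map $G_K\to\pi_1(\mathcal S,\bar\eta)$ recovers the tautological Galois action. Passing to the inverse limit over $n$, the $G_K$-action on $H^2_{\acute{e}t}(\mathcal X_{\bar\eta},\Z_\ell(1))$ therefore factors through $\pi_1(\mathcal S,\bar\eta)$.

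It then suffices to prove that $\NS(\mathcal X_{\bar\eta})\otimes\Z_\ell$ is stable under this $\pi_1(\mathcal S,\bar\eta)$-action inside $H^2_{\acute{e}t}(\mathcal X_{\bar\eta},\Z_\ell(1))$. The Kummer sequence provides, functorially in an étale $\mathcal S$-scheme $T$, a cycle class map $\Pic(\mathcal X_T)/\ell^n \to H^2_{\acute{e}t}(\mathcal X_T,\mu_{\ell^n})$; sheafifying on $\mathcal S_{\acute{e}t}$ yields a morphism of sheaves whose image $\mathcal N_n\subseteq R^2\pi_*\mu_{\ell^n}$ is a subsheaf, and hence has $\pi_1(\mathcal S,\bar\eta)$-stable stalk at $\bar\eta$. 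I would then compute this stalk. Since $\mathcal X_{\bar\eta}=\varprojlim_T\mathcal X_T$, one has $\Pic(\mathcal X_{\bar\eta})=\colim_T\Pic(\mathcal X_T)$, and as stalks commute with images and with $/\ell^n$, the group $(\mathcal N_n)_{\bar\eta}$ is the image of $\Pic(\mathcal X_{\bar\eta})/\ell^n$ in $H^2_{\acute{e}t}(\mathcal X_{\bar\eta},\mu_{\ell^n})$. Because $\Pic^0(\mathcal X_{\bar\eta})$ is $\ell$-divisible, being the group of rational points of an abelian variety over an algebraically closed field, this image coincides with the image of $\NS(\mathcal X_{\bar\eta})/\ell^n$, and the Kummer sequence shows the latter map to be injective; thus $(\mathcal N_n)_{\bar\eta}\cong\NS(\mathcal X_{\bar\eta})/\ell^n$ as a $\pi_1(\mathcal S,\bar\eta)$-submodule of $H^2_{\acute{e}t}(\mathcal X_{\bar\eta},\mu_{\ell^n})$.

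Finally I would pass to the inverse limit over $n$. As $\NS(\mathcal X_{\bar\eta})$ is finitely generated, $\varprojlim_n \NS(\mathcal X_{\bar\eta})/\ell^n=\NS(\mathcal X_{\bar\eta})\otimes\Z_\ell$, and the compatible, $\pi_1(\mathcal S,\bar\eta)$-equivariant inclusions $(\mathcal N_n)_{\bar\eta}\hookrightarrow(R^2\pi_*\mu_{\ell^n})_{\bar\eta}$ assemble into a $\pi_1(\mathcal S,\bar\eta)$-stable inclusion $\NS(\mathcal X_{\bar\eta})\otimes\Z_\ell\hookrightarrow H^2_{\acute{e}t}(\mathcal X_{\bar\eta},\Z_\ell(1))$. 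Combined with the first paragraph, this gives the desired factorization of the $G_K$-action through $\pi_1(\mathcal S,\bar\eta)$.

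I expect the main obstacle to be precisely the sheaf-theoretic step: one must verify that the relative cycle class genuinely defines a \emph{subsheaf} of $R^2\pi_*\mu_{\ell^n}$ with stalk exactly $\NS(\mathcal X_{\bar\eta})/\ell^n$. This is what guarantees stability under the full group $\pi_1(\mathcal S,\bar\eta)$, and not merely under the image of $G_K$ — a distinction that genuinely matters here, since when $\mathcal S$ is not normal the map $G_K\to\pi_1(\mathcal S,\bar\eta)$ need not be surjective, so a naive argument bounding only the kernel would not suffice to produce the factorization as stated.
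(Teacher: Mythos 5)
Your first paragraph, combined with the Kummer-theoretic identification of $\NS(\mathcal X_{\bar\eta})\otimes\Z_\ell$ with a $G_K$-submodule of $H^2_{\acute{e}t}(\mathcal X_{\bar\eta},\Z_\ell(1))$, is in substance the paper's entire proof: the cycle class map is a $G_K$-equivariant injection, and the $G_K$-action on the target factors through $\pi_1(\mathcal S,\bar\eta)$ because the target is the stalk at $\bar\eta$ of the locally constant sheaf $R^2\pi_*\Z_\ell(1)$. The problem lies in your second and third paragraphs, at precisely the step you single out as the crux. The inference ``$\mathcal N_n$ is a subsheaf of the locally constant sheaf $R^2\pi_*\mu_{\ell^n}$, hence its stalk at $\bar\eta$ is $\pi_1(\mathcal S,\bar\eta)$-stable'' is false. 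The stalk of a subsheaf at $\bar\eta$ is a colimit of sections over \'etale neighbourhoods of $\bar\eta$, and such neighbourhoods only dominate dense open subsets of $\mathcal S$; they carry no information about monodromy elements outside the image of $G_K$, which exist exactly in the non-normal situation you are trying to cover. Concretely, take for $\mathcal S$ an integral nodal cubic over a finite field in which $\ell$ is invertible, let $c:\mathcal S'\to\mathcal S$ be the connected \'etale double cover by a cycle of two projective lines, and let $\mathcal F=c_*(\Z/\ell)$: this sheaf is locally constant of rank $2$, and the monodromy of $\pi_1(\mathcal S,\bar\eta)$ exchanges the two basis vectors $e_1,e_2$ of $\mathcal F_{\bar\eta}$. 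If $j:V\hookrightarrow\mathcal S$ is the complement of the node, then $\mathcal F|_V\cong(\Z/\ell)e_1\oplus(\Z/\ell)e_2$ is constant, and $\mathcal G:=j_!\bigl((\Z/\ell)e_1\bigr)$ is a subsheaf of $\mathcal F$ whose stalk at $\bar\eta$ equals $(\Z/\ell)e_1$, visibly not stable under the swap. Being a subsheaf only yields stability under the image of $G_K$, which is no more than the $G_K$-equivariance you already had; your computation of the stalk of $\mathcal N_n$ is correct but beside the point.

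Moreover, the gap cannot be repaired within your strategy, because the intermediate statement it requires --- that $\NS(\mathcal X_{\bar\eta})\otimes\Z_\ell$ is stable under the full monodromy action on $H^2_{\acute{e}t}(\mathcal X_{\bar\eta},\Z_\ell(1))$ --- can itself fail for non-normal $\mathcal S$. A loop through a node acts as a composition of cospecialization isomorphisms along the two branches, twisted by the isomorphism gluing the two fibres, and cospecialization carries $\NS(\mathcal X_{\bar\eta})\otimes\Z_\ell$ into, but in general not onto, $\NS(\mathcal X_{\bar s})\otimes\Z_\ell$. When the N\'eron--Severi rank jumps at the node one can exploit this: glue a non-isotrivial family of abelian surfaces $E_t\times E_t$ at two fibres where $E_{t_1}\cong E_{t_2}$ has complex multiplication by $i$, using the gluing isomorphism $\psi\times(\psi\circ i)$ with $\psi:E_{t_1}\stackrel{\sim}{\to}E_{t_2}$; the resulting loop sends the diagonal class to a class whose K\"unneth $(1,1)$-component is a conjugate of $T_\ell(i)$, which does not lie in $\mathrm{End}(E_{\bar\eta})\otimes\Z_\ell=\Z_\ell$ because the generic fibre has no complex multiplication. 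What your first paragraph (equivalently, the paper's argument) genuinely establishes is that $\Ker\bigl(G_K\to\pi_1(\mathcal S,\bar\eta)\bigr)$ acts trivially on $\NS(\mathcal X_{\bar\eta})\otimes\Z_\ell$, i.e.\ the action factors through the closed image of $G_K$ in $\pi_1(\mathcal S,\bar\eta)$. That is the full statement when $\mathcal S$ is normal (then $G_K\to\pi_1(\mathcal S,\bar\eta)$ is surjective), and it also suffices for the way the lemma is used: the action on the finitely generated group $\NS(\mathcal X_{\bar\eta})$ has finite image, so one may choose an open normal subgroup $N$ of $\pi_1(\mathcal S,\bar\eta)$ such that $N\cap\mathrm{im}(G_K)$ acts trivially, and the connected cover corresponding to $N$ does the job required in Lemme \ref{lemma:triv-NS}. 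So your diagnosis of the delicate point --- non-normality of $\mathcal S$ --- is accurate, but your resolution of it is not: where your extra sheaf-theoretic argument is valid it is unnecessary, and where it would be needed it is false.
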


\begin{proof}
L'application classe de cycle définit une injection $G_K$-équivariante:
$$\NS(\mathcal X_{\bar\eta})\otimes\Z_\ell\to H^2_{\acute{e}t}(\mathcal X_{\bar\eta}, \Z_\ell(1)),$$
ce qui prouve le résultat car l'action de $G_K$ sur $H^2_{\acute{e}t}(\mathcal X_{\bar\eta}, \Z_\ell(1))$ se factorise par $\pi_1(\mathcal S, \bar\eta)$ car $H^2_{\acute{e}t}(\mathcal X_{\bar\eta}, \Z_\ell(1))$ est la fibre en $\bar\eta$ du faisceau localement constant $R^2\pi_*\Z_\ell(1).$
\end{proof}

\begin{lemma}\label{lemma:triv-NS}
Soit $\ell$ un nombre premier inversible sur $\mathcal S$. Il existe un revêtement
$$p : \mathcal S'\to \mathcal S$$
fini étale connexe tel que, si $\eta'$ est le point générique de $\mathcal S'$ et $\bar\eta'$ est un point géométrique de $\mathcal S'$ au-dessus de $\eta'$, alors l'action du groupe de Galois absolu $G_{\kappa(\eta')}$ sur le groupe $\NS(\mathcal X_{\bar \eta'})\otimes\Z_\ell$ est triviale.
\end{lemma}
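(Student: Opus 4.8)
The plan is to produce $\mathcal{S}'$ as the connected finite étale covering of $\mathcal{S}$ cut out by the kernel of the $\ell$-adic Néron--Severi representation. By Lemma \ref{lemma:unramified} the action of $G_K$ on $\NS(\mathcal{X}_{\bar\eta})\otimes\Z_\ell$ factors through a continuous homomorphism
$$\rho\colon \pi_1(\mathcal{S},\bar\eta)\lra \mathrm{Aut}\big(\NS(\mathcal{X}_{\bar\eta})\otimes\Z_\ell\big).$$
The first---and really only---substantive point is that the image of $\rho$ is finite. This does \emph{not} follow from continuity alone, since the target contains infinite pro-$\ell$ subgroups of $GL$-type; instead I would argue at the level of the integral group $\NS(\mathcal{X}_{\bar\eta})$. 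It is finitely generated, and since $\Pic(\mathcal{X}_{\bar\eta})=\varinjlim_{L}\Pic(\mathcal{X}_{\eta_L})$, the limit running over the finite subextensions $L$ of $\bar K/K$, each of a finite set of generators of $\NS(\mathcal{X}_{\bar\eta})$ is defined over a finite extension of $K$ and hence fixed by an open subgroup of $G_K$. The intersection of these open subgroups is open, so the $G_K$-action on $\NS(\mathcal{X}_{\bar\eta})$ factors through a finite quotient; tensoring with $\Z_\ell$ shows that $\rho$ likewise factors through this finite quotient, and in particular has finite image.

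Given this, let $H=\Ker\rho$. As $\rho$ is continuous with finite image, $H$ is an open (normal) subgroup of $\pi_1(\mathcal{S},\bar\eta)$. Since $\mathcal{S}$ is integral, hence connected, the Galois correspondence for the étale fundamental group attaches to $H$ a connected finite étale covering $p\colon\mathcal{S}'\to\mathcal{S}$, together with a geometric point $\bar\eta'$ above $\bar\eta$, such that $p_*\,\pi_1(\mathcal{S}',\bar\eta')=H$ inside $\pi_1(\mathcal{S},\bar\eta)$. The scheme $\mathcal{S}'$ is then integral and of finite type over $\mathbb{Z}$, and $\ell$ remains invertible on it.

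To conclude, I would choose $\bar\eta'$ lying over the generic point $\eta'$ of $\mathcal{S}'$ and mapping to $\bar\eta$; then $\mathcal{X}_{\bar\eta'}\cong\mathcal{X}_{\bar\eta}$ and $\NS(\mathcal{X}_{\bar\eta'})=\NS(\mathcal{X}_{\bar\eta})$ compatibly with the Galois actions. Functoriality of the fundamental group gives a commutative square relating $G_{\kappa(\eta')}\to\pi_1(\mathcal{S}',\bar\eta')$ with $G_K\to\pi_1(\mathcal{S},\bar\eta)$, so that the image of $G_{\kappa(\eta')}$ in $\pi_1(\mathcal{S},\bar\eta)$ lands in $H=\pi_1(\mathcal{S}',\bar\eta')$. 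Hence the $G_{\kappa(\eta')}$-action on $\NS(\mathcal{X}_{\bar\eta'})\otimes\Z_\ell$ factors through $\rho|_H$, which is trivial by the very definition of $H$; this is exactly the assertion of the lemma. The only care required is the bookkeeping with base points and the identification of the geometric generic fibres of $\mathcal{X}\to\mathcal{S}$ and of $\mathcal{X}\times_{\mathcal{S}}\mathcal{S}'\to\mathcal{S}'$; the finiteness of $\mathrm{Im}\,\rho$ established above is the genuine input.
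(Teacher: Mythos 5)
Your proof is correct, and its skeleton is the same as the paper's: obtain the representation $\rho$ of $\pi_1(\mathcal S,\bar\eta)$ on $\NS(\mathcal X_{\bar\eta})\otimes\Z_\ell$ from Lemme \ref{lemma:unramified}, show its kernel is open, take for $\mathcal S'$ the connected finite étale cover corresponding to that kernel, and conclude by the base-point bookkeeping you describe. The interesting difference is at the unique substantive step. The paper's proof is one sentence: since the profinite group $\pi_1(\mathcal S,\bar\eta)$ acts continuously on the finitely generated $\Z_\ell$-module $\NS(\mathcal X_{\bar\eta})\otimes\Z_\ell$, the action factors through a finite group. Read literally, this invokes a false principle, exactly as you observe: a continuous representation of a profinite group on a finitely generated $\Z_\ell$-module can have infinite image (e.g. $\widehat{\Z}\to\Z_\ell^\times=\mathrm{Aut}_{\Z_\ell}(\Z_\ell)$ sending $1$ to $1+\ell$, or any cyclotomic character). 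The statement only becomes correct when the continuity is taken with respect to the \emph{discrete} topology on the integral group $\NS(\mathcal X_{\bar\eta})$, which is what your argument supplies: each element of a finite generating set of $\NS(\mathcal X_{\bar\eta})$ is defined over a finite extension of $\kappa(\eta)$, hence has open stabilizer in $G_{\kappa(\eta)}$, so the kernel of the action on $\NS(\mathcal X_{\bar\eta})$ is open, the image is finite, and this finiteness persists after $\otimes\,\Z_\ell$. So yours is not a different route but the correct completion of the step the paper leaves implicit (or misstates). One caveat you inherit from the paper rather than introduce: to pass from finiteness of the image of $G_{\kappa(\eta)}$ to finiteness of the image of $\rho$ on all of $\pi_1(\mathcal S,\bar\eta)$, one needs the image of $G_{\kappa(\eta)}$ in $\pi_1(\mathcal S,\bar\eta)$ to be dense (automatic when $\mathcal S$ is normal, e.g. after normalizing); this point is already implicit in the formulation of Lemme \ref{lemma:unramified}, so it is a shared imprecision, not a gap specific to your argument.
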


\begin{proof}
Le lemme \ref{lemma:unramified} montre que le groupe profini $\pi_1(\mathcal S, \bar\eta)$ agit continûment sur le $\mathbb Z_{\ell}$-module de type fini $\NS(\mathcal X_{\bar\eta})\otimes \mathbb Z_{\ell}$, cette action se factorise donc par un groupe fini, ce qui conclut en prenant pour $\mathcal S'$ le revêtement de $\mathcal S$ correspondant à ce quotient fini du groupe fondamental étale.
\end{proof}

\begin{lemma}\label{lemma:finite-index}
Soit $\eta'$ le point générique de $\mathcal S'$ et $\bar\eta'$ un point géométrique au-dessus de $\eta'$.
Avec les notations du Lemme \ref{lemma:triv-NS}, le conoyau de l'inclusion naturelle 
$$i_{\eta'} : \NS(\mathcal X_{\bar \eta'})^{G_{\kappa(\eta')}}\to \NS(\mathcal X_{\bar \eta'})$$
est de torsion.
\end{lemma}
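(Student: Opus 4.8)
Le plan est de démontrer l'énoncé plus fort selon lequel le conoyau de $i_{\eta'}$ est \emph{fini}, en exhibant un entier explicite qui l'annule. Écrivons $A=\NS(\mathcal X_{\bar\eta'})$ et $G=G_{\kappa(\eta')}$. Comme $\mathcal X_{\bar\eta'}$ est projective et lisse sur un corps algébriquement clos, le groupe $A$ est abélien de type fini ; soit $T\subset A$ son sous-groupe de torsion, qui est donc fini, et posons $N=\#T$.

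Je commencerais par transporter l'énoncé de trivialité du Lemme \ref{lemma:triv-NS}, qui porte sur $A\otimes\Z_\ell$, vers le quotient libre $A/T$. La projection $A\otimes\Z_\ell\to (A/T)\otimes\Z_\ell$ est $G$-équivariante et surjective, donc $G$ agit trivialement sur $(A/T)\otimes\Z_\ell=\Z_\ell^{\,r}$, où $r$ désigne le rang de $A$. L'application naturelle $A/T\to (A/T)\otimes\Z_\ell$ est $G$-équivariante et \emph{injective}, car $A/T$ est libre et $\Z\hookrightarrow\Z_\ell$. Ainsi $G$ agit aussi trivialement sur $A/T$ ; autrement dit, $(g-1)a\in T$ pour tout $g\in G$ et tout $a\in A$. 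On peut d'ailleurs remarquer directement que le noyau de $A\to A\otimes\Z_\ell$ est la torsion première à $\ell$ de $A$, de sorte que la trivialité de l'action modulo ce noyau force déjà $(g-1)a$ à être de torsion.

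La conclusion est alors immédiate. Pour tout $a\in A$ et tout $g\in G$, on a $(g-1)(Na)=N\,(g-1)a=0$, puisque $(g-1)a\in T$ et que $N=\#T$ annule $T$. Donc $N\cdot A\subseteq A^{G}$, et le conoyau de $i_{\eta'}$ est un quotient de $A/NA\cong(\Z/N)^{\,r}\oplus T$, qui est fini ; en particulier il est de torsion, comme annoncé.

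L'argument est élémentaire une fois la trivialité sur $A\otimes\Z_\ell$ transportée sur la partie libre ; le seul point demandant un peu d'attention est que $G$ est un groupe profini infini, de sorte qu'il ne faut pas chercher à dire que l'action de $G$ se factorise par un groupe fini pour en déduire la finitude de l'indice par un argument de transfert. La borne provient plutôt uniformément du fait que toutes les différences $(g-1)a$ vivent dans l'unique groupe fini $T$, ce qui fait qu'un seul entier $N$ convient simultanément pour tout $g$. C'est la finitude du sous-groupe de torsion du groupe de Néron-Severi — et non sa seule génération finie — qui rend ceci possible.
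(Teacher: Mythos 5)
Votre preuve est correcte et suit essentiellement la même démarche que celle de l'article : dans les deux cas, le point clé est que, l'action étant triviale sur $\NS(\mathcal X_{\bar\eta'})\otimes\Z_\ell$, toute différence $\sigma(\alpha)-\alpha$ appartient à un groupe de torsion fini (le noyau de $\NS(\mathcal X_{\bar\eta'})\to\NS(\mathcal X_{\bar\eta'})\otimes\Z_\ell$ dans l'article, votre $T$ après réduction à $A/T$), et il suffit alors de multiplier par un entier annulant ce groupe pour tomber dans les invariants. Votre remarque directe sur le noyau de $A\to A\otimes\Z_\ell$ est précisément l'argument de l'article ; le détour par $A/T$ et la mise en garde finale sont superflus mais sans conséquence (notez d'ailleurs que l'action se factorise bel et bien par un groupe fini, les automorphismes de $A$ triviaux sur $T$ et sur $A/T$ formant un groupe fini).
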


\begin{proof}
Le noyau de l'application 
$$\NS(\mathcal X_{\bar\eta'})\to \NS(\mathcal X_{\bar\eta'})\otimes\Z_\ell$$
est le sous-groupe de torsion première à $\ell$. Soit $M$ un entier strictement positif qui annule ce noyau. Alors si $\alpha$ est un élément de $\NS(\mathcal X_{\bar \eta'})$ et si $\sigma$ est un élément de $G_{\kappa(\eta')}$, l'image de $\alpha$ dans $\NS(\mathcal X_{\bar\eta'})\otimes\Z_\ell$ est invariante par $\sigma$ par construction, donc on a :
$$M(\sigma(\alpha)-\alpha)=0.$$
Cela montre que $M\alpha$ est invariant sous $G_{\kappa(\eta')}$, ce qui conclut.
%
%
%

%
\end{proof}

\begin{lemma}\label{lemma:torsion-sp}
Supposons que le morphisme structurel $\mathcal S\to\Spec\Z$ n'est pas surjectif, i.e. qu'il existe un nombre premier $\ell$ inversible sur $\mathcal S$. Alors il existe un rev\^etement
$$p : \mathcal S'\to \mathcal S$$
fini étale connexe tel que, si $\eta'$ est le point générique de $\mathcal S'$ le conoyau de l'application de spécialisation 
$$\Pic(\mathcal X\times_{\mathcal S}\mathcal S')\to \NS(\mathcal X_{\bar \eta'})$$
est de torsion.
\end{lemma}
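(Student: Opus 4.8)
The plan is to take for $p:\mathcal S'\to\mathcal S$ the finite étale connected cover furnished by Lemma \ref{lemma:triv-NS} (allowing myself, if convenient, to replace it by a further connected finite étale cover), and to check that it already does the job. Write $F=\kappa(\eta')$, let $V=\mathcal X_{\eta'}$ be the generic fibre of $\mathcal X':=\mathcal X\times_{\mathcal S}\mathcal S'$ over $\mathcal S'$, and $\bar V=\mathcal X_{\bar\eta'}$. Any line bundle on $\mathcal X'$ restricts to a line bundle on $V$, whose class in $\NS(\bar V)$ is fixed by $G_F=G_{\kappa(\eta')}$, so the specialization map factors as
$$\Pic(\mathcal X')\to\Pic(V)\to \NS(\bar V)^{G_F}\hookrightarrow\NS(\bar V).$$
By Lemma \ref{lemma:finite-index} the last inclusion has finite cokernel; since $\NS(\bar V)$ is finitely generated, it therefore suffices to prove that the image of $\Pic(\mathcal X')$ in $\NS(\bar V)^{G_F}$ has finite index, i.e. that $\Pic(\mathcal X')\to\NS(\bar V)^{G_F}$ has torsion cokernel.

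The first step is to show that a finite-index subgroup of $\NS(\bar V)^{G_F}$ already lifts to $\Pic(V)$. The exact sequence of $G_F$-modules $0\to\Pic^0(\bar V)\to\Pic(\bar V)\to\NS(\bar V)\to 0$ yields a connecting map $\NS(\bar V)^{G_F}\to H^1(F,\Pic^0(\bar V))$, while the Hochschild--Serre sequence for $\mathbb G_m$ gives $0\to\Pic(V)\to\Pic(\bar V)^{G_F}\to \mathrm{Br}(F)$ (Hilbert 90 kills the $H^1$-term, $\bar V$ being proper and geometrically integral so that $H^0(\bar V,\mathbb G_m)=\bar F^{\ast}$). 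Now $H^1(F,\Pic^0(\bar V))$ is a Weil--Châtelet group of an abelian variety and $\mathrm{Br}(F)$ is a Brauer group, so both are torsion; as $\NS(\bar V)$ is finitely generated there is a single integer $m>0$ with $m\cdot\NS(\bar V)^{G_F}$ contained in the image of $\Pic(V)\to\NS(\bar V)^{G_F}$. Thus a finite-index subgroup of $\NS(\bar V)^{G_F}$ is represented by genuine line bundles on the generic fibre $V$.

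The main obstacle, and the point at which the triviality of the monodromy (rather than a merely generic statement) is essential, is to extend these line bundles from $V$ to all of $\mathcal X'$. Each such $L\in\Pic(V)$ defines a section of the relative Picard scheme $\Pic_{\mathcal X'/\mathcal S'}$ over $\eta'$, which spreads out to a dense open $U\subset\mathcal S'$. Because Lemma \ref{lemma:triv-NS} makes $G_F$, hence $\pi_1(\mathcal S',\bar\eta')$, act trivially on $\NS(\bar V)\otimes\Z_\ell$ and hence on $\NS(\bar V)/\mathrm{tors}$, the associated Néron--Severi section is $\pi_1(\mathcal S',\bar\eta')$-invariant and so extends across $\mathcal S'$ as a section of the relative Néron--Severi sheaf on the locus where the latter is lisse; the $\Pic^0$-component then extends over the codimension-one points by the valuative criterion applied to the proper group scheme $\Pic^0_{\mathcal X'/\mathcal S'}$, and the passage from a section of the relative Picard functor to an actual element of $\Pic(\mathcal X')$ costs only an obstruction class in $H^2_{\acute{e}t}(\mathcal S',\mathbb G_m)$ (Leray for $\mathbb G_m$), which lies in the torsion group $\mathrm{Br}(\mathcal S')$. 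Every obstruction encountered — $H^1(F,\Pic^0(\bar V))$, $\mathrm{Br}(F)$, and $\mathrm{Br}(\mathcal S')$ — is torsion, so the image of $\Pic(\mathcal X')$ contains a finite-index subgroup of $\NS(\bar V)^{G_F}$, giving the claim. The genuinely delicate point, possibly requiring passage to the normalisation or to a further connected finite étale cover in order to carry the Néron--Severi section across the non-lisse locus and across loci of codimension $\geq 2$, is exactly this global extension; what the construction of $\mathcal S'$ in Lemma \ref{lemma:triv-NS} buys is that no ramified base change is needed there, which is the whole purpose of taking $p$ finite étale.
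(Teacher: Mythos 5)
Your reduction and the first half of the argument are correct and essentially the paper's: you take the cover $\mathcal S'$ produced by Lemma \ref{lemma:triv-NS}, use Lemma \ref{lemma:finite-index} to reduce to showing that the image of $\Pic(\mathcal X')$ in $\NS(\bar V)^{G_F}$ has finite index, and prove that a finite-index subgroup of $\NS(\bar V)^{G_F}$ is reached by $\Pic(V)$. For this last point the paper takes the product of the Galois conjugates of lifts defined over a finite extension $K/F$ and kills the descent obstruction by the index of the generic fibre, while you invoke torsionness of $H^1(F,\Pic^0(\bar V))$ and of $\mathrm{Br}(F)$; both are valid, yours being a slightly less effective but cleaner variant. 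The genuine gap is in the remaining step, the passage from $\Pic(V)$ to $\Pic(\mathcal X')$.

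Your mechanism for that step does not work as stated. (i) A section of $\Pic_{\mathcal X'/\mathcal S'}$ over an open subset has no canonical decomposition into an ``NS-section'' plus a ``$\Pic^0$-component'': the fibrewise sequence $0\to\Pic^0\to\Pic\to\NS\to 0$ does not split, so ``extend the two components separately'' is not an operation you can perform. (ii) The valuative criterion only extends sections across codimension-one points; crossing codimension $\geq 2$ requires Weil's extension theorem, hence regularity (or smoothness) of $\mathcal S'$ --- but $\mathcal S$ is merely integral, separated, of finite type over $\Z$, and a finite étale cover of a non-normal scheme is in general still non-normal: étale base change never improves singularities. (iii) The obstruction to lifting a global section of $\Pic_{\mathcal X'/\mathcal S'}$ to an element of $\Pic(\mathcal X')$ lies in $H^2_{\acute{e}t}(\mathcal S',\mathbb G_m)$, and this group is known to be torsion only for regular schemes (Grothendieck); calling it ``the torsion group $\mathrm{Br}(\mathcal S')$'' assumes exactly what is missing. (iv) Your fallback --- replacing $\mathcal S'$ by its normalisation --- is not permitted: the lemma must produce a cover that is finite \emph{étale} over $\mathcal S$, a property used crucially later (Proposition \ref{proposition:relev-tors} relies on the Hermite--Minkowski finiteness of extensions of $k$ unramified over $U$), and a normalisation map is finite but not étale. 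The paper avoids all of this machinery: only the finitely many generators $l_i$ of $\NS(\bar V)^{G_F}$ need to be reached; after the descent step each is represented by an actual line bundle $L'_{i,\eta'}$ on the generic fibre $\mathcal X_{\eta'}$, and such a bundle is lifted directly under the restriction map $\Pic(\mathcal X')\to\Pic(\mathcal X_{\eta'})$, by spreading out over a nonempty open $W\subseteq\mathcal S'$ and extending across the boundary via closures of divisors. In particular your guiding idea --- that triviality of the monodromy is what makes the extension from $V$ to $\mathcal X'$ possible --- is misplaced: monodromy triviality enters only through Lemma \ref{lemma:finite-index}, to ensure that $\NS(\bar V)^{G_F}$ has finite index in $\NS(\bar V)$; it plays no role in the extension step, where neither the relative Picard scheme, nor properness of $\Pic^0$, nor any Brauer group of $\mathcal S'$ is needed.
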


\begin{proof}
On choisit $p : \mathcal S'\to \mathcal S$ comme dans le lemme \ref{lemma:finite-index} dont on garde les notations. On note $\mathcal X'=\mathcal X\times_{\mathcal S}\mathcal S'.$

Soit $\bar\eta'$ un point géométrique de $\mathcal S'$ au-dessus de $\eta'$. Soient $L_{1, \bar\eta'}, \ldots, L_{\rho, \bar\eta'}$ des éléments de $\Pic(\mathcal X_{\bar\eta'})$ dont les images $l_i$ dans $\NS(\mathcal X_{\bar\eta'})^{G_{\kappa(\eta')}}$ forment une famille génératrice de $\NS(\mathcal X_{\bar\eta'})^{G_{\kappa(\eta')}}$.

On peut trouver une extension finie Galoisienne $K$ du corps résiduel $\kappa(\eta')$ telle que les $L_{i, \bar\eta'}$ soient tous définis sur $K$. Soit $L_{i, \eta'}\in \Pic(\mathcal X_{\bar\eta})^{G_{\kappa(\eta')}}$ le produit tensoriel des conjugués de $L_{i, \bar\eta'}$ par les éléments du groupe de Galois de $K/\kappa(\eta')$. Le conoyau de la flèche naturelle 
$$\Pic(\mathcal X_{\eta'})\to\Pic(\mathcal X_{\bar\eta})^{G_{\kappa(\eta')}}$$
est tué par l'indice $i_{\eta'}$ de la fibre générique $\mathcal X_{\eta'}$. En particulier, le fibré en droites 
$$L'_{i, \eta'}:=L_{i, \eta'}^{\otimes i_{\eta'}}$$
provient de $\Pic(\mathcal X_{\eta'})$. Soit $L'_i$ un élément de $\Pic(\mathcal X')$ qui s'envoie sur $L'_{i, \eta'}$. 

Soit $r_{\eta'}$ l'application composée
$$\Pic(\mathcal X')\to \Pic(\mathcal X_{\eta'})\to \NS(\mathcal X_{\bar\eta'}).$$
Par hypothèse, l'image $l_i$ de $L_{i, \bar\eta'}$ dans $\NS(\mathcal X_{\bar\eta'})$ est Galois-invariante. Par construction, on a donc 
$$r_{\eta'}(L'_i)=[K:\kappa(\eta')]i_{\eta'} l_i.$$
En particulier, les images des $L'_i$ par $r_{\eta'}$ engendrent un sous-groupe qui contient 
$$[K:\kappa(\eta')] i_{\eta'}\NS(\mathcal X_{\bar\eta'})^{G_{\kappa(\eta')}}.$$
Appliquant le Lemme \ref{lemma:finite-index}, on trouve que le conoyau de 
$$r_{\eta'} : \Pic(\mathcal X')\lra \NS(\mathcal X_{\bar\eta'})$$
est de torsion.
\end{proof}

\begin{proposition}\label{proposition:reduc-etale}
Supposons  que  $H^2(\mathcal X_s,\mathcal O_{\mathcal X_s})=0$ pour tout point $s\in \mathcal S$.
Supposons que le morphisme structurel $\mathcal S\to\Spec\Z$ n'est pas surjectif, i.e. qu'il existe un nombre premier $\ell$ inversible sur $\mathcal S$. Alors il existe un rev\^etement
$$p : \mathcal S'\to \mathcal S$$
fini étale connexe, et un entier $N$ tel que pour tout point g\'eom\'etrique $\bar s'$ de $\mathcal S'$, le conoyau de l'application de spécialisation 
$$\Pic(\mathcal X\times_{\mathcal S}\mathcal S')\to \NS(\mathcal X_{\bar s'})$$
est tué par $N$.
\end{proposition}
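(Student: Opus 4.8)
Le plan est de factoriser l'application de spécialisation par le point générique et de se ramener à deux estimations, l'une au-dessus de $\eta'$ et l'autre de nature « horizontale ». On prendra pour $p:\mathcal S'\to\mathcal S$ le revêtement déjà construit au Lemme \ref{lemma:torsion-sp}, et l'on posera $\mathcal X'=\mathcal X\times_{\mathcal S}\mathcal S'$, de point générique $\eta'$ et de morphisme structurel $\pi':\mathcal X'\to\mathcal S'$. Le groupe $\NS(\mathcal X_{\bar\eta'})$ étant de type fini, le conoyau de $\Pic(\mathcal X')\to\NS(\mathcal X_{\bar\eta'})$, qui est de torsion d'après ce lemme, est en réalité fini ; il existe donc un entier $N_1>0$ tel que $N_1\,\NS(\mathcal X_{\bar\eta'})$ soit contenu dans l'image de $\Pic(\mathcal X')$. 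On observera ensuite que, pour tout point géométrique $\bar s'$ de $\mathcal S'$, l'application $\Pic(\mathcal X')\to\NS(\mathcal X_{\bar s'})$ se factorise par l'application de spécialisation $\mathrm{sp}:\NS(\mathcal X_{\bar\eta'})\to\NS(\mathcal X_{\bar s'})$, puisqu'une classe provenant de $\mathcal X'$ se restreint de manière compatible aux deux fibres. Il suffira donc de borner uniformément, indépendamment de $\bar s'$, l'exposant du conoyau de $\mathrm{sp}$ par un entier $N_2$ : le conoyau de l'énoncé sera alors tué par $N=N_1N_2$.

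Pour contrôler $\mathrm{sp}$, on exploitera l'hypothèse $H^2(\mathcal X_s,\mathcal O_{\mathcal X_s})=0$, c'est-à-dire $b_2=\rho$ sur chaque fibre. Fixant $\ell$ inversible sur $\mathcal S$, l'application classe de cycle $\NS(\mathcal X_{\bar t})\otimes\Z_\ell\hookrightarrow H^2_{\acute{e}t}(\mathcal X_{\bar t},\Z_\ell(1))$ est injective et devient, sous cette hypothèse, un isomorphisme après $\otimes\,\Q_\ell$ ; elle est par ailleurs compatible aux isomorphismes de spécialisation du faisceau lisse $R^2\pi'_*\Z_\ell(1)$. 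On en déduira déjà que $\mathrm{sp}$ est injective modulo torsion et de conoyau fini. De plus le sous-groupe de torsion de $\NS(\mathcal X_{\bar s'})$ est un invariant cohomologique, de cardinal constant dans la famille (comme les $n_{ij}$ de la section \ref{subsubsection:invariants}), donc d'exposant $e$ borné indépendamment de $\bar s'$.

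Le point délicat sera de majorer, uniformément en $\bar s'$, l'indice de l'image de $\mathrm{sp}$ dans la partie libre de $\NS(\mathcal X_{\bar s'})$ : la difficulté est que $\bar s'$ peut être de caractéristique résiduelle arbitraire, de sorte qu'une seule réalisation $\ell$-adique ne fournit pas de borne valable en toutes les places. On propose de contourner cet obstacle en utilisant la forme d'intersection, qui est indépendante de la caractéristique et préservée par spécialisation. Choisissant un faisceau relativement ample de classe $H$ et notant $d$ la dimension relative, la constance des nombres d'intersection dans la famille plate sur le schéma connexe $\mathcal S'$ montre que l'accouplement $(\alpha,\beta)\mapsto \alpha\cdot\beta\cdot H^{d-2}$ sur $\NS(\mathcal X_{\bar t})$ est préservé par $\mathrm{sp}$, qui est donc une isométrie modulo torsion ; cette forme est non dégénérée sur $\NS\otimes\Q$ par le théorème de l'indice de Hodge. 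En notant $D$ son discriminant sur $\NS(\mathcal X_{\bar\eta'})$, qui est un entier non nul fixe indépendant de $\bar s'$, et $m$ l'indice cherché, la relation $|D|=m^2|D'|$ (où $D'$ est le discriminant sur $\NS(\mathcal X_{\bar s'})$) jointe à $|D'|\geq 1$ donnera $m\leq\sqrt{|D|}$. L'exposant du conoyau de $\mathrm{sp}$ sera alors majoré par $N_2=|D|\cdot e$, ce qui conclura avec $N=N_1N_2$.
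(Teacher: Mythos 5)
Your first step coincides with the paper's: you take the same covering $\mathcal S'$ from Lemma \ref{lemma:torsion-sp}, factor $\Pic(\mathcal X')\to\NS(\mathcal X_{\bar s'})$ through $r_{\eta'}:\Pic(\mathcal X')\to\NS(\mathcal X_{\bar\eta'})$, and kill $\coker(r_{\eta'})$ by an integer $N_1$ (torsion plus finite generation gives finiteness). The divergence -- and the genuine gap -- is in the second step. The paper proves that the specialization map $r^{\NS}_{s'}:\NS(\mathcal X_{\bar\eta'})\to\NS(\mathcal X_{\bar s'})$ is \emph{surjective}; this is exactly where the hypothesis $H^2(\mathcal X_{s'},\mathcal O_{\mathcal X_{s'}})=0$ enters, via infinitesimal lifting of line bundles and Grothendieck's algebraization theorem (\cite[Lemme 3.1]{CTR}, \cite{GrothendieckGF}), an argument valid in every characteristic; the cokernel of $r_{s'}$ is then a quotient of $\coker(r_{\eta'})$ and the proof is over. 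You instead try to bound the \emph{index} of the image of the specialization map by a discriminant argument. That argument presupposes that $\NS(\mathcal X_{\bar\eta'})$ and $\NS(\mathcal X_{\bar s'})$ have the same rank, which you justify by ``$H^2(\mathcal O)=0$, c'est-\`a-dire $b_2=\rho$ sur chaque fibre''. But this equivalence is the Lefschetz $(1,1)$ theorem and holds only in characteristic zero, whereas the proposition allows $\mathcal S$ to be an $\F_p$-scheme (the hypothesis is only that $\mathcal S\to\Spec\Z$ is not surjective). In characteristic $p$, no known theorem deduces $\rho=b_2$ from $H^2(X,\mathcal O_X)=0$: for a surface this vanishing forces $H^2_{\mathrm{cris}}$ to be pure of slope $1$, and concluding $\rho=b_2$ from there is a Tate-conjecture-type statement (compare Artin's conjecture for supersingular K3 surfaces). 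Without equal ranks, $\coker(\mathrm{sp})$ could a priori be infinite and the relation $|D|=m^2|D'|$ is vacuous. So your proof only covers the case where the generic point of $\mathcal S$ has characteristic zero; the paper's lifting argument -- which is where the hypothesis is actually designed to be used, and which you never invoke -- handles the general case and is shorter.

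Two further remarks. First, in the characteristic-zero case your lattice argument is essentially correct and is a genuine alternative: injectivity of specialization modulo torsion plus constancy of $b_2$ give equal ranks; the form $(\alpha,\beta)\mapsto\alpha\cdot\beta\cdot H^{d-2}$ is integral, preserved by specialization, and non-degenerate in all characteristics (Hodge index for divisor classes), so the index satisfies $m\leq\sqrt{|D|}$, and the composition of the two bounds does yield $N=N_1N_2$. Second, even in that case, your bound on the exponent $e$ of $\NS(\mathcal X_{\bar s'})_{tors}$ is incomplete at fibres of residue characteristic $p>0$: invariants of the type $n_{22}$ of section \ref{subsubsection:invariants} only control torsion prime to the residue characteristic (the paper's $\Z_\ell$-coefficients require $\ell$ invertible), so the $p$-primary torsion of $\NS$ is not seen by $\ell$-adic cohomology. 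It is indeed bounded in the family (via finiteness properties of $\Pic^{\tau}_{\mathcal X'/\mathcal S'}$ over the base), but this requires a separate argument; note that the paper's proof needs no bound on the torsion of $\NS$ at all.
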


\begin{proof}
Soit $\bar s'$ un point géométrique de $\mathcal S'$. On dispose des applications de spécialisation
$$r_{s'} : \Pic(\mathcal X\times_{\mathcal S}\mathcal S')\to \NS(\mathcal X_{\bar s'})$$
et
$$r^{\NS}_{s'} : \NS(\mathcal X_{\bar \eta'})\to\NS(\mathcal X_{\bar s'}),$$ o\`u
$r^{\NS}_{s'}$ est surjective. C'est ici que l'on utilise l'hypothèse d'annulation de $H^2(\mathcal X_{s'}, \mathcal O_{X_{s'}})$ via \cite[Corollaire 1 de la Proposition 3 p.11]{GrothendieckGF} qui garantit l'existence de relêvements de fibrés en droites, voir l'argument de \cite[Lemme 3.1]{CTR}. On a un diagramme commutatif
\[
\xymatrix{
\Pic(\mathcal X\times_{\mathcal S}\mathcal S')\ar[r]^{r_{\eta'}}\ar[dr]^{r_{s'}} & \NS(\mathcal X_{\bar \eta'})\ar[d]^{r^{\NS}_{s'}}\\ 
&  \NS(\mathcal X_{\bar s'}).
}
\]
 Il suit immédiatement que le conoyau de $r_{\eta'}$ se surjecte sur le conoyau de $r_{s'}$, ce qui conclut grâce au Lemme \ref{lemma:torsion-sp}.
\end{proof}

\subsection{Preuve du Théorème \ref{theorem:torsion}}\label{sectionexpoue}

\subsubsection{}

L'énoncé qui suit est une variante effective de \cite[Lemme 3.3]{CTR}, dont on va rappeler les arguments.

Soit $X$ une  vari\'et\'e projective, lisse, g\'eom\'etriquement int\`egre, d\'efinie sur un corps de nombres $k$. Soit $\mathcal X_U\to U$ un mod\`ele projectif et lisse de $X$ sur un ouvert $U\subset \Spec \Ok$. Supposons que  $H^2(\mathcal X_s,\mathcal O_{\mathcal X_s})=0$ pour tout point $s\in U$.

Suposons qu'il existe  un entier strictement positif $N$ tel que pour tout point $s\in  U$ et pour tout point g\'eom\'etrique $\bar s$ au-dessus de $s$, le conoyau de la flèche naturelle $\Pic (\mathcal X_U)\to \NS(\mathcal X_{\bar s})$ est tué par $N$.

\begin{proposition}\label{relevement}
Soit $\mathcal J$ le sch\'ema ab\'elien $Pic^0_{\mathcal X_U/U}$.  On fixe  $s_1, s_2\in U$ deux points ferm\'es de $U$ de caract\'eristiques r\'esiduelles diff\'erentes.  Il existe un entier $n'$ qui ne d\'epend que de $N$, $\#Cl(U)$ et des fibres $\mathcal J_{s_1}, \mathcal J_{s_2}$ tel que le noyau de la restriction 
$CH^2(\mathcal X_U)\to CH^2(X)$ est tu\'e par $n'$. En particulier,
 pour tout entier strictement positif $n$, l'image de l'application naturelle  $CH^2(\mathcal X_U)[nn']\to CH^2(X)[nn']$ contient le groupe $CH^2(X)[n]$.
\end{proposition}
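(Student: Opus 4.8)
The plan is to identify $\Ker[CH^2(\mathcal X_U)\to CH^2(X)]$ by means of the localisation sequence of Lemma \ref{sloc} and then to bound its exponent by splitting the group $\bigoplus_{s\in U^{(1)}}\Pic(\mathcal X_s)$ into a Néron--Severi part, controlled by Lemma \ref{lemme3.2}, and a $\Pic^0$ part, controlled by Somekawa's Corollary \ref{coroSomekawa}. First, the exact sequence (\ref{suiteloc}) shows that $\Ker[CH^2(\mathcal X_U)\to CH^2(X)]$ is isomorphic to the cokernel of the boundary map $\partial : H^1(X,\mathcal K_2)\to \bigoplus_{s\in U^{(1)}}\Pic(\mathcal X_s)$, so it suffices to bound the exponent of $\coker(\partial)$. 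For each closed point $s$ the Néron--Severi projection $q_s:\Pic(\mathcal X_s)\to \NS(\mathcal X_{\bar s})$ has kernel the classes algebraically equivalent to zero; since the residue field $\kappa(s)$ is finite its Brauer group vanishes, so this kernel is exactly the $\Pic^0$ part $\mathcal J_s(\kappa(s))=\Pic^0(\mathcal X_s)$. Write $q=\oplus_s q_s$.

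Second, I would carry out the following bounding argument, the effective form of the reasoning of \cite{CTR}. Fix $y\in\bigoplus_s\Pic(\mathcal X_s)$. The hypothesis of the present proposition on the cokernel of $\Pic(\mathcal X_U)\to\NS(\mathcal X_{\bar s})$ is exactly the one required by Lemma \ref{lemme3.2}, which therefore shows that $\coker(q\circ\partial)$ is killed by $N\cdot\#Cl(U)$; hence there is $z\in H^1(X,\mathcal K_2)$ such that $w:=(N\#Cl(U))\,y-\partial z$ lies in $\Ker q=\bigoplus_s\mathcal J_s(\kappa(s))$. The key compatibility, which I would extract from \cite{CTR} and \cite{S90}, is that the map $\gamma$ of Corollary \ref{coroSomekawa} applied to $\mathcal A=\mathcal J$ (of generic fibre $A=\Pic^0_{X/k}$) factors through $\partial$: it is the composite
$$\bigoplus_{L/k\,\text{fini}}A(L)\otimes L^*\lra H^1(X,\mathcal K_2)\stackrel{\partial}{\lra}\bigoplus_s\mathcal J_s(\kappa(s)),$$
the first arrow being cup-product followed by transfer, and the identification resting on the agreement of the $\mathcal K_2$-residues with the boundary maps (\ref{fbord}). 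By Corollary \ref{coroSomekawa} (together with the embedding $\coker(\gamma)\hookrightarrow T(A)_{G_k}$ and the Katz--Lang bounds of Corollary \ref{corKL}, applied prime by prime), $\coker(\gamma)$ is killed by an integer dividing $m:=\#\mathcal J_{s_1}(\kappa(s_1))\cdot\#\mathcal J_{s_2}(\kappa(s_2))$; thus $mw\in\im(\gamma)\subset\im(\partial)$, and so $mN\#Cl(U)\,y\in\im(\partial)$. Hence $\coker(\partial)$, and equivalently $\Ker[CH^2(\mathcal X_U)\to CH^2(X)]$, is killed by
$$n':=N\cdot\#Cl(U)\cdot\#\mathcal J_{s_1}(\kappa(s_1))\cdot\#\mathcal J_{s_2}(\kappa(s_2)),$$
which depends only on $N$, $\#Cl(U)$ and the two fibres $\mathcal J_{s_1},\mathcal J_{s_2}$.

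Finally, for the \emph{in particular} assertion, let $\alpha\in CH^2(X)[n]$. Using the surjectivity of $r:CH^2(\mathcal X_U)\to CH^2(X)$ coming from (\ref{suiteloc}), I choose a lift $\tilde\alpha$; then $n\tilde\alpha\in\Ker(r)$, so $nn'\tilde\alpha=n'(n\tilde\alpha)=0$, whence $\tilde\alpha\in CH^2(\mathcal X_U)[nn']$ and $r(\tilde\alpha)=\alpha$. This shows $CH^2(X)[n]\subset\im\bigl(CH^2(\mathcal X_U)[nn']\to CH^2(X)[nn']\bigr)$. I expect the main obstacle to be the verification of the compatibility invoked above — namely that the residues of Somekawa's symbols built from $\Pic^0$-classes and units land in, and actually induce, the boundary map $\partial$ into the $\Pic^0$ parts of the special fibres — since once Lemmas \ref{sloc}, \ref{lemme3.2} and Corollaries \ref{corKL}, \ref{coroSomekawa} are in place everything else is a formal diagram chase.
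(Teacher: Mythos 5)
Your proposal is correct and follows essentially the same route as the paper's own proof: the localisation sequence of Lemme \ref{sloc}, the N\'eron--Severi part controlled by Lemme \ref{lemme3.2}, the $\Pic^0$ part controlled by Corollaire \ref{coroSomekawa}, and the same resulting bound $n'=N\cdot\#Cl(U)\cdot\#\mathcal J_{s_1}(\kappa(s_1))\cdot\#\mathcal J_{s_2}(\kappa(s_2))$, with an identical treatment of the \emph{in particular} assertion. The compatibility you single out as the main obstacle --- that Somekawa-type symbols map into $H^1(X,\mathcal K_2)$ compatibly with the boundary maps (\ref{fbord}) --- is precisely what the paper also imports from \cite[Lemme 3.3]{CTR} via its commutative diagram (whose left vertical arrow plays the role of your factorisation of $\gamma$ through $\partial$), so your element-wise chase is just the unwound form of the paper's diagram chase.
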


\begin{proof}
%
%
%
%

Soit $K=\ker [CH^2(\mathcal X_U)\to CH^2(X)].$ On va prendre pour $n'$ l'exposant de $K$, dont on va montrer qu'il est bien fini et borné par une constante ne dépendant que de $N$, $U$ et des fibres $\mathcal J_{s_1}$ et $\mathcal J_{s_2}$.

De la suite de localisation (\ref{suiteloc}) on d\'eduit:  
$$K=\mathrm{coker} [H^1(X, \mathcal K_2)\to \bigoplus\limits_{s\in U^{(1)}} \mathrm{Pic}(\mathcal X_s)].$$ 

 Soit $J$ la fibre g\'en\'erique de $\mathcal J$. D'apr\`es le  corollaire \ref{coroSomekawa} appliqu\'e  au sch\'ema ab\'elien $\mathcal J$  on a une suite exacte:
 $$ \bigoplus\limits_{F/k\,\text{fini}}  J(F)\otimes_{\mathbb Z} F^* \stackrel{\gamma}{\to}  \bigoplus\limits_{s\in U^{(1)}}\mathcal J_s(\kappa(s))\stackrel{\delta}\to\coker(\gamma)\to 0,$$
o\`u l'exposant du groupe de droite  est born\'e par une constante qui ne d\'epend que de $ \mathcal J_{s_1}$ et $ \mathcal J_{s_2}$.

Remarquons que les groupes $\mathcal J_s(\kappa(s))$ ne sont pas nécessairement des sous-groupes des groupes $\Pic(\mathcal X_s)$ car on n'a pas supposé que $\mathcal X_s$ a un point rationnel.  Toutefois,  $\mathcal J_s(\overline{\kappa(s)})=\Pic^0(\mathcal X_{\bar s})$, la fl\`eche $\Pic(\mathcal X_s) \to \Pic(\mathcal X_{\bar s})$ est injective, et l'inclusion  $\mathcal J_s(\kappa(s))\cap \Pic(\mathcal X_{s})\subset \mathcal J_s(\overline{\kappa(s)})\cap \Pic(\mathcal X_{s})$ est une \'egalit\'e.

On dispose donc d'une suite exacte: 
$$0\to  \bigoplus\limits_{s\in U^{(1)}}\mathcal J_s(\kappa(s))\cap \Pic(\mathcal X_{s})\to \bigoplus\limits_{s\in U^{(1)}} \mathrm{Pic}(\mathcal X_s) \to \bigoplus\limits_{s\in U^{(1)}} \mathrm{NS}(\mathcal X_{\bar s}).$$

Soient $$A=\gamma^{-1}\Big(\bigoplus\limits_{s\in U^{(1)}}\mathcal J_s(\kappa(s))\cap \Pic(\mathcal X_{s})\Big)\mbox{ et }C=\delta\Big(\bigoplus\limits_{s\in U^{(1)}}\mathcal J_s(\kappa(s))\cap \Pic(\mathcal X_{s})\Big),$$
de sorte que $C$ est un sous-groupe du conoyau de $\gamma$.

Comme dans \cite[Lemme 3.3]{CTR}, on a le diagramme commutatif suivant: 

$$
\xymatrix{
& 0\ar[d] & & \\
 A \ar[r]\ar[d] & \bigoplus\limits_{s\in U^{(1)}}\mathcal J_s(\kappa(s))\cap \Pic(\mathcal X_{s})\ar[d]\ar[r] & C\ar[r]\ar[d] & 0 \\
H^1( X,\mathcal K_2)\ar[r]\ar@{->>}[rd] &  \bigoplus\limits_{s\in U^{(1)}} \mathrm{Pic}(\mathcal X_s) \ar[d]\ar[r] & K\ar[r] &0\\
& \bigoplus\limits_{s\in U^{(1)}} \mathrm{NS}(\mathcal X_{\bar s})& & \\       
}
$$     

Une chasse aux diagramme montre que l'exposant de $K$ est borné par le produit de l'exposant de $C$ et de l'exposant du conoyau de la flèche diagonale. Le lemme  \ref{lemme3.2} pour la fl\`eche diagonale, et le  corollaire \ref{coroSomekawa} permettent de conclure.
\end{proof}

\subsubsection{} On peut maintenant donner la preuve du Théorème \ref{theorem:torsion}. On fixe un sch\'ema $\mathcal S$ intègre, s\'epar\'e, de type fini sur $\mathbb Z$ et un morphisme $\pi : \mathcal X\lra \mathcal S$
un morphisme projectif lisse \`a fibres g\'eom\'etriquement int\`egres. Supposons  que  $H^2(\mathcal X_s,\mathcal O_{\mathcal X_s})=0$ pour tout point $s\in \mathcal S$.

Soit $k$ un corps de nombres, $\mathcal O_k$ son anneau des entiers, et soit $U\subset \Spec\Ok$ un ouvert non vide. Soit 
$s : \Spec k\lra \mathcal S$ l'image du point g\'en\'erique d'un $U$-point de $\mathcal S$.  On cherche un entier $N$ ne d\'ependant que de $U$ et de $\pi$ tel que
$$|CH^2(\mathcal X_s)_{tors}|\leq N.$$

Soit $\ell$ un nombre premier. Quitte à remplacer $U$ par $U\times_{\Spec\Z}\Spec\Z[1/\ell]$ et $\mathcal S$ par $\mathcal S\times_{\Spec\Z}\Spec\Z[1/\ell]$, on peut supposer que $\ell$ est inversible sur $U$.

\begin{proposition}\label{proposition:relev-tors}
Il existe un entier $N'$ ne d\'ependant que de $U$ et de $\pi$ tel que, pour tout entier $n$, l'image de l'application naturelle  $CH^2(\mathcal X_U)[nN']\to CH^2(\mathcal X_s)[nN']$ contient le groupe $CH^2(\mathcal X_s)[n].$
\end{proposition}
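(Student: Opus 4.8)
The plan is to reduce to a situation where the effective Colliot-Th\'el\`ene--Raskind bound of Proposition \ref{relevement} applies with \emph{all} of its inputs controlled only in terms of $U$ and $\pi$. As the paragraph preceding the statement warns, the difficulty is that the surjectivity hypothesis on $\Pic\to\NS$ required by Proposition \ref{relevement} fails over $U$ itself as soon as the monodromy acts nontrivially on $\NS(\mathcal{X}_{\bar\eta})$: the image of $\Pic(\mathcal{X}_U)$ in $\NS(\mathcal{X}_{\bar s})$ lands in the monodromy invariants, and the complementary part can have positive rank. So first I would invoke Proposition \ref{proposition:reduc-etale} — legitimate since $\ell$ has been made invertible on $\mathcal{S}$ — to get a finite \'etale connected cover $p:\mathcal{S}'\to\mathcal{S}$ and an integer $N_0$, and pull it back along the given $U$-point $\phi:U\to\mathcal{S}$ to form $V:=U\times_{\mathcal{S}}\mathcal{S}'\to U$, finite \'etale of degree $e=\deg(p)$. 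Replacing $V$ by a connected component $V_0$, this is an open subscheme of $\Spec\mathcal{O}_{k'}$ for a number field $k'$ with $[k':k]\le e$, with generic point $s'$ and $\mathcal{X}_{s'}=X\otimes_k k'=:X'$. Since geometric fibres are unchanged by \'etale base change, restricting line bundles from $\mathcal{X}\times_{\mathcal{S}}\mathcal{S}'$ shows that the cokernel of $\Pic(\mathcal{X}_{V_0})\to\NS(\mathcal{X}_{\bar w})$ is killed by $N_0$ for every geometric point $\bar w$ of $V_0$, so the hypothesis of Proposition \ref{relevement} holds over $V_0$ with the uniform integer $N_0$.

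The key point, and the main obstacle, is to control $\#Cl(V_0)$, which a priori can be arbitrarily large for covers of bounded degree. Here I would use that $V_0\to U$ is \emph{\'etale}: the extension $k'/k$ is then unramified at every prime of $U$, hence ramified only within the fixed finite set $S_0:=\Spec\mathcal{O}_k\setminus U$, together with the (fixed) primes ramified in $k/\mathbb{Q}$. As $[k':\mathbb{Q}]$ is bounded and the ramification is confined to a fixed finite set of primes, the discriminant of $k'$ is bounded, so by Hermite--Minkowski $k'$ ranges over a \emph{finite} list of fields depending only on $U$ and $\pi$; consequently $\#Cl(V_0)\le\#Cl(\mathcal{O}_{k'})$ is bounded by a constant $C(U,\pi)$. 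The remaining inputs of Proposition \ref{relevement} are already uniform: fixing two closed points $s_1,s_2\in U$ of distinct residue characteristics and choosing $w_1,w_2\in V_0$ above them, the residue fields satisfy $\#\kappa(w_i)\le(\#\kappa(s_i))^{e}$, so the Weil bound $\#\mathcal{J}'_{w_i}(\kappa(w_i))\le(\sqrt{\#\kappa(w_i)}+1)^{2g}$ — with $g=\dim\Pic^0_{\mathcal{X}_\bullet}$ constant in the family — bounds the fibres of $\mathcal{J}'=\Pic^0_{\mathcal{X}_{V_0}/V_0}$ uniformly. Applying Proposition \ref{relevement} to $\mathcal{X}_{V_0}\to V_0$ then produces a bound $M=M(U,\pi)$ on the exponent of $\ker[CH^2(\mathcal{X}_{V_0})\to CH^2(X')]$.

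Finally I would transfer this bound back to $U$ by a restriction--corestriction argument. The morphism $\mathcal{X}_{V_0}\to\mathcal{X}_U$ is finite \'etale of degree $e_0=[k':k]$, so flat pullback and proper pushforward on Chow groups satisfy $\mathrm{push}\circ\mathrm{pull}=e_0$, compatibly with restriction to the generic fibre. Given $\alpha\in\ker[CH^2(\mathcal{X}_U)\to CH^2(X)]$, its pullback lies in $\ker[CH^2(\mathcal{X}_{V_0})\to CH^2(X')]$, hence is annihilated by $M$; pushing forward gives $e_0 M\alpha=0$, so $\ker[CH^2(\mathcal{X}_U)\to CH^2(X)]$ is killed by $N':=eM$, an integer depending only on $U$ and $\pi$. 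The asserted ``in particular'' is then formal: the restriction $CH^2(\mathcal{X}_U)\to CH^2(\mathcal{X}_s)$ is surjective by the localization sequence (\ref{suiteloc}), so any $\alpha\in CH^2(\mathcal{X}_s)[n]$ lifts to $\beta\in CH^2(\mathcal{X}_U)$; then $n\beta$ lies in the kernel, whence $nN'\beta=0$, i.e. $\beta\in CH^2(\mathcal{X}_U)[nN']$ maps to $\alpha$. I expect the genuinely delicate points to be the Hermite--Minkowski bound on $\#Cl(V_0)$ and the compatibility of the norm maps on Chow groups with restriction to the generic fibre; the remaining work is bookkeeping of the constants.
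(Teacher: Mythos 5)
Your argument is correct and follows the paper's proof almost step for step: Proposition \ref{proposition:reduc-etale} produces the finite étale cover $p:\mathcal S'\to\mathcal S$; one passes to a connected component $V_0$ of $U\times_{\mathcal S}\mathcal S'$, applies Proposition \ref{relevement} there, uses Serre's finiteness lemma (Hermite--Minkowski) to see that $V_0$ ranges over a finite set depending only on $U$ and $\deg p$, and descends back to $U$ by restriction--corestriction (the paper cites Gillet, Theorem 7.22, for the norm map here, as you should too). The one genuine divergence is how the special fibres of the abelian scheme $\mathcal J'=\Pic^0_{\mathcal X_{V_0}/V_0}$ entering Proposition \ref{relevement} are made uniform. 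The paper invokes Lemma \ref{places} applied to $\mathcal S'$: the $V_0$-point of $\mathcal S'$ must hit two members of a finite list of closed points of $\mathcal S'$ with distinct residue characteristics, so the relevant fibres range over a finite set of abelian varieties over finite fields. You instead fix $s_1,s_2\in U$ of distinct residue characteristics, lift them to $w_1,w_2\in V_0$ (legitimate: $V_0\to U$ is finite étale with open and closed image, hence surjective onto the connected $U$), and apply the Weil bound $\#\mathcal J'_{w_i}(\kappa(w_i))\le (1+\sqrt{\#\kappa(w_i)})^{2g}$ together with $[\kappa(w_i):\kappa(s_i)]\le\deg p$; this works because the Weil bound does not care which abelian variety occurs, and $g$ is indeed constant in the family, since under $H^2(\mathcal X_s,\mathcal O_{\mathcal X_s})=0$ the relative $\Pic^0$ is a smooth group scheme, of locally constant relative dimension over the connected base. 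So your variant dispenses with Lemma \ref{places} at this step, which is a mild but genuine simplification. One bookkeeping slip at the end: the degree $e_0=[k':k]$ of the component $V_0$ satisfies $e_0\le e=\deg p$ but need not divide $e$, so from $e_0M\alpha=0$ you may not conclude $eM\alpha=0$; take $N'=e!\,M$ (or $\mathrm{lcm}(1,\dots,e)\cdot M$) instead of $eM$. This is harmless, since $e$ and $M$ depend only on $U$ and $\pi$.
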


\begin{proof}
La Proposition \ref{proposition:reduc-etale} montre l'existence d'un morphisme 
$$p : \mathcal S'\to \mathcal S$$
fini étale et d'un entier $N'_1$ ne dépendant que de $\pi$ tel que, pour tout point géométrique $\bar m'$ de $\mathcal S'$, le conoyau de l'application de spécialisation 
$$\Pic(\mathcal X\times_{\mathcal S}\mathcal S')\to \NS(\mathcal X_{\bar m'})$$
est tué par $N'_1$.

Soit $V$ une composante connexe de $U\times_{\mathcal S} \mathcal S'$. On 
a un diagramme commutatif :  

$$
\xymatrix{
0\ar[r] & K\ar[r]\ar[d]  & CH^2(\mathcal X_U)\ar[r]\ar[d] & CH^2(X)\ar[r]\ar[d] & 0\\
0\ar[r] & K_L\ar[r] & CH^2(\mathcal X_{V})\ar[r] & CH^2(X_L)\ar[r] & 0\\
}
$$
o\`u $L$ est le corps des fonctions de $V$.
Le degr\'e $d_L=[L:k]$ est born\'e par le degr\'e de $p$, 
et annule le groupe $\Ker[CH^2(\mathcal X_U)\to CH^2(\mathcal X_{V})]$ (voir \cite[Theorem 7.22]{Gillet} pour la construction de la norme dans ce cas). 
Une chasse au diagramme montre que l'exposant du groupe $K$ est born\'e par le produit de $d_L$ et de l'exposant du groupe $K_L$.

L'hypothèse de la Proposition \ref{relevement} est satisfaite sur $\mathcal S'$ par construction. L'exposant de $K_L$ est donc borné par une constante $N'_2$ qui ne dépend que de $V$, et $\pi$. En effet, le Lemme \ref{places} garantit que, avec les notations de la Proposition \ref{relevement}, on peut choisir les fibres $\mathcal J_{s'_1}$ et $\mathcal J_{s'_2}$ d'une manière qui ne dépend que de $\mathcal S'$.

Le corps $k$ n'admet qu'un nombre fini d'extensions finies non ramifiées de degré borné sur $U$, voir par exemple \cite[II.6, Lemme 6]{Se}. En particulier, le schéma $V$ ci-dessus ne peut prendre qu'un nombre fini de valeurs, ce qui conclut.
\end{proof}

\begin{proposition}\label{proposition:fin-U}
Soit $n$ un entier strictement positif, inversible sur $U$. Il existe un entier $N''$ ne dépendant que de $n$, $U$ et $\pi$ tel que 
$$|CH^2(\mathcal X_U)[n]|\leq N''.$$
\end{proposition}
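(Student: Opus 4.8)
The plan is to bound $|CH^2(\mathcal X_U)[n]|$ by the cardinality of a finite étale cohomology group of $\mathcal X_U$, and then to bound the latter uniformly over the chosen $U$-point $g\colon U\to\mathcal S$ defining $\mathcal X_U=\mathcal X\times_{\mathcal S}U$. The whole point is that, $n$ being \emph{fixed}, the reduction mod $n$ of the local systems attached to $\pi$ has monodromy in a fixed finite group, so that trivialising it only requires an \emph{unramified} cover of the fixed base $U$ of bounded degree; this is what circumvents the difficulty, flagged at the start of this section, that ramified covers of bounded degree may have arbitrarily large fundamental group.

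First I would relate the torsion to cohomology. Since $n$ is invertible on $U$, the scheme $\mathcal X_U$ is regular with $n$ invertible on it, so the Gersten--Bloch--Ogus machinery applies. Writing $\mathcal H^q$ for the Zariski sheaf associated to $V\mapsto H^q_{\acute{e}t}(V,\mu_n^{\otimes 2})$, the Bloch--Ogus spectral sequence $E_2^{p,q}=H^p_{\mathrm{Zar}}(\mathcal X_U,\mathcal H^q)\Rightarrow H^{p+q}_{\acute{e}t}(\mathcal X_U,\mu_n^{\otimes 2})$ satisfies $E_2^{p,q}=0$ for $p>q$, whence an injection $H^1_{\mathrm{Zar}}(\mathcal X_U,\mathcal H^2)\hookrightarrow H^3_{\acute{e}t}(\mathcal X_U,\mu_n^{\otimes 2})$. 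On the other hand, Bloch's formula $CH^2(\mathcal X_U)=H^2_{\mathrm{Zar}}(\mathcal X_U,\mathcal K_2)$ together with the Merkurjev--Suslin isomorphism $\mathcal K_2/n\simeq\mathcal H^2$ gives, exactly as in the finite-level version of Proposition \ref{prch2}(1) (compare \cite{CT-cycles} and \cite{CTSS}), an exact sequence
$$0\to H^1(\mathcal X_U,\mathcal K_2)/n\to H^1_{\mathrm{Zar}}(\mathcal X_U,\mathcal H^2)\to CH^2(\mathcal X_U)[n]\to 0.$$
Combining the two, $CH^2(\mathcal X_U)[n]$ is a quotient of a subgroup of $H^3_{\acute{e}t}(\mathcal X_U,\mu_n^{\otimes 2})$, so it suffices to bound this last group.

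Next I would bound $H^3_{\acute{e}t}(\mathcal X_U,\mu_n^{\otimes 2})$ by descending to $U$. Let $f\colon\mathcal X_U\to U$ be the structural morphism; it is smooth and proper with $n$ invertible on $U$, so the $R^qf_*\mu_n^{\otimes 2}$ are lisse sheaves of $\mathbb Z/n$-modules on $U$ whose cardinality is bounded in terms of $\pi$ and $n$ alone, and the Leray spectral sequence reduces the problem to bounding $H^p_{\acute{e}t}(U,R^qf_*\mu_n^{\otimes 2})$ for $p+q=3$. By smooth and proper base change these sheaves are the pullbacks along $g$ of the fixed lisse sheaves $R^q\pi_*\mu_n^{\otimes 2}$ on $\mathcal S[1/n]$, whose monodromy is a representation of $\pi_1(\mathcal S[1/n])$ with image in the automorphism group of the stalk, a fixed finite group of order $B=B(\pi,n)$. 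Choosing a connected finite étale cover $\mathcal S''\to\mathcal S[1/n]$ of degree dividing $B$ that trivialises all of them and pulling back along $g$, the sheaves $R^qf_*\mu_n^{\otimes 2}$ become constant on $U''=U\times_{\mathcal S}\mathcal S''$, a finite étale cover of $U$ of degree at most $B$.

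The main obstacle, and the crux of the argument, is to control this descent uniformly in $g$. Although $U''$ varies with the $U$-point, it is an unramified cover of the \emph{fixed} arithmetic curve $U$ of degree at most $B$; by the theorem of Hermite--Minkowski (see \cite[II.6, Lemme 6]{Se}) there are only finitely many such covers, so $U''$ ranges over a finite set depending only on $U$, $\pi$ and $n$. For each of them I would pass to a Galois closure $\tilde U\to U$ with group $\Gamma$ of bounded order and use the Hochschild--Serre spectral sequence $H^a(\Gamma,H^b_{\acute{e}t}(\tilde U,\mu_n^{\otimes 2}))\Rightarrow H^{a+b}_{\acute{e}t}(U,R^qf_*\mu_n^{\otimes 2})$, which reduces matters to the finitely many finite groups $H^b_{\acute{e}t}(\tilde U,\mu_n^{\otimes 2})$ --- finite by the standard finiteness of étale cohomology of rings of $S$-integers with coefficients invertible on them --- and to group cohomology of the bounded group $\Gamma$. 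All these quantities are bounded in terms of $U$, $\pi$ and $n$ only, giving the desired bound $N''$. Had $n$ been allowed to grow, the monodromy group $B$ and hence the degree of $U''$ would be unbounded, and this final finiteness would break down; fixing $n$ is exactly what makes it work.
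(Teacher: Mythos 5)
Your proposal is correct and follows essentially the same route as the paper: realise $CH^2(\mathcal X_U)[n]$ as a subquotient of $H^3_{\acute{e}t}(\mathcal X_U,\mu_n^{\otimes 2})$, apply the Leray spectral sequence for $\mathcal X_U\to U$, trivialise the lisse sheaves $R^q\pi_{U*}\mu_n^{\otimes 2}$ on a finite étale cover of $U$ whose degree is bounded because its Galois group embeds in the automorphism group of the fixed finite stalk, invoke Hermite--Minkowski (\cite[II.6, Lemme 6]{Se}) to reduce to finitely many such covers, and conclude with Hochschild--Serre. The only blemish is notational: in your Hochschild--Serre spectral sequence the coefficients on $\tilde U$ should be the constant sheaf $\mathcal F|_{\tilde U}$ (with value group $H^q_{\acute{e}t}(\mathcal X_{\bar s},\mu_n^{\otimes 2})$), not $\mu_n^{\otimes 2}$, exactly as in the paper's $H^p(G,H^q(U',\mathcal F_{U'}))\Rightarrow H^{p+q}(U,\mathcal F)$.
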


\begin{proof}
Comme dans la proposition \ref{prch2}(1), le groupe $CH^2(\mathcal X_U)[n]$ est un sous-quotient du groupe (fini) $H^3_{\acute{e}t}(\mathcal X_U, \mu_{n}^{\otimes 2})$ (voir \cite[(3.11), Th\'eor\`eme 6.2 ]{CT-cycles}).

On va borner la taille du groupe $H^3_{\acute{e}t}(\mathcal X_U, \mu_{n}^{\otimes 2}).$ On utilise la suite spectrale de Leray pour $\pi_U:\mathcal X_U\to U$:
$$H^p_{\acute{e}t}(U, R^q\pi_{U*}\mu_{n}^{\otimes 2})\Rightarrow H^{p+q}_{\acute{e}t}(\mathcal X_U, \mu_{n}^{\otimes 2}).$$

Il suffit donc pour conclure de borner la taille  des groupes  $H^i_{\acute{e}t}(U, R^j\pi_{U*}\mu_{n}^{\otimes 2})$ o\`u  $i+j=3$. Les fibres g\'eom\'etriques du faisceau  $\mathcal F=R^j\pi_{U*}\mu_{n}^{\otimes 2}$ sont isomorphes \`a $H^j_{\acute{e}t}(\mathcal X_{\bar s},\mu_{n}^{\otimes 2})$, elles sont donc constantes dans la famille $\pi:\mathcal X\to \mathcal S$.

Puisque le morphisme $\pi$ est lisse, le faisceau $\mathcal F$ est localement constant. Il existe donc un morphisme fini \'etale $\tau: U'\to U$ tel que le faisceau $\mathcal F_{U'}$ est constant, de fibre $H^j_{\acute{e}t}(\mathcal X_{\bar s},\mu_{n}^{\otimes 2}).$ Soit $F$ le corps des fonctions de $U'$: c'est une extension finie de $k$. 

Notons que le groupe de Galois $G$ du rev\^etement $\tau$ est un sous-groupe du groupe des automorphismes de $H^j_{\acute{e}t}(\mathcal X_{\bar s},\mu_{n}^{\otimes 2})$, on n'a donc qu'un nombre fini de tels groupes, et le cardinal $|G|$ est born\'e. Ainsi l'extension $F/k$ est de degr\'e born\'e. Comme cette extension est non-ramifi\'ee au-dessus de l'ouvert fix\'e $U$ de $\mathcal O_k$ par construction, on n'a qu'un nombre fini de possibilit\'es pour le corps $F$. 

Ainsi seul un nombre fini de groupes de cohomologie $H^i_{\acute{e}t}(U, R^j\pi_{U*}\mu_{n}^{\otimes 2})$ peuvent apparaître comme on le voit en considérant la suite spectrale  $H^p(G, H^q(U', \mathcal F_{U'}))\Rightarrow H^{p+q}(U, \mathcal F)$ o\`u l'on n'a qu'un nombre fini de possibilit\'es pour $G$, $U'$, et les groupes finis $H^q(U', \mathcal F_{U'})$ d'apr\`es ce qui pr\'ec\`ede. On peut donc borner la taille des groupes $H^i_{\acute{e}t}(U, R^j\pi_{U*}\mu_{n}^{\otimes 2})$ par une constante qui ne d\'epend que de $\pi$ et de $U$, d'o\`u le r\'esultat.
\end{proof}

\begin{proof}[Démonstration du Théorème \ref{theorem:torsion}]
Le Théorème \ref{theorem:exponent2} montre qu'il existe un entier $N'''$ ne dépendant que du degré de $k$ sur $\Q$ et de $\pi$ tel que l'exposant du groupe $CH^2(\mathcal X_s)_{tors}$ est borné par $N'''$. La Proposition \ref{proposition:relev-tors} dont on garde les notations montre que l'application naturelle $CH^2(\mathcal X_U)[N'N''']\to CH^2(\mathcal X_s)_{tors}$ est surjective. C'est encore le cas si l'on remplace $U$ par $U\times_{\Spec\Z}\Spec\Z[1/N'N''']$.

Appliquer la Proposition \ref{proposition:fin-U} à $U\times_{\Spec\Z}\Spec\Z[1/N'N''']$ et $n=N'N'''$ conclut la preuve.

\end{proof}

%
%
%
%
%
%
%


\end{document}